\newcommand{\subfl}[1]{\subfloat[#1]}
\DeclareMathOperator{\supp}{supp}
\DeclareMathOperator{\diam}{diam}
\newcommand{\Hull}{\mathrm{Hull}}
\definecolor{amcol}{rgb}{0.8,0,0}
\definecolor{dhcol}{rgb}{0,0.5,0}
\newcommand{\ellref}{\ell_\mathrm{ref}}
\newcommand{\rf}[1]{(\ref{#1})}
\newcommand{\ri}{{\mathrm{i}}}
\newcommand{\re}{{\mathrm{e}}}
\newcommand{\rd}{\mathrm{d}}
\newcommand{\R}{\mathbb{R}}
\newcommand{\N}{\mathbb{N}}
\newcommand{\C}{\mathbb{C}}
\newcommand{\cA}{\mathcal{A}}
\newcommand{\cH}{\mathcal{H}}
\newcommand{\bs}[1]{\mathbf{#1}}
\newcommand{\bn}{\mathbf{n}}
\newcommand{\bx}{\mathbf{x}}
\newcommand{\by}{\mathbf{y}}
\newcommand{\pdone}[2]{\dfrac{\partial {#1}}{\partial {#2}}}
\newtheorem{thm}{Theorem}[section]
\newtheorem{lem}[thm]{Lemma}
\newtheorem{prop}[thm]{Proposition}
\newtheorem{cor}[thm]{Corollary}
\newtheorem{rem}[thm]{Remark}
\newtheorem{ass}[thm]{Assumption}
\newtheorem{example}[thm]{Example} 
\newtheorem{hyp}[thm]{Hypothesis}
\newcommand{\tH}{\widetilde{H}}
\newcommand{\dimH}{{\rm dim_H}}
\newcommand{\Rn}{{(\R^n)}}
\renewcommand{\bs}[1]{\boldsymbol{#1}}
\renewcommand{\bn}{{\bs{n}}}
\newcommand{\bm}{{\bs{m}}}
\newcommand{\bmp}{{\bs{m}'}}
\newcommand{\bnp}{{\bs{n}'}}
\renewcommand{\bx}{x}%
\renewcommand{\by}{y}%
\newcommand{\bbx}{\Psi}
\newcommand{\bby}{\widetilde{\Psi}}
\renewcommand{\tH}{\widetilde{H}{}}
\newcommand{\GG}{(\Gamma)}
\newcommand{\tr}{\mathrm{tr}_\Gamma}
\newcommand{\IH}{\mathbb{H}}
\newcommand{\IS}{\mathbb{S}}
\newcommand{\IL}{\mathbb{L}}
\newcommand{\IV}{\mathbb{V}}
\newcommand{\IC}{\mathbb{C}}
\newcommand{\IY}{\mathbb{Y}}
\newcommand{\IA}{\mathbb{A}}
\newcommand{\footremember}[2]{%
	\footnote{#2}
	\newcounter{#1}
	\setcounter{#1}{\value{footnote}}%
}
\newcommand{\footrecall}[1]{%
	\footnotemark[\value{#1}]%
} 
\definecolor{purple0}{rgb}{0.4,0,0.5}
\definecolor{orange}{rgb}{1,0.4,0}
\newcommand{\vb}{{\vec b}}
\title{Integral equation methods for acoustic scattering by fractals}
\author{
	A.\ M.\ Caetano\footremember{1}{Center for R\&D in Mathematics and Applications, Departamento de Matem\'atica, Universidade de Aveiro, Aveiro, Portugal},
	S.\ N.\ Chandler-Wilde\footremember{2}{Department of Mathematics and Statistics, University of Reading, Reading, United Kingdom},
	X.\ Claeys\footremember{3}{Laboratoire Jacques-Louis Lions, Sorbonne Universit\'e, Paris, France},
	\and
	A.\ Gibbs\footremember{4}{Department of Mathematics, University College London, London, United Kingdom}, %
	D.\ P.\ Hewett\footrecall{4}\  
	\ and\ A.\ Moiola\footremember{5}{Dipartimento di Matematica ``F. Casorati'', Universit\`a degli studi di Pavia, Pavia, Italy}
}
\date{}
\newcommand{\rev}[1]{#1}
\begin{document}

	\maketitle

\begin{abstract}
We study sound-soft time-harmonic acoustic scattering by general scatterers, including fractal
scatterers, in 2D and 3D space.
For an arbitrary compact scatterer $\Gamma$ we reformulate the Dirichlet boundary value problem for the Helmholtz equation as a first kind integral equation (IE) on $\Gamma$ involving the Newton potential. 
The IE is well-posed, except possibly at a countable set of frequencies, and reduces to existing single-layer boundary IEs when $\Gamma$ is the boundary of a bounded Lipschitz open set, a screen, or a multi-screen.
When $\Gamma$ is uniformly of $d$-dimensional Hausdorff dimension in a sense we make precise (a $d$-set), the operator in our equation is an integral operator on $\Gamma$ with respect to $d$-dimensional Hausdorff measure, with kernel the Helmholtz fundamental solution, and we propose a piecewise-constant Galerkin discretization of the IE, which converges in the limit of vanishing mesh width.
When $\Gamma$ is the fractal attractor of an iterated function system of contracting similarities 
we prove convergence rates under assumptions on $\Gamma$ and the IE solution,
and describe a fully discrete implementation using recently proposed quadrature rules for singular integrals on fractals.
We present numerical results for a range of examples and make our software 
available as a Julia code.
\end{abstract}

\maketitle

\section{Introduction} \label{sec:intro}

This paper, prepared in large part during a recent Isaac Newton Institute programme on multiple wave scattering, is concerned with %
the classical problem of scattering of time-harmonic acoustic waves in $\R^n$, $n=2$ or $3$, by a scatterer $\Gamma$ \rev{(assumed to be a compact subset of $\R^n$)} that may have multiple components or other complicated geometrical features.
\rev{We consider the sound-soft case, where the total field $u^t$ vanishes on $\Gamma$, %
and satisfies the Helmholtz equation $(\Delta+k^2)u^t=0$ for some wavenumber $k>0$ in the open set $\Omega:=\R^n\setminus\Gamma$.} 
\rev{Our focus is on integral equation (IE) formulations of this scattering problem and their numerical solution}\footnote{We note that our methods and results apply, with obvious modifications, to the analogous (yet simpler) problem in potential theory, in which the Helmholtz equation is replaced by the Laplace equation.}.
\rev{
Our particular interest is in scattering by fractals, which provide a model for the multiscale roughness of many natural and man-made scatterers. 

In a sequence of recent papers \cite{ScreenPaper,BEMfract,HausdorffBEM} we studied %
scattering by fractal planar screens (i.e.\ scattering in $\R^n$ by fractal subsets of $\R^{n-1}$). 
In the current paper we show how 
the results of \cite{ScreenPaper,BEMfract,HausdorffBEM} 
can be generalised to scattering by fractal subsets of $\R^n$ that are {\em not contained in a hyperplane}.} 
\rev{This is a significant novelty compared to previous contributions, greatly extending the class of scatterers to which our results apply. }  
\rev{This generalisation complicates the analysis, because when the scatterer is a planar screen as in \cite{ScreenPaper,BEMfract,HausdorffBEM}, irrespective of its smoothness, the scattering problem can be written as a coercive (sign-definite) variational problem by the results in \cite{CoercScreen2,Ha-Du:90,Ha-Du:92}, while in the general case considered here the integral operators involved are compact perturbations of coercive ones %
and we have to resort to Fredholm theory. This generalisation also forces us to use a novel Sobolev space setting for the integral equation.  
On the other hand, %
for the case where the scatterer is a planar screen, the analysis carried out here is 
in certain respects 
a 
simplification of that in \cite{ScreenPaper,BEMfract,HausdorffBEM}, 
as %
the trace operator from $\R^n$ to a hyperplane (see Lemma~\ref{ConnectionPlanarScreens}) 
does not play a role}.

The contributions of the paper are several. Firstly, we write down in \rev{Theorem~\ref{thm:equivalence}} %
a novel IE formulation of \rev{the} problem that applies for any compact $\Gamma\subset \R^n$. 
The scattered field is sought as an acoustic Newton potential $u=\mathcal{A}\phi$, with an unknown density \rev{$\phi$ on $\Gamma$ satisfying a}
first-kind IE
\begin{equation} \label{eqn:IE}
A\phi=g,
\end{equation}
\rev{for some data $g$ depending on the incident wave.}
The operator $A$, 
defined in \eqref{eq:ADef} \rev{below},  
is a generalisation of the classical single-layer boundary integral operator, in cases where this is well-defined. 
Importantly, \rev{we prove in Lemma~\ref{lem:coer} that} $A$ is a compact perturbation of a coercive operator, so that all Galerkin solution methods are convergent, \rev{provided $A$ is also injective}.
\rev{Using this result}, \rev{in Theorem~\ref{thm:equivalence} we}
provide an IE-based proof of well-posedness \rev{for scattering by a} general compact $\Gamma$, generalising existing IE-based proofs for cases where $\Omega$ is Lipschitz or smoother (e.g., %
\cite[Thm~9.11]{McLean}).

We focus mostly on the particular case where $\Gamma$ is a $d$-set (definition \eqref{eq:dset} below), which means (roughly speaking) that $\Gamma$ is uniformly of Hausdorff dimension $d$, for some integer or fractional $d\in (0,n]$.
If %
$\Gamma$ is a $d$-set \rev{and} %
$d\leq n-2$%
\rev{, then, as we explain in Remark~\ref{rem:cap},} the scatterer is invisible to incident waves.
To focus on cases where $u\neq 0$ we restrict our study to the range $n-2<d\leq n$.

\begin{figure}%
\centering
\subfl{Closure of a \\ bounded Lipschitz \\ open set}{\includegraphics[width = 32mm]{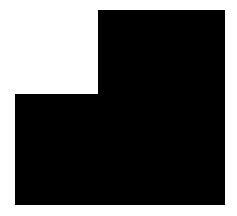}}
\hspace{2mm}
\subfl{Boundary of a \\ bounded Lipschitz \\ open set}{\includegraphics[width = 32mm]{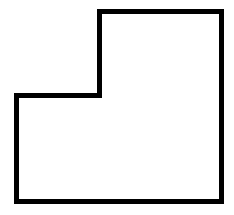}}
\hspace{2mm}
\subfl{Line segment \\ screen \hspace{5mm}}{\includegraphics[width = 32mm]{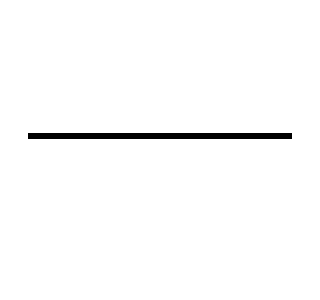}}
\subfl{Multi-screen}{\includegraphics[width = 32mm]{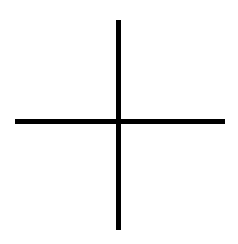}}\\
\subfl{Cantor set screen}{\includegraphics[width = 40mm]{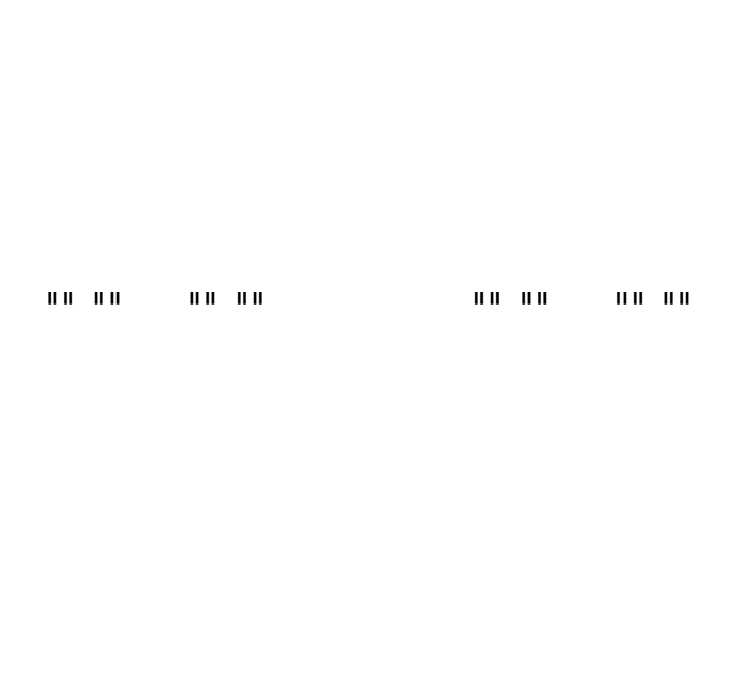}}
\hspace{4mm}
\subfl{Koch curve}{\includegraphics[width = 40mm]{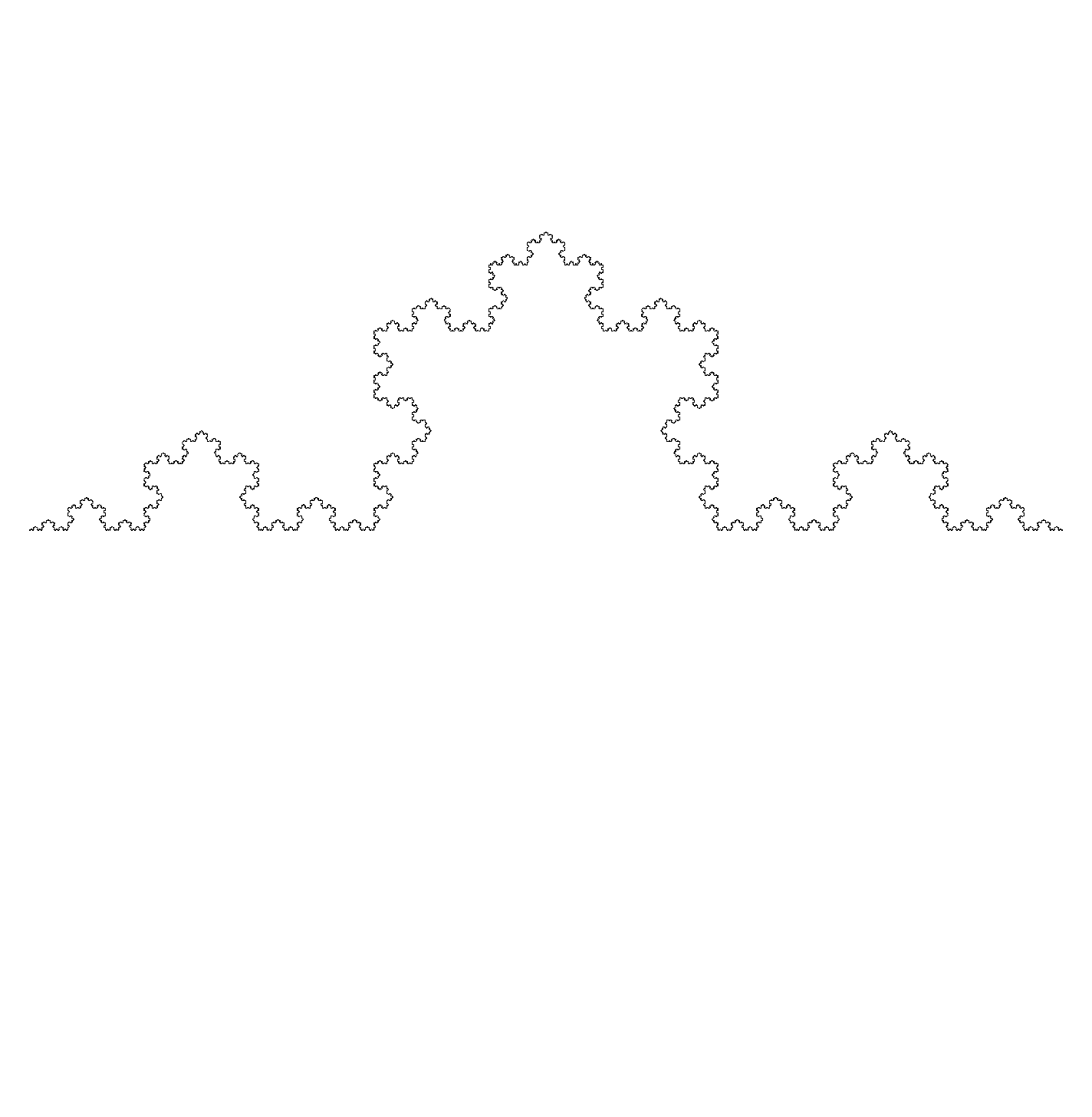}}
\hspace{4mm}
\subfl{Koch snowflake}{\includegraphics[width = 40mm]{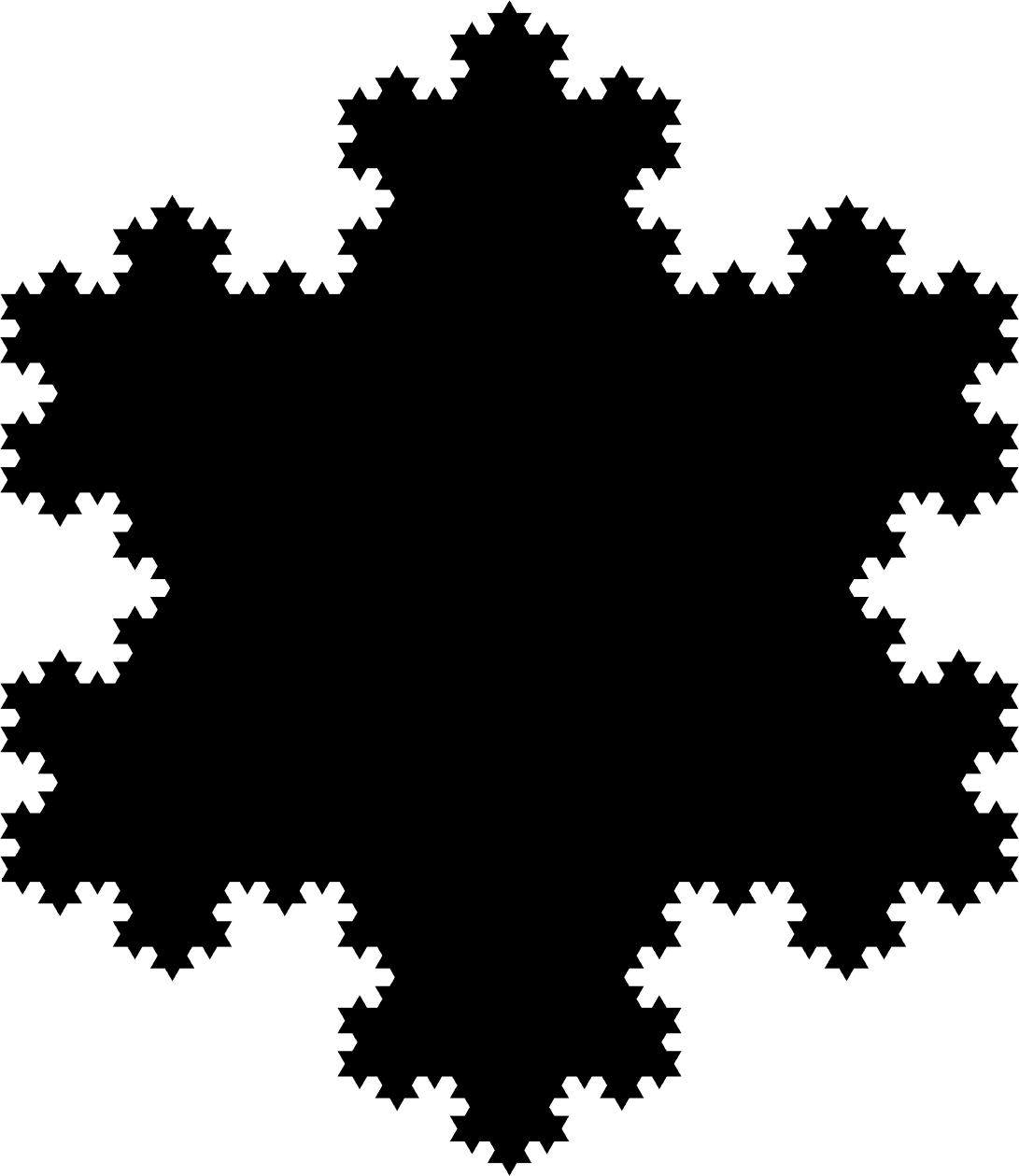}}
\caption{Examples of $d$-sets in two-dimensional space ($n=2$), with: a) $d=2$; b) $d=1$; c) $d=1$; d) $d=1$; e) $d= \log(2)/ \log(3) \approx 0.63$; f) $d = \log(4)/ \log(3) \approx 1.26$; g) $d=2$. For details see text \rev{of \S\ref{sec:intro}}.}
\label{fig:2Dexs}
\end{figure}

\begin{figure}%
\centering
\subfl{$\rho=1/2$, non-disjoint}
{\includegraphics[width=.40\linewidth]{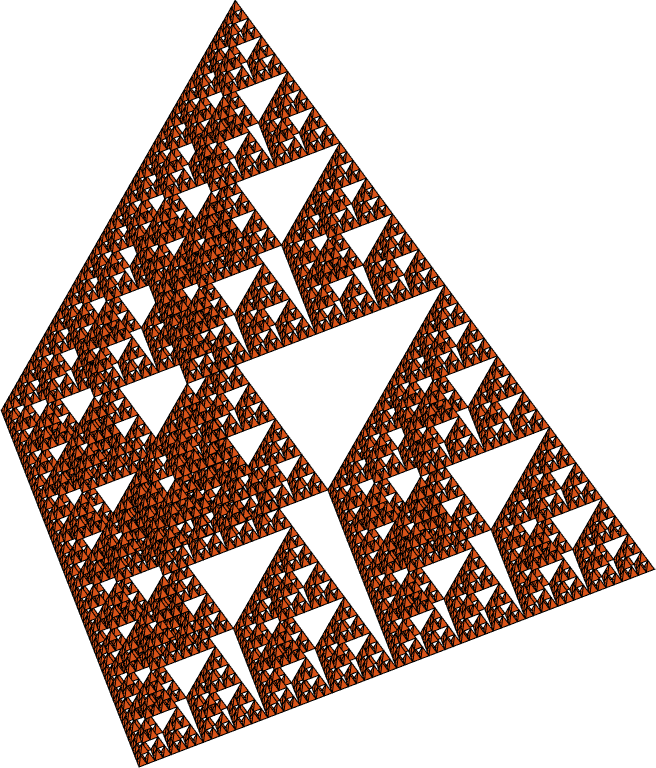}}
\hspace{15mm}
\subfl{$\rho=3/8$, disjoint}{\includegraphics[width=.40\linewidth]{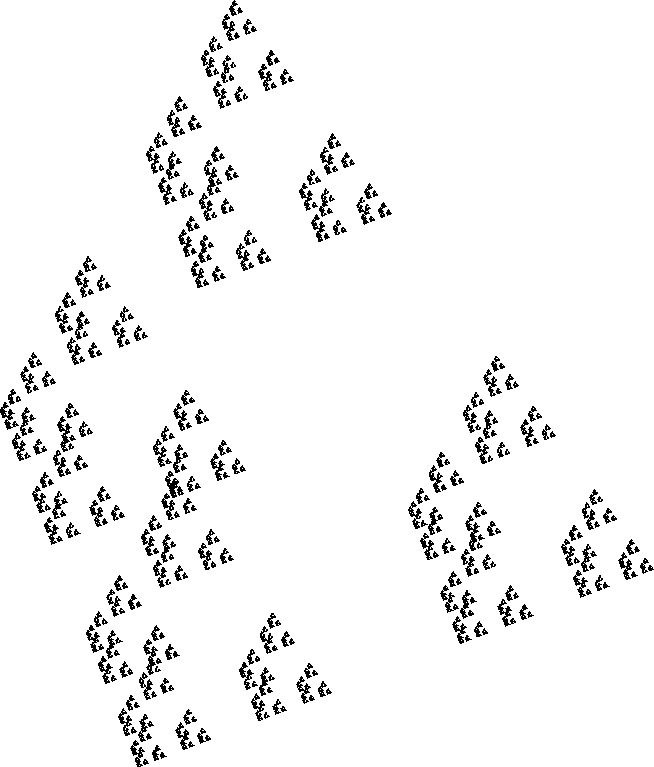}}
\caption{
Sierpinski tetrahedron $d$-sets in 3D space, attractors of the IFS \eqref{eq:TetIFS} for $\rho=1/2$ ($d=2$)
and $\rho=3/8$ ($d=\log{4}/\log(8/3)\approx 1.41$). 
}
\label{fig:Tet}
\end{figure}

A range of examples with $n-2<d\leq n$, relevant to our later discussions and computations, is pictured in Figures \ref{fig:2Dexs} and \ref{fig:Tet}. Figure \ref{fig:2Dexs} shows examples in 2D space ($n=2$), namely: (a) $\Gamma = \overline{D}$ is the closure of a bounded Lipschitz open set $D$ ($d=n=2$); (b) $\Gamma =\partial D$ is the boundary of the same set ($d=n-1=1$); (c) $\Gamma=[0,1]\times\{0\}$ is a line segment ($d=n-1=1$); (d)  $\Gamma=[-1,1]\times\{0\}\cup \{0\}\times [-1,1]$ is the cross formed by two line segments, an example of a multi-screen in the sense of \cite{ClHi:13} (all such multi-screens are $d$-sets with $d=n-1$); (e) $\Gamma = C\times\{0\}$, where $C\subset [0,1]$ is the classical middle-third Cantor set ($d=\log(2)/\log(3)\approx 0.6309$); (f) $\Gamma$ is the Koch curve ($d=\log(4)/\log(3)\approx 1.262$); (g) $\Gamma$ is the closure of the Koch snowflake domain ($d=2$).

Examples (c) and (e)-(g) in Figure \ref{fig:2Dexs} are all %
fixed points (attractors) of an iterated function system (IFS)
satisfying the standard open set condition (OSC) (we recall these definitions in \S\ref{sec:HausdorffMeasure}).
As we recall in \S\ref{sec:HausdorffMeasure}, every such IFS attractor is a $d$-set, with $d$ its fractal (Hausdorff) dimension. Figure \ref{fig:Tet} shows examples in 3D space ($n=3$) where $\Gamma$ is an IFS attractor that is a Sierpinski tetrahedron, with $d=2$ and $d=\log{4}/\log(8/3)\approx 1.41$. (We show numerical simulations for \rev{scattering by these shapes} %
in \S\ref{sec:Numerics}.)
These examples make clear that our results \rev{include} multiple scattering cases where $\Gamma$ has \rev{a complicated geometry} and/or multiple components. Indeed, the IFS attractor examples \rev{in} Figure \ref{fig:2Dexs}(e) and Figure \ref{fig:Tet}(b) are both fractal cases where the IFS is disjoint (as defined in \S\ref{sec:HausdorffMeasure}) so that $\Gamma$ is totally disconnected and has uncountably many components!

A key result, \rev{proved in Theorem~\ref{lem:ISasHauss}},
important both theoretically and computationally, is that in the $d$-set case we can interpret the
Newton potential $\mathcal{A}\phi$ as an integral with respect to $\mathcal{H}^d$, the $d$-dimensional Hausdorff measure.
Similarly, we show in \rev{Theorem~\ref{lem:ISasHauss}}
that the operator $A$ in \eqref{eqn:IE} %
can also be written equivalently as an integral operator \rev{$\IA$}
with respect to the $\mathcal{H}^d$ measure.

\rev{In certain special cases our formulation coincides with previously studied IE formulations.}
If $d=n$ (e.g., Figure~\ref{fig:2Dexs}(a), (g)), so that $\mathcal{H}^d$ is $n$-dimensional Lebesgue measure, \rev{$\IA$} %
is a volume integral \rev{operator} and \eqref{eqn:IE} is equivalent to a volume IE
on $\Gamma$. 
\rev{(Note however that, where $\Omega_+$ is the unbounded component of $\Omega$, the solution of \eqref{eqn:IE} is always supported in $\partial \Omega_+\subset \partial \Gamma$; see 
Remark \ref{rem:support}.)}
If \mbox{$d=n-1$} and $\Gamma$ is the whole or part of the boundary of a bounded Lipschitz open set (e.g., Figure~\ref{fig:2Dexs}(b), (c)), or is a multi-screen in the sense of \cite{ClHi:13} (e.g., Figure~\ref{fig:2Dexs}(d)), then $\mathcal{H}^d$ is standard surface measure (e.g., \cite[Theorem 3.8]{EvansGariepy}),  \rev{$\IA$} %
is a surface integral \rev{operator}, specifically an acoustic single-layer \rev{boundary integral operator}, and \eqref{eqn:IE} is equivalent to a standard first kind boundary IE (see Remark \ref{rem:2nd}). %
Finally, in the case when $\Gamma$ is a subset of a hyperplane (e.g., Figure~\ref{fig:2Dexs}(c), (e)), our formulation reduces to cases studied recently in \cite{ChaHewMoi:13,ScreenPaper,BEMfract,HausdorffBEM,CaChGiHe23}. In particular, \rev{as already noted above,} our results and methods build on those in \cite{HausdorffBEM,CaChGiHe23}, where we study scattering by fractal planar screens that are $d$-sets with $n-2<d\leq n-1$.

In the $d$-set case we show that the integral operator $\IA$ is a continuous mapping on a scale of Sobolev spaces on $\Gamma$ (Proposition \ref{lem:cont}), a first step in a regularity theory for solutions of \eqref{eqn:IE}. 
In the $d$-set case we are also able to propose (in \S\ref{sec:HausdorffIEM}\rev{, Eqn.~\eqref{eqn:Variational} in particular}) a \rev{piecewise-constant} %
Galerkin IE method (IEM) \rev{for the numerical solution of \eqref{eqn:IE}, which we prove (in Theorem~\ref{thm:Convergence}) is convergent as the mesh size $h$ tends to zero.
}
Moreover, the entries of the matrix and right-hand-side of the linear system %
defining the Galerkin solution are given explicitly as double and single integrals, respectively, with respect to $\mathcal{H}^d$ measure.

Our IEM is familiar in the case where $\Gamma$ is the whole or part of the boundary of a bounded Lipschitz open set, or is a multi-screen in the sense of \cite{ClHi:13}; our IEM is then a standard Galerkin boundary element method (BEM) with a piecewise-constant approximation space. In the case where $\Gamma$ is a $d$-set that is a planar screen our IEM coincides with that of \cite{HausdorffBEM}; indeed, our linear system is identical to that in \cite[Eqn (55)]{HausdorffBEM}.

Our strongest results (see \S\ref{sec:GalerkinBounds}) are for the special $d$-set case where $\Gamma$ is the attractor of an IFS satisfying the OSC (e.g., Figure~\ref{fig:2Dexs}(c), (e)-(g), Figure~\ref{fig:Tet}).
In this case\rev{, under appropriate assumptions on $\Gamma$, we are able to prove (in Theorems~\ref{thm:IEMConvergence} and \ref{thm:ApproxDisjoint}) convergence rates for our IEM.
The rate of convergence depends on the regularity of the solution $\phi$.
In Hypothesis~\ref{conj} and Remark \ref{rem:ConvRates} we introduce a hypothesis relating to this regularity and detail the resulting convergence rates.}  
In this 
case 
we also propose in \S\ref{sec:Quadrature} a fully discrete implementation, evaluating our Hausdorff single and double integrals using recently proposed quadrature methods for singular integrals on IFS attractors \rev{satisfying the OSC} \cite{HausdorffQuadrature,NonDisjointQuad}.

In \S\ref{sec:Numerics} we show computations using our fully discrete Galerkin IEM, solving \eqref{eqn:IE} and computing the scattered field $u=\mathcal{A}\phi$ for scatterers $\Gamma$ including the examples in Figure~\ref{fig:Tet}.
This section includes numerical experiments exploring the convergence of our method.
These suggest that our regularity \rev{hypothesis (Hypothesis~\ref{conj})} %
is true for many of the examples we study, %
and that the conditions of Theorem \ref{thm:IEMConvergence}\rev{, guaranteeing the validity of our convergence rate analysis}, 
may be satisfied generally whenever $\Gamma$ is an IFS attractor satisfying the OSC (establishing this, and proving some version of \rev{our regularity hypothesis}, are both open problems).

\rev{We end this introduction by noting that} an alternative approach to the simulation of scattering by fractals is to approximate the fractal by a smoother ``prefractal'' scatterer and apply a more conventional numerical method to the resulting approximate scattering problem. This was the approach taken in \cite{BEMfract,ImpedanceScreen}, and in the earlier work in \cite{jones1994fast} and \cite{panagouli1997fem} for Laplace and elasticity problems. An achievement of \cite{BEMfract,ImpedanceScreen} was to prove convergence (without rates) of conventional BEMs for prefractal approximations of  fractal \rev{planar} screen problems, via Mosco convergence techniques.
In principle, a similar analysis could be carried out for the problems under consideration in the current paper. However,
we do not pursue this here.

\section{Preliminaries}
\label{sec:prelim}
\rev{In this section we set some notation/terminology, and briefly review a number of known results that we will use later in the paper. 
Further details can be found in the references provided.} 

Throughout, for $n\in\N$ and $E\subset \R^n$, %
$\overline{E}$, $\partial E$, and $E^\circ:=\overline{E}\setminus \partial E$ denote the closure, boundary, and interior of $E$ with respect to the standard Euclidean metric on $\R^n$, and $E^c:= \R^n\setminus E$ its complement in $\R^n$. In the case that $E$ is measurable, $m(E)$ denotes its $n$-dimensional Lebesgue measure. $B_r(x)\subset \R^n$ denotes the closed ball of radius $r$ centred on $x$.

\subsection{Hausdorff measure and dimension, \texorpdfstring{$d$}{d}-sets, IFS attractors}
\label{sec:HausdorffMeasure}

For $0\leq d\leq n$ let $\cH^d$ denote the Hausdorff $d$-measure
on $\R^n$ and let $\dimH(S)\in [0,n]$ denote the Hausdorff dimension of $S\subset\R^n$ (see, e.g.,\ \cite{Fal}).
\rev{For convenience 
we adopt 
the normalisation of \cite[Def.~2.1]{EvansGariepy}, so that $\cH^d$ coincides with Lebesgue measure for $d=n$.}
As in \cite[\S1.1]{JoWa84} and \cite[\S3]{Triebel97FracSpec}, given $0<d\leq n$, we say a closed set $\Gamma\subset \R^n$ is a {\em$d$-set} if there exist $c_{2}>c_1>0$ such that
\begin{align}
\label{eq:dset}
c_{1}r^{d}\leq\mathcal{H}^{d}\big(\Gamma\cap B_{r}(x)\big)\leq c_{2}r^{d},\qquad x\in\Gamma,\quad0<r\leq1.
\end{align}
\rev{We note that $d$-sets are also termed \textit{Ahlfors $d$-regular} or \textit{Ahlfors-David $d$-regular sets}, e.g., \cite[p.~92]{Mattila95}.}
\rev{We note also that} if $\Gamma$ is a $d$-set then $\dimH(\Gamma)=d$. 

By an \textit{iterated function system of contracting similarities} (we abbreviate this whole phrase as  \textit{IFS})\footnote{A useful introduction to IFSs is \cite[Chap.~9]{Fal}; the website \cite{RiddleWebSite} gives many examples of IFSs and their attractors.}
we mean a collection
$\{s_1,s_2,\ldots,s_M\}$, for some $M\geq 2$,
where,
for each $m=1,\ldots,M$, $s_m:\R^{n}\to\R^{n}$, with
$|s_m(x)-s_m(y)| = \rho_m|x-y|$, $x,y\in \R^{n}$,
for some $\rho_m\in (0,1)$.
The attractor of the IFS is the unique non-empty compact set $\Gamma$ satisfying
\begin{equation} \label{eq:fixedfirst}
\Gamma = s(\Gamma), \quad \mbox{where} \quad s(E) := \bigcup_{m=1}^M  s_m(E), \quad  E\subset \R^{n}.
\end{equation}
We shall restrict our attention to \rev{\textit{OSC-IFSs}, i.e.\ }IFSs that satisfy the standard \textit{open set condition (OSC)} \cite[(9.11)]{Fal}, which implies that the attractor $\Gamma$ is a $d$-set (see, e.g.,\ \cite[Thm.~4.7]{Triebel97FracSpec}), where $d\in (0,n]$ is the unique solution of $\sum_{m=1}^M  (\rho_m)^d = 1$. 
For a \textit{homogeneous} \rev{OSC-}IFS, where $\rho_m=\rho \in (0,1)$ for $m=1,\ldots,M $, we have $d = \log(M)/\log(1/\rho)$. 
\rev{If an OSC-IFS is not homogeneous we say it is \emph{non-homogeneous}.} 
Returning to the general, not necessarily homogeneous case, the OSC also implies (again, see \cite[Thm.~4.7]{Triebel97FracSpec}) that $\Gamma$ is \textit{self-similar}, meaning that the sets
\begin{align}
\label{eq:GammamDef}
\Gamma_m:=s_m(\Gamma),\qquad m=1,\ldots,M,
\end{align}
satisfy
$\cH^d(\Gamma_{m}\cap \Gamma_{m'})=0$, $m\neq m'$,
so that $\Gamma$ is decomposed by \eqref{eq:fixedfirst} into $M$ similar %
copies of itself whose pairwise intersections have Hausdorff measure zero.
For many of our results we make the additional assumption that the sets $\Gamma_1,\ldots,\Gamma_M $ are disjoint. If this holds we say that the IFS attractor $\Gamma$ is \emph{disjoint}, the OSC is automatically satisfied (e.g., \cite[Lem.~2.5]{HausdorffBEM}), and $d<n$ (e.g.\ \cite[Lemma 2.6]{HausdorffBEM}). \rev{If $\Gamma$ is not disjoint we say it is \emph{non-disjoint}.}

The following construction makes clear that if $C$ is an IFS attractor in dimension $n-1$ then $C\times\{0\}$ is an IFS attractor in dimension $n$.
\begin{rem}[\bf Lifting attractors to higher dimensions] \label{rem:extendDim} Suppose that $M\geq 2$ and that
$S=\{s_1,s_2,$ $\ldots,$ $s_M\}$ is an IFS  on $\R^n$. For $m=1,\ldots,M$, define $\tilde s_m:\R^{n+1}\to\R^{n+1}$ by $\tilde s_m((x,t)) = (s_m(x),\rho_m t)$, for $x\in \R^n$, $t\in \R$. Then, for $m=1,\ldots,M$, 
$\tilde s_m$ is a contracting similarity with the same contraction factor $\rho_m$ as $s_m$, so that $\widetilde S =\{\tilde s_1,\tilde s_2,\ldots,\tilde s_M\}$ is an IFS  
on $\R^{n+1}$. Further, $\widetilde S$ satisfies the OSC/is disjoint if the same holds for $S$. 
If $\Gamma$ is the attractor of $S$ then $\widetilde \Gamma:= \Gamma\times \{0\}=\{(x,0):x\in \Gamma\}$ is the attractor of $\widetilde S$, and if the OSC holds for $S$ 
then $\Gamma$ and $\widetilde \Gamma$ are both $d$-sets with the same value of $d$.
\end{rem}
\begin{example}[\bf Cantor set examples of IFS attractors] \label{ex:Cantor} Let $S=\{s_1,s_2\}$, where $s_m:\R\to\R$, $m=1,2$, are defined, for some $\rho\in (0,1/2]$, by
$$
s_1(t) = \rho t, \qquad s_2(t) = 1+ \rho(t-1), \qquad t\in \R.
$$
Then $S$ is a homogeneous IFS 
with attractor $C$ that is the ``middle-$(1-2\rho)$'' Cantor set, given by
$$
C = \bigcap_{n=0}^\infty C_n, \quad \mbox{where} \quad C_0:=[0,1], \;\; C_n:= s(C_{n-1}), \;\; n\in \N,
$$
and $s$ is the mapping given by \eqref{eq:fixedfirst} on the set of subsets of $\R$. In the case $\rho=1/2$, $C=C_n=[0,1]$ for each $n$. If $0<\rho<1/2$ then $C_n$ is a union of $2^n$ disjoint closed intervals and $C_n$ is obtained from $C_{n-1}$ by removing the middle $(1-2\rho)$ from each of the intervals comprising $C_{n-1}$. This IFS satisfies the OSC (see \cite[\S9.2]{Fal}), so that (see above) it is a $d$-set with $d=\dim_H(C) = \log(2)/\log(1/\rho)$. The attractor $C$ is disjoint if $\rho<1/2$. By Remark \ref{rem:extendDim}, $C\times \{0\}\subset \R^2$, shown for $\rho=1/2$ and $\rho=1/3$ in Figure \ref{fig:2Dexs}(c) and (e), respectively, is also the attractor of an IFS, %
is a $d$-set with the same value of $d$, and is disjoint if $\rho<1/2$.
\end{example}
\begin{example}[\bf Koch curve] \label{ex:Koch} The Koch curve $\Gamma\subset \R^2$, shown in Figure~\ref{fig:2Dexs}(f), is the attractor of the homogeneous IFS %
$\{s_1,s_2,s_3,s_4\}$, where the mappings $s_m:\R^2\to \R^2$, $m\in\{1,\ldots,4\}$, are given by
\begin{align*}
\label{}
&s_1(x) = x/3, \;\; &&s_2(x) = Rx/3 + \left(\begin{array}{c}\frac{1}{3}\\0\end{array}\right),
\;\; \\ &s_3(x) = R^{-1}x/3 + \left(\begin{array}{c}\frac{1}{2}\\\frac{1}{2\sqrt{3}}\end{array}\right),
\;\; &&s_4(x) = x/3 + \left(\begin{array}{c}\frac{2}{3}\\0\end{array}\right),
\end{align*}
for $x\in \R^2$, where $R$ is the (orthogonal) rotation matrix for rotation counter-clockwise by angle $\pi/3$.
$S$ satisfies the OSC (e.g., \cite[Ex.~9.5]{Fal}) and so $\Gamma$ is a $d$-set with $d=\log(4)/\log(3)\approx 1.26$. By Remark \ref{rem:extendDim}, $\Gamma\times \{0\}\subset \R^3$ is also the attractor of a homogeneous IFS, %
and is a $d$-set with the same value of $d$.
\end{example}

\begin{example}[\bf Koch snowflake] \label{ex:KochSnowflake} The Koch snowflake $\Gamma\subset \R^2$, shown in Figure~\ref{fig:2Dexs}(g), is the attractor of a non-homogeneous IFS of 7 contracting similarities satisfying the OSC. It is non-disjoint and has dimension $d=2$.
For details see \cite[\S5.4]{NonDisjointQuad}.
\end{example}
\begin{example}[\bf Sierpinski tetrahedron] \label{ex:SierpinskiTetrahedron} A Sierpinski tetrahedron $\Gamma\subset \R^3$ can be defined, for every $0<\rho\leq 1/2$, as the attractor of the IFS comprising the four contracting similarities
\begin{align}
\label{eq:TetIFS}
s_i(x): = x_i + \rho(x-x_i), \qquad i=1,\ldots,4,
\end{align}
where the $x_i$ are the vertices of a unit tetrahedron, explicitly
\begin{align*} x_1 = (0,0,0)^T&, \; x_2=(1,0,0)^T, \; x_3=(1/2,\sqrt{3}/2,0)^T, \; \\&x_4=(1/2,1/(2\sqrt{2}),\sqrt{5}/(2\sqrt{2}))^T.\end{align*}
$\Gamma$ is shown in Figure~\ref{fig:Tet} for $\rho=3/8$ and $\rho=1/2$.
It satisfies the OSC, has dimension $d=\log 4/\log(1/\rho)$, and is disjoint for $0<\rho<1/2$ but not for $\rho=1/2$.
\end{example}

\subsection{Function spaces}
\label{sec:FunctionSpaces}

\rev{In this section we briefly review some function space definitions and results that will be used in our scattering problem and its IE formulations. 
} 
Our notation follows that of \cite{McLean} and \cite{HausdorffBEM}. 
\rev{Throughout, our function spaces are spaces of complex-valued functions/distributions.}

For $s\in \R$ we let $H^s\Rn$ denote the usual Bessel potential Sobolev space.
For a non-empty open set $\Omega\subset \R^n$, where $C_0^\infty(\Omega)$ is the set of those $C^\infty$ functions that are compactly supported in $\Omega$, we
define
$\tH^s(\Omega):=\overline{C_0^\infty(\Omega)}^{H^s(\R^n)}$, \rev{a closed subspace of $H^s(\R^n)$}. 
For a \rev{non-empty} closed set $E\subset \R^n$ we denote by $H^s_E$ the set of all elements of $H^s\Rn$ whose %
support is contained in $E$, also a closed subspace of $H^s(\R^n)$. 
We recall that, for $s\in \R$, $H^{-s}(\R^n)$ is dual to $H^{s}(\R^n)$, with the duality pairing $\langle \cdot,\cdot\rangle_{H^{-s}(\R^n)\times H^s(\R^n)}$ extending the $L_2(\R^n)$ inner product,\footnote{
\rev{Note that all our distributions and dual spaces are anti-linear rather than linear to suit our complex Hilbert space setting.}} 
and that, with respect to this same pairing, %
$H^{-s}_E$ is dual to $\tH^{s}(E^c)^\perp$,
the orthogonal complement of $\tH^{s}(E^c)$ \rev{in} $H^s(\R^n)$, for any \rev{non-empty closed $E\subsetneqq\R^n$} \cite[Cor.~3.4]{ChaHewMoi:13}.

For \rev{non-empty} open sets $\Omega\subset\R^{n}$
we also work with the classical Sobolev space
$W^1(\Omega)$,
normed by $\|u\|_{W^1(\Omega)}^2=\|u\|_{L_2(\Omega)}^2+\|\nabla u\|_{L_2(\Omega)}^2$, the closed subspace $W^1_0(\Omega)=\overline{C_0^\infty(\Omega)}^{W^1(\Omega)}$,
and their ``local'' versions $W^{1,{\rm loc}}(\Omega)$ and
$W_0^{1,{\rm loc}}(\Omega)$,
defined
as the sets of measurable functions $v$ on $\Omega$
such that $\sigma|_\Omega v$ is in
$W^{1}(\Omega)$
or $W^1_0(\Omega)$,
respectively, for every $\sigma\in C^\infty_0(\R^n)$. Similarly, we define local versions $H^{1,{\rm loc}}(\R^n)$ and $\tH^{1,{\rm loc}}(\Omega)$ of $H^1(\R^n)$ and $\tH^1(\Omega)$.  
We \rev{note} that $H^1(\R^n)=W^{1}(\R^n)$ (and hence $H^{1,{\rm loc}}(\R^n)=W^{1,{\rm loc}}(\R^n)$), 
and that $\tH^{1}(\Omega)=W^{1}_0(\Omega)$ (and hence $\tH^{1,{\rm loc}}(\Omega)=W^{1,{\rm loc}}_0(\Omega)$) for arbitrary non-empty open $\Omega\subset\R^{n}$,
with the latter identification involving the restriction operator, with extension by zero as its inverse.

Finally,
for \rev{compact} $\Gamma\subset\R^n$ let $C^\infty_{0,\Gamma}$ denote the set of functions in $C^\infty_0(\R^n)$ that equal one in a neighbourhood of $\Gamma$.

\section{Scattering problem and integral equation formulations}
\label{sec:BVPsIEs}

Let $\Gamma\subset\R^n$ \rev{($n=2,3$)} be non-empty and compact and let $k>0$. We consider the time-harmonic acoustic scattering of an incident wave $u^i$ by $\Gamma$, a sound-soft obstacle. We assume that
the incident wave $u^i$ is an element of $W^{1,{\rm loc}}(\R^{n})=H^{1,{\rm loc}}(\R^{n})$ satisfying
the Helmholtz equation
\begin{align}
\label{eqn:HE}
\Delta u + k^2 u = 0
\end{align}
in a distributional sense in some neighbourhood of $\Gamma$ (so that $u^i$ is $C^\infty$ in that neighbourhood by elliptic regularity, see, e.g., \cite[Thm~6.3.1.3]{Evans2010}); for instance, $u^i$ might be the plane wave $u^i(x)=\re^{\ri k \vartheta\cdot x}$ for some $\vartheta\in\R^{n}$ with $|\vartheta|=1$).
Where $\Omega:= \Gamma^c =\R^n\setminus \Gamma$, we seek a scattered field $u\in W^{1,{\rm loc}}(\Omega)$ satisfying
\rf{eqn:HE} in a distributional sense
in $\Omega$ (so that $u\in C^\infty(\Omega)$ by elliptic regularity),
the Sommerfeld radiation condition
\begin{align}
\label{eqn:SRC}
\pdone{u(x)}{r} - \ri k u(x) = o(r^{-(n-1)/2}), \qquad r:=|x|\to\infty, \text{ uniformly in } \hat x:=x/|x|,
\end{align}
and the boundary condition $u=-u^i$ on $\partial \Omega = \partial\Gamma$, enforced by requiring that the total field
\begin{align}
\label{eqn:BC_Wonezero}
u^t:= u+u^i\in W^{1,{\rm loc}}_0(\Omega).
\end{align}
\rev{Note that $\partial\Omega=\Gamma$ if and only if $\Gamma$ has empty interior.}

This problem, which we will refer to as our {\em scattering problem}, 
is \rev{uniquely solvable} %
in the case that $\Omega$ is connected (see, e.g.\ \cite[\S3]{WavenumberExplicit}). Figure \ref{fig:2Dexs}, with the  exception of (b), and Figure \ref{fig:Tet} are all examples of such cases. But, to understand the well-posedness of our IE formulation, %
we also want to allow cases (such as Figure \ref{fig:2Dexs}(b)) where $\Omega$ is not connected, in which case $\Omega = \Omega_+\cup\Omega_-$, where $\Omega_\pm$ are disjoint open sets, with $\Omega_+$ the unbounded component of $\Omega$ and $\Omega_-$ a bounded open set. In such cases the above problem decouples into a uniquely-solvable scattering problem for $u|_{\Omega_+}\in  W^{1,{\rm loc}}(\Omega_+)$ and the homogeneous Dirichlet problem that $u^t|_{\Omega_-}\in  W_0^{1}(\Omega_-)$ satisfies \eqref{eqn:HE} in $\Omega_-$. Thus, \rev{if $\Omega$ is not connected,} our scattering problem is uniquely solvable (with $u^t=0$ and $u=-u^i$ in $\Omega_-$) if and only if $k^2$ is not a Dirichlet eigenvalue of $-\Delta$ in $\Omega_-$.
We will frequently assume that $k$ is not one of these exceptional values, making the following assumption.

\begin{ass}%
\label{ass:Uniqueness}
\rev{
The only $v\in W^{1,{\rm loc}}_0(\Omega)$ satisfying 
\eqref{eqn:HE} in $\Omega:=\Gamma^c$ and \eqref{eqn:SRC} is $v=0$. 
}
\end{ass}

\begin{rem} \label{rem:Ass}
We emphasise that  Assumption \ref{ass:Uniqueness} holds for all $k>0$ if $\Omega$ is connected, and that if $\Omega$ is not  connected, in which case $\Omega = \Omega_+\cup\Omega_-$, where $\Omega_\pm$ are the disjoint open sets defined above, then Assumption \ref{ass:Uniqueness} holds if and only if there is no non-trivial $v\in W_0^1(\Omega_-)$ that satisfies \eqref{eqn:HE} in $\Omega_-$, which holds for all $k>0$ outside a countable set whose only accumulation point is infinity.
\end{rem}

\rev{In the case that Assumption \ref{ass:Uniqueness} holds, and where $u$ is the unique solution to the above scattering problem
and $u^t:= u+u^i\in W_0^{1,{\rm loc}}(\Omega)$, it proves convenient to extend $u^t$ by zero from $\Omega$ to $\R^n$ so that (as noted in \S\ref{sec:FunctionSpaces}) $u^t\in \tH^{1,{\rm loc}}(\Omega)\subset H^{1,{\rm loc}}(\R^n)$. We can correspondingly extend the definition of $u$ from $\Omega$ to $\R^n$, by setting $u:=u^t-u^i\in H^{1,{\rm loc}}(\R^n)$ (so that $u=-u^i$ on $\Gamma$ almost everywhere with respect to $n$-dimensional Lebesgue measure). We will assume these extensions hereafter, so that $u$ and $u^t$ are defined (almost everywhere) on $\R^n$ and $u,u^t\in   H^{1,{\rm loc}}(\R^n)$.} 
Alternatively, one can require from the outset that $u\in  H^{1,{\rm loc}}(\R^n)$ and satisfies \eqref{eqn:HE} in $\Omega$ and \eqref{eqn:SRC} and that $u^t=u+u^i\in \widetilde H^{1,{\rm loc}}(\Omega)$, in which case $u|_\Omega$ is the unique solution to the above scattering problem and $u=-u^i$ on $\Gamma$ (almost everywhere with respect to $n$-dimensional Lebesgue measure).

Introducing the orthogonal projection operator
\begin{equation} \label{eq:Porth}
P:H^{1}(\R^n)\to \tH^{1}(\Omega)^\perp,
\end{equation}
we observe that $u^t\in \widetilde H^{1,{\rm loc}}(\Omega)$ if and only if $P(\sigma u^t)=0$ for some, and hence every\footnote{If $\sigma_1,\sigma_2\in C^\infty_{0,\Gamma}$ then $\sigma_1=\sigma_2$ on some open set $G\supset\Gamma$, so that, for $v\in H^{1,\mathrm{loc}}(\R^n)$, $(\sigma_1- \sigma_2)v\in H^{1}_{\R^n\setminus G}\subset
\tH^{1}(\Omega)$, so that $P((\sigma_1- \sigma_2)v)=0$.}, $\sigma\in C^\infty_{0,\Gamma}$, in other words, if and only if
\begin{align}\label{a1bc}
P(\sigma u)&=g,
\end{align}
where
\begin{align}
\label{eqn:gDefScatteringProblem}
g:=-P (\sigma u^i).
\end{align}
Thus \eqref{a1bc} is an alternative formulation of the boundary condition that $u=-u^i$ on $\Gamma$, equivalent to the requirement that $u^t\in \widetilde H^{1,{\rm loc}}(\Omega)$.

\rev{To summarise, our scattering problem can be stated as follows: find $u\in  H^{1,{\rm loc}}(\R^n)$ satisfying \eqref{eqn:HE} in $\Omega$, \eqref{eqn:SRC}, and 
\eqref{a1bc}.  
We now reformulate this problem
as an 
integral equation.} 

\subsection{\rev{The integral equation on general compact sets}} \label{sec:IECompact}

In what follows, $\cA$ will
denote the standard acoustic Newton potential operator, defined for compactly supported $\phi \in L_{2}(\R^n)=H^{0}(\R^n)$
by
\begin{align}
\label{eq:NPPrep}
\cA\phi(\bx)=\int_{\R^n}\Phi(\bx,\by)\phi(\by)\,\rd \by, \qquad \bx\in \R^n,
\end{align}
where 
\rev{$\Phi(\bx,\by):=\re^{\ri k |\bx-\by|}/(4\pi |\bx-\by|)$ ($n=3$), $\Phi(\bx,\by):=\frac\ri4H^{(1)}_0(k|\bx-\by|)$ ($n=2$),  is the standard fundamental solution of the Helmholtz equation, and $H_0^{(1)}$ is the Hankel function of the first kind of order zero (e.g., \cite[Eqn.~(\href{https://dlmf.nist.gov/10.4.E3}{10.4.3})]{DLMF}).
}
It is standard (see e.g.\ \cite[Thm 3.1.2]{sauter-schwab11}) that, for $s\in \R$, in particular for $s=0$,
$\cA$ is continuous as a mapping
\begin{align}
\label{eq:NewtonMapping}
\cA: H^{s-1}_{\rm comp}(\R^n)\to H^{s+1,{\rm loc}}(\R^n),%
\end{align}
where, for $s\in \R$, $H^{s}_{\rm comp}(\R^n)$ is the space of compactly supported elements of $H^{s}(\R^n)$. \rev{Further}  
(e.g.\ \cite[Thm 3.1.4]{sauter-schwab11}),
\begin{align}
\label{eq:NewtonHelmholtz}
(\Delta+k^2)\cA\phi = \cA(\Delta+k^2)\phi = -\phi, \qquad \phi \in H^{-1}_{\rm comp}(\R^n).
\end{align}

Viewing $\cA$ as an operator $\cA:H^{-1}_{\Gamma}\to H^{1,{\rm loc}}(\R^n)=W^{1,{\rm loc}}(\R^n)$,
we have that
\begin{equation} \label{eq:NPdef}
\cA\phi(x) = \langle(\sigma\Phi(x,\cdot)), \overline{\phi}\rangle_{H^{1}(\R^n)\times H^{-1}(\R^n)}, \quad x\in \Omega, %
\end{equation}
where $\overline{\phi}$ denotes the complex conjugate of $\phi$ and $\sigma$ is any element of $C^\infty_{0,\Gamma}$ with $x\not\in \supp{\sigma}$.
We define the operator $A:H^{-1}_{\Gamma}\to \tH^{1}(\Omega)^\perp = (H^{-1}_{\Gamma})^*$ 
\rev{(the latter equality holding by \cite[Cor.~3.4]{ChaHewMoi:13})} by
\begin{equation} \label{eq:ADef}
A\phi:=  P(\sigma\cA\phi), \qquad \phi\in H^{-1}_{\Gamma},
\end{equation}
with $\sigma \in C^\infty_{0,\Gamma}$ arbitrary. 
We also define the associated sesquilinear form $a(\cdot,\cdot)$  on $H^{-1}_\Gamma\times  H^{-1}_\Gamma$ by
\begin{align}
\label{eqn:Sesqui}
a(\phi,\psi):=
\langle A\phi,\psi\rangle_{H^{1}(\R^n)\times H^{-1}(\R^n)},\qquad \phi, \psi\in H^{-1}_{\Gamma}.
\end{align}
This form is compactly perturbed coercive, meaning that the 
operator $A:H^{-1}_{\Gamma}\to \tH^{1}(\Omega)^\perp$ is a compact perturbation of a coercive operator (see, e.g., \cite[\S2.2]{BEMfract} for detailed definitions and discussion). The following lemma 
is 
a generalisation of \cite[Prop.~8.7, 8.8]{ClHi:13}, and our proof is similar.

\begin{lem} \label{lem:coer}
\rev{Let $\Gamma\subset\R^n$ be compact.} The sesquilinear form $a(\cdot,\cdot)$ is continuous and compactly perturbed coercive on $H^{-1}_\Gamma\times  H^{-1}_\Gamma$, i.e., for some constants $C_a, \alpha>0$, and some compact sesquilinear form $\tilde{a}(\cdot,\cdot)$,
\begin{equation} \label{eq:ContCoer}
|a(\phi,\psi)| \leq C_a\|\phi\|_{H^{-1}_\Gamma}\, \|\psi\|_{H^{-1}_\Gamma}, \quad |a(\phi,\phi)-\tilde{a}(\phi,\phi)|\geq \alpha \|\phi\|_{H^{-1}_\Gamma}^2, \quad \phi,\psi\in H^{-1}_\Gamma.
\end{equation}
\end{lem}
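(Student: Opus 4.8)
The plan is to decompose $a = a_0 + \tilde a$, where $a_0$ is coercive and $\tilde a$ is compact, by subtracting from $\cA$ the Newton potential of the \emph{modified} Helmholtz operator $-\Delta+1$; this is the approach of \cite[Prop.~8.7, 8.8]{ClHi:13}, and most of the work will be in verifying the compactness of $\tilde a$ in the present general setting. Continuity of $a$ is immediate: $\cA$ is bounded $H^{-1}_{\rm comp}(\R^n)\to H^{1,{\rm loc}}(\R^n)$ by \eqref{eq:NewtonMapping} and $P$ from \eqref{eq:Porth} is bounded, so $A=P(\sigma\cA\,\cdot\,)$ of \eqref{eq:ADef} is bounded $H^{-1}_\Gamma\to\tH^1(\Omega)^\perp=(H^{-1}_\Gamma)^*$, whence the first estimate in \eqref{eq:ContCoer} follows from $|\langle A\phi,\psi\rangle|\le\|A\phi\|_{(H^{-1}_\Gamma)^*}\|\psi\|_{H^{-1}_\Gamma}$.

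For the coercive part I would first record the reduction $a(\phi,\psi)=\langle \sigma\cA\phi,\psi\rangle_{H^1(\R^n)\times H^{-1}(\R^n)}$ for $\phi,\psi\in H^{-1}_\Gamma$, valid because $(\Id-P)(\sigma\cA\phi)\in\tH^1(\Omega)$ while $\psi\in H^{-1}_\Gamma$ annihilates $\tH^1(\Omega)$ (part of the duality $H^{-1}_\Gamma=(\tH^1(\Omega)^\perp)^*$ from \cite[Cor.~3.4]{ChaHewMoi:13}), and which equally holds with $\cA$ replaced by any operator mapping into $H^{1,{\rm loc}}(\R^n)$. Now let $\cA_0=(-\Delta+1)^{-1}$ denote the Newton potential of $-\Delta+1$, i.e.\ convolution with the fundamental solution $\Phi_0$ obtained from $\Phi$ by replacing $k$ with $\ri$; it is bounded $H^s(\R^n)\to H^{s+2}(\R^n)$ for every $s$, so in particular $\cA_0\phi\in H^1(\R^n)$. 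Setting $a_0(\phi,\psi):=\langle P(\sigma\cA_0\phi),\psi\rangle$, the reduction above together with the fact that $(1-\sigma)\cA_0\phi$ vanishes on a neighbourhood of $\Gamma\supseteq\supp\psi$ gives $a_0(\phi,\psi)=\langle\cA_0\phi,\psi\rangle_{H^1(\R^n)\times H^{-1}(\R^n)}$, hence, by Plancherel, $a_0(\phi,\phi)=\|\phi\|_{H^{-1}(\R^n)}^2=\|\phi\|_{H^{-1}_\Gamma}^2$, so $a_0$ is coercive with constant $1$.

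It then remains to show that $\tilde a:=a-a_0$ is compact, and for this it suffices to show that $A-A_0:=P(\sigma(\cA-\cA_0)\,\cdot\,)$ is compact as a map $H^{-1}_\Gamma\to H^1(\R^n)$: then $\tilde a(\phi,\psi)=\langle(A-A_0)\phi,\psi\rangle$ is a compact sesquilinear form (since a compact operator maps weakly to strongly convergent sequences), and $a(\phi,\phi)-\tilde a(\phi,\phi)=a_0(\phi,\phi)=\|\phi\|_{H^{-1}_\Gamma}^2$, yielding the second estimate in \eqref{eq:ContCoer} with $\alpha=1$ and this $\tilde a$. The point is that $\cA-\cA_0$ is two orders smoother than $\cA$: combining $(\Delta+k^2)\cA\phi=-\phi$ (from \eqref{eq:NewtonHelmholtz}) with $(-\Delta+1)\cA_0\phi=\phi$ gives $(-\Delta+1)(\cA-\cA_0)\phi=(k^2+1)\cA\phi$, i.e.
\[(\cA-\cA_0)\phi=(k^2+1)\,\cA_0\cA\phi,\qquad \phi\in H^{-1}_\Gamma,\]
and since $\cA_0$ gains two derivatives one expects $\sigma(\cA-\cA_0)\phi$ to lie in $H^3(\R^n)$ with support in $\supp\sigma$ and norm $\le C\|\phi\|_{H^{-1}_\Gamma}$; compactness then follows because bounded subsets of $H^3(\R^n)$ supported in the fixed compact set $\supp\sigma$ are precompact in $H^1(\R^n)$ by Rellich's theorem.

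I expect the main obstacle to be making this last step rigorous, the difficulty being that $\cA\phi$ is \emph{not} compactly supported, so $\cA_0\cA\phi$ cannot be estimated by naively chaining \eqref{eq:NewtonMapping} with the mapping property of $\cA_0$ on $\R^n$. The remedy is a localisation: choose $\chi\in C_0^\infty(\R^n)$ with $\chi\equiv1$ on a ball containing $\supp\sigma$ and split $\cA\phi=\chi\cA\phi+(1-\chi)\cA\phi$. The near contribution $\chi\cA\phi$ is compactly supported and lies in $H^1(\R^n)$ with norm $\le C\|\phi\|_{H^{-1}_\Gamma}$ by \eqref{eq:NewtonMapping}, so $\sigma\cA_0(\chi\cA\phi)\in H^3(\R^n)$ with the required support and bound. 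For the far contribution, $\supp\sigma$ and $\supp(1-\chi)$ are disjoint, so the kernel $(x,y)\mapsto\sigma(x)\Phi_0(x,y)(1-\chi(y))$ is $C^\infty$, and combining the exponential decay of $\Phi_0$ off the diagonal with the decay $\cA\phi(y)=\ord{|y|^{-(n-1)/2}}$ of the radiating field (uniform over $\|\phi\|_{H^{-1}_\Gamma}\le1$) shows that $\sigma\cA_0((1-\chi)\cA\phi)$, together with all its derivatives, is bounded by $C_m\|\phi\|_{H^{-1}_\Gamma}$ in every $H^m(\R^n)$. Adding the two contributions and applying Rellich completes the argument.
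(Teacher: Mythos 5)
Your proposal is correct and follows essentially the same route as the paper's proof: the paper also splits $A=A_\ri+(A-A_\ri)$ with $A_\ri$ the operator at wavenumber $\kappa=\ri$ (your $\cA_0=(-\Delta+1)^{-1}$), obtains coercivity with constant $1$ from the Fourier representation $\widehat{\cA_\ri\phi}(\xi)=\hat\phi(\xi)/(|\xi|^2+1)$, and gets compactness from the continuity of $\cA_k-\cA_\ri:H^{-1}_\Gamma\to H^{3,{\rm loc}}(\R^n)$ followed by Rellich. The only difference is that the paper delegates the two-order smoothing gain to \cite[Rem.~3.1.3]{sauter-schwab11} and \cite[Prop.~8.8]{ClHi:13}, whereas you spell out the resolvent identity and the near/far localisation explicitly.
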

\begin{proof}
Continuity follows immediately from \eqref{eq:NewtonMapping}.
Let us temporarily introduce the notations $\cA_\kappa$ and $A_\kappa$ to denote the operators $\cA$ and $A$ with $k$ replaced by some complex wavenumber $\kappa$.
To prove that $a(\cdot,\cdot)$ is compactly perturbed coercive, we split the associated operator $A=A_k$ as $A_k=A_{\ri}+(A_k-A_{\ri})$, where $A_\ri$ is the operator with wavenumber $\kappa=\ri$.
It is easy to check that for $\phi\in C^\infty_0(\R^n)$ the Fourier transform of $\cA_\ri\phi$ is given by $\hat{\phi}(\xi)/(|\xi|^2+1)$, which gives that
\[ \langle A_\ri\phi,\phi\rangle_{H^{1}(\R^n)\times H^{-1}(\R^n)} = \int_{\R^n} \frac{|\hat\phi(\xi)|^2}{|\xi|^2+1}\,\rd \xi = \|\phi\|^2_{H^{-1}(\R^n)}, \qquad \phi\in H^{-1}_{\Gamma}, \]
so that $A_\ri:H^{-1}_{\Gamma}\to \tH^{1}(\Omega)^\perp$ is coercive %
with coercivity constant $\alpha=1$.
Further, for all $\sigma \in C^\infty_{0,\Gamma}$,
$A_k-A_\ri = P\sigma (\cA_k-\cA_\ri)$ and, arguing as in \cite[Rem.~3.1.3]{sauter-schwab11} and \cite[Prop.~8.8]{ClHi:13}, $(\cA_k-\cA_\ri):H_\Gamma^{-1}\to H^{3,{\rm loc}}(\R^n)$ is continuous, so that $\sigma(\cA_k-\cA_\ri):H_\Gamma^{-1}\to H^{1}(\R^n)$ is compact, which implies that $A_k-A_\ri$ is compact.
\end{proof}

\rev{Using Lemma \ref{lem:coer} we can prove well-posedness of the scattering problem for arbitrary compact $\Gamma$, by reformulating it as a well-posed integral equation (IE), which we do in the following theorem. 
Our description of \eqref{eq:IE} as an IE will be justified in \S\ref{sec:IEdSet}, when we discuss conditions under which the operator $A$ 
can be interpreted as an integral operator on $\Gamma$. }

\begin{thm}%
\label{thm:equivalence}
Let $\Gamma\subset\R^n$ be compact, and suppose that Assumption \ref{ass:Uniqueness} %
holds. Then $A:H^{-1}_\Gamma\to \tH^1(\Omega)^\perp$ is invertible. Further, for every $g\in \tH^{1}(\Omega)^\perp$ the problem defined by \eqref{eqn:HE} in $\Omega$, \eqref{eqn:SRC}, and \eqref{a1bc} has a unique solution $u\in H^{1,{\rm loc}}(\R^n)$ given by
\begin{align}
\label{eq:Rep}
u = \cA \phi,
\end{align}
where
$\phi\in H^{-1}_\Gamma$ is the unique solution of the %
IE
\begin{align}
\label{eq:IE}
A\phi = g,
\end{align}
\rev{which can be written equivalently in variational form as %
\begin{align}
\label{eqn:VariationalCts}
a(\phi,\psi)=\langle g,\psi\rangle_{H^{1}(\R^n)\times H^{-1}(\R^n)}, \qquad \forall \psi\in H^{-1}_{\Gamma}.
\end{align}
}
If Assumption \ref{ass:Uniqueness} does not hold then $A\phi=0$ has a non-trivial solution $\phi\in H_\Gamma^{-1}$.
\end{thm}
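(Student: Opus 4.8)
The plan is to exploit Lemma~\ref{lem:coer} together with Fredholm theory to establish invertibility of $A$, and then to verify that the representation $u=\cA\phi$ with $\phi=A^{-1}g$ solves the scattering problem, and conversely that every solution of the scattering problem arises this way. First, since Lemma~\ref{lem:coer} shows that $A:H^{-1}_\Gamma\to\tH^1(\Omega)^\perp$ is a compact perturbation of a coercive (hence boundedly invertible) operator, $A$ is Fredholm of index zero. Therefore invertibility of $A$ is equivalent to injectivity of $A$, and equivalently to injectivity of the adjoint; it suffices to prove that $A\phi=0$ implies $\phi=0$ under Assumption~\ref{ass:Uniqueness} (and the converse for the final sentence). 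So suppose $A\phi=0$, i.e. $P(\sigma\cA\phi)=0$ for $\sigma\in C^\infty_{0,\Gamma}$. Set $v:=\cA\phi\in H^{1,\mathrm{loc}}(\R^n)$. By \eqref{eq:NewtonHelmholtz}, $(\Delta+k^2)v=-\phi$, and since $\supp\phi\subset\Gamma$ this gives $(\Delta+k^2)v=0$ in $\Omega$; moreover $v$ satisfies the Sommerfeld radiation condition \eqref{eqn:SRC} because $\cA$ maps into radiating solutions (this is classical for the Newton potential of a compactly supported distribution). The condition $P(\sigma v)=0$ says exactly (by the discussion around \eqref{a1bc}, with $u^i=0$, $g=0$) that $v|_\Omega\in\widetilde H^{1,\mathrm{loc}}(\Omega)$, i.e. $v$ extended by zero lies in $W^{1,\mathrm{loc}}_0(\Omega)$. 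Assumption~\ref{ass:Uniqueness} then forces $v=0$ in $\Omega$, hence $v\equiv 0$ on $\R^n$ (it is already zero on $\Gamma$ a.e.), and then $\phi=-(\Delta+k^2)v=0$. This proves injectivity, hence invertibility of $A$.

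Next, given $g\in\tH^1(\Omega)^\perp$, define $\phi:=A^{-1}g\in H^{-1}_\Gamma$ and $u:=\cA\phi$. By \eqref{eq:NewtonMapping} (with $s=0$), $u\in H^{1,\mathrm{loc}}(\R^n)$; by \eqref{eq:NewtonHelmholtz} and $\supp\phi\subset\Gamma$, $u$ satisfies \eqref{eqn:HE} in $\Omega$; $u$ satisfies \eqref{eqn:SRC} as above; and $A\phi=g$ is, by \eqref{eq:ADef} and the equivalence noted around \eqref{a1bc}, precisely the statement \eqref{a1bc}. Hence $u$ solves the stated problem. For uniqueness, if $u_1,u_2$ are two solutions then $w:=u_1-u_2\in H^{1,\mathrm{loc}}(\R^n)$ satisfies \eqref{eqn:HE} in $\Omega$, \eqref{eqn:SRC}, and $P(\sigma w)=0$, so $w|_\Omega\in\widetilde H^{1,\mathrm{loc}}(\Omega)$ and Assumption~\ref{ass:Uniqueness} gives $w=0$ in $\Omega$; since $w=0$ on $\Gamma$ a.e.\ as well, $w\equiv 0$. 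The equivalence of \eqref{eq:IE} with the variational form \eqref{eqn:VariationalCts} is immediate from the definition \eqref{eqn:Sesqui} of $a(\cdot,\cdot)$ and the identification $\tH^1(\Omega)^\perp=(H^{-1}_\Gamma)^*$, testing against all $\psi\in H^{-1}_\Gamma$.

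Finally, for the last sentence: if Assumption~\ref{ass:Uniqueness} fails, there is a nontrivial $v\in W^{1,\mathrm{loc}}_0(\Omega)$ satisfying \eqref{eqn:HE} in $\Omega$ and \eqref{eqn:SRC}. Extend $v$ by zero to $\R^n$; then $v\in\widetilde H^{1,\mathrm{loc}}(\Omega)\subset H^{1,\mathrm{loc}}(\R^n)$, and set $\phi:=-(\Delta+k^2)v$, a distribution supported in $\Gamma$; one checks $\phi\in H^{-1}_\Gamma$ using $v\in H^{1,\mathrm{loc}}(\R^n)$. By a representation/uniqueness argument for radiating solutions (the scattered field generated by a compactly supported source equals the Newton potential of that source, since their difference is an entire radiating Helmholtz solution, hence zero) we get $v=\cA\phi$, so $A\phi=P(\sigma v)=0$ because $v|_\Omega\in\widetilde H^{1,\mathrm{loc}}(\Omega)$; and $\phi\neq 0$ since $v\neq 0$.

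The main obstacle I anticipate is the careful justification that $\cA\phi$ satisfies the Sommerfeld radiation condition for a general $\phi\in H^{-1}_\Gamma$ (not just smooth compactly supported $\phi$), and the companion fact that a radiating $H^{1,\mathrm{loc}}$ Helmholtz solution with source $\phi\in H^{-1}_\Gamma$ must coincide with $\cA\phi$ — i.e. the "Newton potential represents every radiating field with that source." This requires either a density argument from $C^\infty_0$ together with the mapping property \eqref{eq:NewtonMapping} and local stability of the radiation condition, or a Green's-representation argument on large balls; in the low-regularity setting one must take some care, but this is essentially classical potential theory and I expect it to go through without surprises given the continuity \eqref{eq:NewtonMapping} and the identity \eqref{eq:NewtonHelmholtz}.
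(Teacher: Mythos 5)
Your proposal is correct and follows essentially the same route as the paper: Fredholm-of-index-zero via Lemma~\ref{lem:coer}, injectivity of $A$ from Assumption~\ref{ass:Uniqueness} together with the fact that membership of $\sigma\cA\phi$ in $\tH^1(\Omega)$ forces $\cA\phi=0$ a.e.\ on $\Gamma$ (the paper cites \cite[eqn.~(17)]{ChaHewMoi:13} for this; you assert it, but it is the same fact), then \eqref{eq:NewtonHelmholtz} to conclude $\phi=0$, and the construction of a nontrivial kernel element when Assumption~\ref{ass:Uniqueness} fails. The only divergence is in that last step: the paper invokes Remark~\ref{rem:Ass} to take $v\in\tH^1(\Omega_-)$ compactly supported and gets $v=\cA\phi$ directly from \eqref{eq:NewtonHelmholtz}, whereas you use a Rellich-type ``entire radiating solution vanishes'' argument -- both work, but the paper's route avoids the extra machinery you flag as your main anticipated obstacle.
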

\begin{proof}

If $\phi\in H^{-1}_\Gamma$ satisfies \eqref{eq:IE} then $u$ given by \eqref{eq:Rep} solves the scattering problem, by \eqref{eq:NewtonHelmholtz} and \eqref{eq:ADef}, and the fact that the acoustic Newton potential \eqref{eq:NPPrep} satisfies 
\eqref{eqn:SRC}. 
Thus if \eqref{eq:IE} has a solution then the scattering problem has a solution, and this solution is unique if Assumption \ref{ass:Uniqueness} holds.
By Lemma \ref{lem:coer}, $A$ is continuous and compactly perturbed coercive, and hence Fredholm of index zero by Lax-Milgram. Thus, to prove $A:H^{-1}_\Gamma\to \tH^1(\Omega)^\perp$ is invertible, so that \eqref{eq:IE} has a unique solution, it suffices to prove that $A$ is injective. For this, 
suppose that $\phi\in H^{-1}_\Gamma$ and $A\phi = 0$. Then $\cA\phi$ satisfies the homogeneous scattering problem, so, if Assumption \ref{ass:Uniqueness} holds, we have $\cA\phi = 0$ in $\Omega$. But, for $\sigma\in C^\infty_{0,\Gamma}$, we also have $P(\sigma\cA\phi) = A\phi = 0$, so $\sigma\cA\phi\in \tH^1(\Omega)$. Thus, \rev{by \cite[eqn.~(17)]{ChaHewMoi:13},} 
$\cA\phi=\sigma\cA\phi=0$ in $\Omega^c=\Gamma$, almost everywhere with respect to $n$-dimensional Lebesgue measure. Hence %
$\cA\phi = 0$, and by \eqref{eq:NewtonHelmholtz} we conclude that $\phi = 0$, 
proving injectivity, and hence invertibility of $A$.

If Assumption \ref{ass:Uniqueness} does not hold then, by Remark \ref{rem:Ass}, where $\Omega_-$ is as in that remark, there exists a non-zero $v\in \tH^1(\Omega_-)\subset \tH^1(\Omega)$ such that $\Delta v + k^2 v=0$ in $\Omega_-$. By \eqref{eq:NewtonHelmholtz} we have $v=\cA\phi$, where $\phi:=-(\Delta+k^2)v\in H^{-1}_{\partial \Omega_-}\subset H^{-1}_\Gamma$. Further, for $\sigma\in C^\infty_{0,\Gamma}$, $P(\sigma v)=0$ since $\sigma v\in \tH^1(\Omega)$, so $A\phi=P(\sigma\cA\phi)=0$, and $\phi\neq 0$ since $\cA\phi=v\neq 0$. 
\end{proof}

\begin{rem}[\bf The role of the capacity of $\Gamma$] \label{rem:cap}
We make the trivial observation that, if $H_\Gamma^{-1}=\{0\}$, then the only solution to \eqref{eq:IE} is $\phi=0$, so that the scattered field $u=\cA\phi = 0$; i.e.\ the incident field does not interact with $\Gamma$. Further, 
$H^{-1}_\Gamma\neq\{0\}$ 
if and only if %
$\Gamma$ has positive $H^1$ capacity  (see, e.g.\ \cite[Thm 13.2.2]{Maz'ya}, and for a collection of related results and generalisations, see \cite{HewMoi:15,ChaHewMoi:13}). This holds if $\dimH(\Gamma)>n-2$ \cite[Thm 2.12]{HewMoi:15}, and is equivalent to $\dimH(\Gamma)>n-2$ if $\Gamma$ is a $d$-set \cite[Thm 2.17]{HewMoi:15}.
\rev{Moreover, by \cite[Thm.~3.12]{ChaHewMoi:13}, $H^{-1}_\Gamma=\{0\}$ if and only if $\tH^1(\Omega)^\perp=\{0\}$, so if $H^{-1}_\Gamma=\{0\}$ then the datum $g$ of the IE \eqref{eq:IE} is zero (recall the definition of $g$ in \eqref{eq:Porth}, \eqref{eqn:gDefScatteringProblem}).}
\end{rem}

The following proposition concerns the support of the IE solution, and allows us to determine when the scattered fields and IE solutions for different scatterers $\Gamma$ coincide. 

\begin{prop}
\label{prop:components2}
Let $\Gamma\subset\R^n$ be compact \rev{with non-empty interior $\Gamma^\circ$} and let $\Omega:=\Gamma^c$ be connected. Let
$u=\cA\phi$ be the unique solution of the scattering problem
for $\Gamma$, 
where $\phi\in H_\Gamma^{-1}$ is the unique solution of \eqref{eq:IE}, with $g$ given by \eqref{eqn:gDefScatteringProblem}. Then $\phi\in H^{-1}_{\partial \Gamma}$. Suppose further that $\Gamma_\dag$ is compact, with $\partial \Gamma \subset \Gamma_\dag\subset \Gamma$, and that Assumption \ref{ass:Uniqueness} is 
satisfied  by $\Gamma_\dag$, and let 
$u_\dag=\cA\phi_\dag$ 
be the unique solution of the scattering problem for $\Gamma_\dag$, 
where $\phi_\dag\in H_{\Gamma_\dag}^{-1}$ is the unique solution of the IE for $\Gamma_\dag$. Then
$u_\dag=u$ and $\phi_\dag=\phi\in H^{-1}_{\partial \Gamma}$.
\end{prop}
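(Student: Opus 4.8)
The plan is to first establish the support statement $\phi\in H^{-1}_{\partial\Gamma}$ using the fact that $u=\cA\phi$ vanishes on $\Gamma^\circ$, and then exploit this support information to transfer the solution between $\Gamma$ and $\Gamma_\dag$. For the first part, recall from the scattering problem that $u=-u^i$ on $\Gamma$ almost everywhere with respect to $n$-dimensional Lebesgue measure, and hence on the open set $\Gamma^\circ$ the function $u$ coincides with the real-analytic (by elliptic regularity) solution $-u^i$ of the Helmholtz equation. On the other hand, by \eqref{eqn:HE} and \eqref{eqn:BC_Wonezero}, $u^t=u+u^i=0$ in $\Gamma^\circ$ as well (since $\Omega=\Gamma^c$ and $\Gamma^\circ$ is disjoint from $\overline\Omega$ only up to $\partial\Gamma$), so in fact $u^t$ extended by zero is identically zero on $\Gamma^\circ$. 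Then $\phi=-(\Delta+k^2)u$ by \eqref{eq:NewtonHelmholtz}, and since $u=-u^i$ with $u^i$ smooth and Helmholtz-satisfying on a neighbourhood of $\Gamma$, we get $(\Delta+k^2)u=0$ in a neighbourhood of $\Gamma^\circ$ in the distributional sense; also $(\Delta+k^2)u=0$ in $\Omega$. Thus $\phi$ is supported in $\R^n\setminus(\Omega\cup\Gamma^\circ)=\partial\Gamma$, i.e.\ $\phi\in H^{-1}_{\partial\Gamma}$. A cleaner route, which I would prefer to present, is to invoke Remark~\ref{rem:support} (referenced in the introduction): the IE solution is always supported in $\partial\Omega_+\subset\partial\Gamma$; here, since $\Omega$ is connected, $\Omega=\Omega_+$ and so $\phi\in H^{-1}_{\partial\Gamma}$ directly.

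For the second part, the key observation is the chain of inclusions $H^{-1}_{\partial\Gamma}\subset H^{-1}_{\Gamma_\dag}\subset H^{-1}_\Gamma$, which follows from $\partial\Gamma\subset\Gamma_\dag\subset\Gamma$ and the definition of $H^{-1}_E$ as distributions supported in $E$. So $\phi\in H^{-1}_{\partial\Gamma}\subset H^{-1}_{\Gamma_\dag}$ is a legitimate candidate density for the scatterer $\Gamma_\dag$. I would then verify that $u=\cA\phi$ solves the scattering problem for $\Gamma_\dag$: it satisfies \eqref{eqn:HE} in $\Omega_\dag:=\Gamma_\dag^c\supset\Omega$ because $\phi$ is supported in $\partial\Gamma\subset\Gamma_\dag$ so $(\Delta+k^2)\cA\phi=-\phi=0$ off $\Gamma_\dag$; it satisfies \eqref{eqn:SRC} since $\cA\phi$ always does; and the boundary condition $u^t=u+u^i\in W^{1,\mathrm{loc}}_0(\Omega_\dag)$ needs checking on the possibly larger domain $\Omega_\dag$. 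For this last point I would use that $u^t=0$ a.e.\ on $\Gamma^\circ\supset\Gamma_\dag^\circ\setminus$(something), together with $u^t=0$ already on $\Omega$ outside $\overline{\Omega_\dag}$... more carefully: $u^t$ extended by zero lies in $\tH^{1,\mathrm{loc}}(\Omega)$; since $\Omega\subset\Omega_\dag$ and the "extra" region $\Omega_\dag\setminus\Omega=\Gamma\setminus\Gamma_\dag$ is where $u^t$ has already been shown to vanish (it is contained in $\Gamma$ where $u^t=0$ a.e., and in fact in $\Gamma^\circ\cup\partial\Gamma$ with $u^t=0$ throughout the interior part), the zero-extension of $u^t$ to $\Omega_\dag$ agrees a.e.\ with its zero-extension to $\R^n$, hence lies in $\tH^{1,\mathrm{loc}}(\Omega_\dag)$; equivalently $P_\dag(\sigma u^t)=0$ where $P_\dag$ is the projection \eqref{eq:Porth} for $\Omega_\dag$. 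Granting this, $u$ solves the scattering problem for $\Gamma_\dag$, and by Assumption~\ref{ass:Uniqueness} for $\Gamma_\dag$ plus Theorem~\ref{thm:equivalence} this solution is unique and equals $\cA\phi_\dag$ with $\phi_\dag=A_\dag^{-1}g_\dag$. Since $g$ depends only on $u^i$ and $\sigma\in C^\infty_{0,\Gamma_\dag}\subset C^\infty_{0,\Gamma}$ (one checks $g_\dag=P_\dag(\sigma u^i)$ agrees with the relevant projection of $g$), uniqueness of the density in Theorem~\ref{thm:equivalence} forces $\phi_\dag=\phi$, and hence $u_\dag=u$. Finally $\phi_\dag=\phi\in H^{-1}_{\partial\Gamma}$ as claimed.

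The main obstacle I anticipate is the careful bookkeeping of the Sobolev-space membership $u^t\in\tH^{1,\mathrm{loc}}(\Omega_\dag)$, i.e.\ showing that enlarging the scatterer's complement from $\Omega$ to $\Omega_\dag$ does not break the zero-trace condition. This hinges on the precise statement that $u^t$ (zero-extended) vanishes not just a.e.\ on $\Gamma$ but genuinely in the sense that $\sigma u^t\in\tH^1(\Omega_\dag)$ — which I expect to handle via the characterisation in \cite[Cor.~3.4]{ChaHewMoi:13} / the argument already used in the proof of Theorem~\ref{thm:equivalence} (that if $\sigma w\in\tH^1(\Omega)$ then $w=0$ a.e.\ on $\Gamma$), run in reverse together with the fact that $\tH^1_E$ depends only on $E$ up to sets of zero $H^1$-capacity, and that $\Gamma\setminus\Gamma_\dag\subset\Gamma^\circ$ where $u^t$ is classically zero. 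A secondary subtlety is confirming that the datum $g$ is genuinely the same element of the (a priori different) dual spaces $\tH^1(\Omega)^\perp$ and $\tH^1(\Omega_\dag)^\perp$ when tested against densities in $H^{-1}_{\Gamma_\dag}$; this follows because the sesquilinear form \eqref{eqn:Sesqui} and the right-hand side \eqref{eqn:VariationalCts} are defined by the same integral/duality expressions regardless of which ambient scatterer one nominally works with, once the test density is restricted to $H^{-1}_{\Gamma_\dag}$.
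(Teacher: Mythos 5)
Your proposal is correct and takes essentially the same route as the paper's proof: $\phi=0$ on $\Gamma^\circ$ because $0=(\Delta+k^2)u^t=-\phi+(\Delta+k^2)u^i$ there, and then, since $\Omega\subset\Omega_\dag\subset\Omega\cup\Gamma^\circ$, the field $u$ also solves the scattering problem for $\Gamma_\dag$, whence uniqueness gives $u_\dag=u$ and $\phi_\dag=\phi$ (the paper deduces $u=u_\dag$ first, from uniqueness of the BVP, and then $\phi=\phi_\dag$ by applying $\Delta+k^2$ to $\cA(\phi-\phi_\dag)=0$; you instead go via uniqueness of the IE solution, which is an equivalent ordering). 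Two small caveats: the ``cleaner route'' via Remark~\ref{rem:support} that you say you would prefer is circular, since that remark is stated as a consequence of Proposition~\ref{prop:components2}, so keep your direct argument; and the ``main obstacle'' you anticipate is not one, because $u^t\in\tH^{1,{\rm loc}}(\Omega)\subset\tH^{1,{\rm loc}}(\Omega_\dag)$ follows immediately from $C_0^\infty(\Omega)\subset C_0^\infty(\Omega_\dag)$ and the definition of $\tH^1$ as a closure.
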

\begin{proof}
Since $\mathcal{A}\phi+u^i=u+u^i=u^t\in \tH^{1,{\rm loc}}(\Omega)$, 
\rev{it holds in $\Gamma^\circ$ that}
\begin{equation} \label{eq:phizero}
0 = (\Delta+k^2)u^t= (\Delta +k^2)\mathcal{A}\phi +  (\Delta +k^2)u^i=-\phi, %
\end{equation}
by \eqref{eq:NewtonHelmholtz}, and since $u^i$ satisfies \eqref{eqn:HE} in a neighbourhood of $\Gamma$. Thus $\phi\in H_{\partial \Gamma}^{-1}$.
Further, $u^t\in \tH^{1,{\rm loc}}(\Omega)$ implies that $u^t=0$, so $u=-u^i$, in $\Gamma^\circ$. Since $\Omega\subset \Omega_\dag\subset \Omega \cup \Gamma^\circ$, where $\Omega_\dag:=\Gamma_\dag^c$, and $u^i$ satisfies \eqref{eqn:HE} in a neighbourhood of $\Gamma$, it follows that $u^t\in \tH^{1,{\rm loc}}(\Omega_\dag)$ and that $u$ satisfies \eqref{eqn:HE} in $\Omega_\dag$. Since $u$ also satisfies \eqref{eqn:SRC} it follows from Assumption \ref{ass:Uniqueness} for $\Gamma_\dag$ that $u_\dag=u$. Since $u-u_\dag = \mathcal{A}(\phi-\phi_\dag)$, it follows from \eqref{eq:NewtonHelmholtz} that $\phi=\phi_\dag$. 
\end{proof}

\begin{rem}
\label{rem:support}
\rev{Proposition \ref{prop:components2} implies that if, for a given $k>0$, Assumption \ref{ass:Uniqueness} holds for a scatterer $\Gamma$ for which $\Omega:=\Gamma^c$ is not connected, then the scattered field and the IE solution $\phi$ for 
the scatterer 
$\Gamma$ coincide with those for the scatterer $\Omega_+^c$, where $\Omega_+$ is the unbounded component of $\Gamma^c$, and $\phi$ is supported in $\partial\Omega_+$.}
\end{rem}

\begin{rem}[\bf Alternative IEs for the same scattering problem] \label{rem:Alt}
Consider the case where $\Omega:=\Gamma^c$ is connected and $\Gamma^\circ$ is non-empty (e.g., as in Figure \ref{fig:2Dexs}(a) and (g)). By Proposition \ref{prop:components2} and Remark \ref{rem:support}, to solve the scattering problem for $\Gamma$ we can solve the IE on $\Gamma_\dag$, for any compact $\Gamma_\dag$ with $\partial \Gamma \subset \Gamma_\dag \subset \Gamma$ provided $k^2$ is not a Dirichlet eigenvalue of $-\Delta$ in $\Omega_-:= \Gamma \setminus \Gamma_\dag$, in particular if $0<k<k_0$ where $k_0^2$ is the smallest such eigenvalue. Recall from Theorem \ref{thm:equivalence} that \rev{satisfying} the IE on $\Gamma_\dag$ is equivalent to requiring that $u^t=\cA\phi+u^i\in \tH^1(\Gamma_\dag^c)$, i.e.\ to enforcing $\cA\phi=-u^i$ on $\Gamma_\dag$.

Since $\phi\in H_{\partial \Gamma}^{-1}$, the  choice $\Gamma_\dag=\partial \Gamma$ is natural; with this choice the IE enforces $\cA\phi=-u^i$ on $\partial \Gamma$. When $\Omega$ is a Lipschitz open set this corresponds, as we discuss in Remark \ref{rem:2nd} below, to the standard single-layer-potential boundary IE (BIE) formulation. But it may be attractive to choose a larger $\Gamma_\dag$, so that $\cA\phi = - u^i$ is enforced not just on $\partial \Gamma$ but also at points in $\Gamma^\circ$. This reduces the size of $\Omega_-$ and so increases $k_0$, 
and hence the interval $(0,k_0)$ in which the IE is uniquely solvable. (This is the rationale behind the CHIEF method and its variants for removing irregular frequencies of BIEs, e.g., \cite{Schenck:68}, \cite{WuSe:91}.) For the largest choice,  $\Gamma_\dag=\Gamma$, $\cA\phi=-u^i$ is enforced on the whole of $\Gamma$, and the IE is uniquely solvable for all $k>0$, but at the cost in  computation of discretising the whole of $\Gamma$ rather than $\partial \Gamma$ or some intermediate set. 

We explore this further in \S\ref{sec:Numerics}, where we compare computations for the choices $\Gamma_\dag=\Gamma$ and $\Gamma_\dag=\partial\Gamma$ for the particular example of the Koch snowflake (Figure \ref{fig:2Dexs}(g)) - see the discussion around Figure \ref{fig:KochSnowflake} below.
\end{rem}

\rev{The variational formulation}  \eqref{eqn:VariationalCts} 
will be the starting point for our Galerkin discretisation in \S\ref{sec:HausdorffIEM}.
Having computed $\phi$ by solving a Galerkin discretisation of \eqref{eqn:VariationalCts}, we will evaluate $u(x)$ at points $x\in \Omega$ using the formulas \eqref{eq:Rep}/\eqref{eq:NPdef}. We will also compute the far-field pattern $u^\infty\in C^\infty(\IS^{n-1})$, which satisfies
(see, e.g., \cite[Eqn.~(2.23)]{ChGrLaSp:11}, \cite[p.~294]{McLean})
$$
u(x) = \frac{\re^{\ri k|x|}}{|x|^{(n-1)/2}}\left(u^\infty(\hat x)+ O(|x|^{-1})\right), \quad \mbox{as} \quad |x|\to \infty,
$$
uniformly in $\hat x:= x/|x|$. Explicitly (\cite[Eqn.~(2.23)]{ChGrLaSp:11}, \cite[p.~294]{McLean}),
\begin{equation} \label{eq:ffpattern}
u^\infty(\hat x) = \langle \sigma \Phi^\infty(\hat x,\cdot), \overline{\phi}\rangle_{H^{1}(\R^n)\times H^{-1}(\R^n)}, \quad \hat x\in \IS^{n-1},
\end{equation}
where $\sigma$ is any element of $C^\infty_{0,\Gamma}$ and $\Phi^\infty(\cdot,y)$ is the far-field pattern of $\Phi(\cdot,y)$, for $y\in \R^{n}$, viz.
\begin{equation} \label{eq:FFps}
\Phi^\infty(\hat x,y):= \frac{\ri k^{(n-3)/2}}{2(2\pi \ri)^{(n-1)/2}}\, \exp(-\ri k\hat x\cdot y), \quad \hat x\in \IS^{n-1}, \; y\in \R^{n}.
\end{equation}

\subsection{The \rev{integral equation} on \texorpdfstring{$d$}{d}-sets in trace spaces} \label{sec:IEdSet}

\rev{So far, our analysis has been for general compact scatterers $\Gamma\subset\R^n$. 
We now assume additionally that $\Gamma$ is a $d$-set (in the sense of \eqref{eq:dset}), and that $n-2<d\leq n$, so we have a non-trivial scattered field (see Remark \ref{rem:cap}). 
In this case 
one can  
view the operator $A$ as an integral operator with respect to Hausdorff measure $\cH^d$, by reinterpreting $A$ as a map between certain ``trace spaces'' on $\Gamma$. This will allow us to relate our IE \eqref{eq:IE} to previously studied IE formulations in certain special cases (see Remark \ref{rem:2nd}), and will pave the way for the discretization we consider in \S\ref{sec:HausdorffIEM}. 
We begin by briefly recalling the definition of trace spaces on $d$-sets, %
and the relationship between them and function spaces on $\R^n$. 
For a more detailed explanation 
see \cite[\S2.4]{HausdorffBEM}. 
Let $\Gamma\subset\R^n$ be a $d$-set for some \rev{$n-2<d\leq n$}.
\footnote{\rev{While our focus here is on $n=2,3$ and $n-2<d\leq n$, we note that the definitions and results from the current paragraph onwards, up to and including Lemma \ref{lem:density}, all extend to general $n\in\N$ and $0<d\leq n$. For details see, e.g., \cite[\S2.4]{HausdorffBEM}.}} 
We denote by $\IL_2(\Gamma)$ the Hilbert space of %
functions on $\Gamma$ that are \rev{measurable and} square integrable with respect to
$\cH^d|_\Gamma$, normed by
$\|f\|_{\IL_2(\Gamma)} := (\int_\Gamma |f(x)|^2\,\rd \cH^d(x))^{1/2}$, and by $\IL_\infty(\Gamma)$ the Banach space of 
functions on $\Gamma$ 
that are \rev{measurable and} essentially bounded with respect to $\mathcal{H}^d|_\Gamma$, normed by $\|f\|_{\IL_\infty(\Gamma)} := {\rm{ess}}\,\sup_{x\in \Gamma} |f(x)|$.

Let $\tr:C_0^\infty(\R^n)\to \IL_2(\Gamma)$ be the trace (or restriction) operator, with dense range, defined by $\tr \varphi=\varphi|_\Gamma\in \IL_2(\Gamma)$, for $\varphi\in C_0^\infty(\R^n)$.
For $s>\frac{n-d}{2}$, this extends to a continuous linear operator
\begin{align}
\label{def:tr}
\tr:H^{s}(\R^n) \to \IL_2(\Gamma),
\end{align}
also with dense range (see \cite[Thm~18.6]{Triebel97FracSpec} and \cite[\S2.4]{HausdorffBEM}).
Setting
\begin{equation} \label{eq:st}
t:=s-\frac{n-d}{2} >0,
\end{equation}
we define the trace space $\IH^{t}(\Gamma):=\tr(H^{s}(\R^n))\subset \IL_2(\Gamma)$, and equip it with the quotient norm
\[ \|f\|_{\IH^t(\Gamma)} := \inf_{\substack{\varphi\in H^s\Rn\\ \tr \varphi=f}}\|\varphi\|_{H^s\Rn}.\]
This makes $\IH^t(\Gamma)$ a Hilbert space unitarily isomorphic to the quotient space $H^{s}(\R^n)/\ker(\tr)$. 
For $t>0$ we denote by $\IH^{-t}(\Gamma)$ the dual space $(\IH^{t}(\Gamma))^*$. Identifying $\IL_2(\Gamma)$ with its dual in the standard way, and with $\IH^0(\Gamma):=\IL_2(\Gamma)$, we have that
$\IH^{t^\prime}(\Gamma)$ is continuously embedded in $\IH^{t}(\Gamma)$ with dense image for any $t,t^\prime\in\R$ with $t^\prime>t$,
and if
$g\in\IH^t(\Gamma)$ for some $t\geq 0$ and $f\in\IL_2(\Gamma)$ then
\begin{align}
\label{eq:L2dualequiv}
\langle f,g\rangle_{\IH^{-t}(\Gamma) \times \IH^{t}(\Gamma)}=(f,g)_{\IL_2(\Gamma)}.
\end{align}

Assuming \eqref{eq:st}, $\tr:H^{s}(\R^n) \to \IH^t(\Gamma)$ is a continuous linear surjection that
has unit norm and is a unitary isomorphism from $\ker(\tr)^\perp$ to $\IH^t(\Gamma)$. 
Furthermore, $\tH^s(\Gamma^c)\subset \ker(\tr)$. 
Accordingly, \rev{again assuming \eqref{eq:st},} its adjoint
\begin{align}
\label{def:trstar}
\tr^*: \IH^{-t}(\Gamma) \to H^{-s}(\R^n),
\end{align}
is a continuous linear isometry with range contained in $H^{-s}_\Gamma=(\tH^s(\Gamma^c)^\perp)^*$, satisfying
\begin{equation} \label{eq:tracedual}
 \langle \varphi,\tr^* f \rangle_{H^{s}(\R^n) \times H^{-s}(\R^n)} = \langle \tr \varphi,f\rangle_{\IH^{t}(\Gamma)\times \IH^{-t}(\Gamma)},
\qquad f\in\IH^{-t}(\Gamma),\,\varphi\in H^s(\R^n).
\end{equation}
In particular, when $f\in \IL_2(\Gamma)$ we have that
\begin{align}
\label{eq:L2dualrep}
\langle \varphi,\tr^* f \rangle_{H^{s}(\R^n) \times H^{-s}(\R^n)} =(\tr \varphi,f)_{\IL_2(\Gamma)}.
\end{align}

The following theorem is a slight 
\rev{modification} 
of results presented in \cite{caetano2019density} and \cite{HausdorffBEM}.
}
\begin{thm}
\label{thm:density}
\rev{
Let $\Gamma\subset \R^n$ be a $d$-set for some \rev{$n-2<d\leq n$}. If $\frac{n-d}{2}<s<\frac{n-d}{2}+1$, so that $t:=s-\frac{n-d}2\in(0,1)$, then for $\tr:H^s(\R^n)\to \IH^t(\Gamma)$ it holds that $\ker(\tr)=\tH^{s}(\Gamma^c)$. 

\noindent 
Hence $\tr|_{\tH^{s}(\Gamma^c)^\perp}:\tH^{s}(\Gamma^c)^\perp \to \IH^{t}(\Gamma)$ is a unitary isomorphism, 
the range of $\tr^*$ is equal to $H^{-s}_\Gamma$, the map
$\tr^*:\IH^{-t}\GG\to H^{-s}_\Gamma$ is a unitary isomorphism, and $\tr^*(\IL_2(\Gamma))$ is dense in $H^{-s}_\Gamma$. 

\noindent 
If $d=n$ (in which case $t=s$) then the above statements hold also for the limiting \rev{case} $s=t=0$.}
\end{thm}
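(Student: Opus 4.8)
The plan is to reduce the entire theorem to the single identity $\ker(\tr)=\tH^{s}(\Gamma^c)$ for $\tr$ regarded as a map into $\IL_2(\Gamma)$ (equivalently into $\IH^t(\Gamma)$, since these operators have the same kernel). The inclusion $\tH^{s}(\Gamma^c)\subseteq\ker(\tr)$ is already recorded in the discussion preceding the theorem, so the substantive step is the reverse inclusion; once that is in hand, every remaining assertion follows by routine functional-analytic manipulation of facts assembled in \S\ref{sec:FunctionSpaces} and in the paragraphs just before the theorem. The limiting case $d=n$, $s=t=0$ is treated separately and more elementarily.

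For the reverse inclusion I would invoke the density results of \cite{caetano2019density} and \cite{HausdorffBEM}, specialised to the present situation: for a $d$-set $\Gamma\subset\R^n$ with $n-2<d\le n$ and $t=s-\tfrac{n-d}{2}\in(0,1)$, any $\varphi\in H^s(\R^n)$ with $\tr\varphi=0$ lies in $\tH^{s}(\Gamma^c)$, i.e.\ is an $H^s(\R^n)$-limit of functions in $C_0^\infty(\Gamma^c)$. Those results are phrased for a $d$-set sitting inside its own ambient Euclidean space, so the only adaptation needed is to take that ambient space to be $\R^n$ (rather than, say, the hyperplane $\R^{n-1}$ relevant to the screen setting of \cite{HausdorffBEM}); note that $n-2<d$ is precisely what makes $\tfrac{n-d}{2}<1$, hence the interval of admissible $s$ non-empty. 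I expect this density statement to be the only genuinely non-routine ingredient of the proof. It is also the reason the hypothesis $t<1$ cannot be dropped: once $s\ge\tfrac{n-d}{2}+1$, the space $H^{-s}_\Gamma$ contains distributions supported on $\Gamma$ of positive order, which a function vanishing $\cH^d$-a.e.\ on $\Gamma$ need not annihilate, so $\ker(\tr)$ becomes strictly larger than $\tH^s(\Gamma^c)$.

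Granting $\ker(\tr)=\tH^{s}(\Gamma^c)$, here is how I would conclude. First, $\ker(\tr)^\perp=\tH^s(\Gamma^c)^\perp$, and — as already noted assuming only \eqref{eq:st} — $\tr$ restricts to a unitary isomorphism from $\ker(\tr)^\perp$ onto $\IH^t(\Gamma)$, which is the first claim. Next, since $\IH^t(\Gamma)=\tr(H^s(\R^n))$, the operator $\tr:H^s(\R^n)\to\IH^t(\Gamma)$ is surjective; hence the pre-annihilator in $H^s(\R^n)$ of the range of $\tr^*$ is exactly $\{\varphi:\tr\varphi=0\}=\ker(\tr)=\tH^s(\Gamma^c)$. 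As $\tr^*$ is an isometry (already established before the theorem), its range is closed, so by the bipolar theorem it equals the annihilator of $\tH^s(\Gamma^c)$ in $H^{-s}(\R^n)$, which is precisely $H^{-s}_\Gamma$ (an element of $H^{-s}(\R^n)$ has support in $\Gamma$ iff it annihilates $C_0^\infty(\Gamma^c)$, hence, by density and continuity of the pairing, iff it annihilates $\tH^s(\Gamma^c)$). Thus $\tr^*:\IH^{-t}(\Gamma)\to H^{-s}_\Gamma$ is a surjective isometry, i.e.\ a unitary isomorphism, which gives the second and third claims. Finally, $\IL_2(\Gamma)=\IH^0(\Gamma)$ is dense in $\IH^{-t}(\Gamma)$ (since $0>-t$), and a continuous surjection carries dense subsets to dense subsets, so $\tr^*(\IL_2(\Gamma))$ is dense in $H^{-s}_\Gamma$.

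For the case $d=n$, $s=t=0$: now $\cH^d$ is Lebesgue measure, $\IL_2(\Gamma)=L_2(\Gamma)$, and $\tr:L_2(\R^n)\to L_2(\Gamma)$ is simply restriction, so its kernel is $\{v\in L_2(\R^n): v=0\text{ a.e.\ on }\Gamma\}=L_2(\Gamma^c)=\tH^0(\Gamma^c)$, the last equality holding because $C_0^\infty(\Gamma^c)$ is dense in $L_2$ of the open set $\Gamma^c$. With this identification of $\ker(\tr)$ the argument of the previous paragraph goes through unchanged (here $\tH^0(\Gamma^c)^\perp=L_2(\Gamma)=H^0_\Gamma$, $\tr$ acts as the identity on $L_2(\Gamma)$, and $\tr^*$ is extension by zero), yielding the remaining statements in this case too. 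The main obstacle throughout is the density input of the second paragraph; everything else is bookkeeping.
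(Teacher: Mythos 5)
Your proposal is correct and follows essentially the same route as the paper: the one non-routine ingredient, the identity $\ker(\tr)=\tH^{s}(\Gamma^c)$, is imported from the same sources (\cite{caetano2019density} and \cite{HausdorffBEM}), and the remaining claims are the functional-analytic consequences that the paper leaves implicit in the word ``Hence'' and in the pre-theorem discussion of $\tr$ and $\tr^*$. For the limiting case $d=n$, $s=t=0$ you give a direct density argument where the paper cites \cite[Lem.~3.16]{ChaHewMoi:13}, but the content is identical.
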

\begin{proof}
\rev{
For the statements for $\frac{n-d}{2}<s<\frac{n-d}{2}+1$ see \cite[Prop.~6.7, Thm~6.13]{caetano2019density} and \cite[Thm 2.7]{HausdorffBEM}). For the limiting case mentioned, we note that when $d=n$ we have $\IL_2(\Gamma)=L_2(\Gamma)$, and the trace map $\tr:H^s(\R^n)\to \IL_2(\Gamma)$ is continuous for all $s\geq 0$ and is given simply by $\tr\phi = \phi|_\Gamma$, so that $\ker(\tr)$ is the set of functions in $H^s(\R^d)$ that vanish almost everywhere on $\Gamma$ with respect to Lebesgue measure. For $s=0$, that $\ker(\tr)=\tH^{0}(\Gamma^c)$ is then a consequence of \cite[Lem.~3.16]{ChaHewMoi:13}.} 
\end{proof}

\begin{rem}[\bf Connection to known cases: I] \label{rem:conI}
The spaces $\IH^t(\Gamma)$ introduced above can be related to well-known trace spaces in special cases. For instance:
\begin{enumerate}
\item[(a)] \label{item:one}
If $\Gamma$ is the closure of a bounded Lipschitz open set $D$ (e.g.\ Figure \ref{fig:2Dexs}(a)), then \rev{$\Gamma$ is a $d$-set with $d=n$, so that $t=s$, and $\IH^t(\Gamma)$ coincides with the restriction space $H^t(D)=\{U|_\Gamma:U\in H^t(\R^n)\}$ for $t\geq 0$  
(this follows from the fact that $\tr$ coincides with the restriction operator $|_\Gamma$ in this case, as noted in the proof of Theorem \ref{thm:density} above).}   
Hence, for $t<0$, $\IH^t(\Gamma)$ is unitarily isomorphic to 
the space $\tH^{t}(D)=(H^{-t}(D))^*$ \rev{(see, e.g.,\ \cite[Thm 3.30]{McLean})}.
\item[(b)]
If $\Gamma$ is the boundary of a bounded Lipschitz open set (e.g.\ Figure \ref{fig:2Dexs}(b)), then \rev{$\Gamma$ is a $d$-set with} $d=n-1$ and $\IH^t(\Gamma)$ coincides, for $|t|\leq 1/2$, with the boundary Sobolev space $H^t(\Gamma)$, as defined, e.g., in \cite[pp.~98--99]{McLean}; see \cite[Rem.~6.5]{caetano2019density} for details.

\item[(c)]  
A further example of a family of $d$-sets with $d=n-1$ is provided by the ``multi-screens'' defined in Definition 2.3 of \cite{ClHi:13}; these are finite unions of Lipschitz subsets of the boundaries of bounded Lipschitz open sets, a specific example being given in Figure \ref{fig:2Dexs}\rev{(d)}. %
In \cite{ClHi:13}, trace spaces on multi-screens are defined as quotient spaces. %
Specifically, in the parlance of multi-screen theory, the space $H^{1/2}([\Gamma]) := H^1(\mathbb{R}^n)/\widetilde{H}^1(\Gamma^c)$
is referred to as the ``Dirichlet single-trace space'' (see \cite[Defn 6.1]{ClHi:13}).  
It is unitarily isomorphic to the
space $\mathbb{H}^{1/2}(\Gamma)$.
The dual space $\tH^{-1/2}([\Gamma]) := (H^{1/2}([\Gamma]))^*$ is
referred to as the ``Neumann jump space'' (see \cite[Defn 6.4]{ClHi:13}). It is unitarily isomorphic to $\mathbb{H}^{-1/2}(\Gamma)= (\mathbb{H}^{1/2}(\Gamma))^*$, 
which, \rev{by Theorem} \ref{thm:density}, is unitarily isomorphic to $H^{-1}_{\Gamma}$.
\end{enumerate}
\end{rem}

\rev{In \cite{HausdorffBEM} we studied scattering by planar screens $\Gamma \subset \Gamma_{\infty}:=\mathbb{R}^{n-1}\times\{0\}$ (see Figure \ref{fig:2Dexs}(c) and (e)). There we defined trace spaces on $\Gamma$ by a two-step process, first taking a trace onto the hyperplane $\Gamma_\infty$, then applying the above trace results in $\R^{n-1}$. 
The next result, which is stated without proof since it follows trivially from standard trace mapping properties (see e.g.~\cite[Lem.~3.35]{McLean}), allows us to avoid this complication, and instead treat a planar screen as any other compact set.} 

\begin{lem}[\bf Planar screens]\label{ConnectionPlanarScreens}
\rev{Suppose that $\Gamma = \widetilde{\Gamma}\times\{0\}\subset
  \Gamma_{\infty}:=\mathbb{R}^{n-1}\times\{0\}$ where
  $\widetilde{\Gamma}\subset \mathbb{R}^{n-1}$ is a $d$-set in $\mathbb{R}^{n-1}$ for some \rev{$n-2<d\leq n-1$}. 
\rev{Then $\Gamma$ is a $d$-set in $\R^n$.} 
Further, noting from \eqref{eq:st} that $t = s-(n-d)/2 = (s-1/2) - (n-1-d)/2$, \ for $s>(n-d)/2$ we have \rev{one continuous trace operator} ${\rm
    tr}_{\widetilde\Gamma}:H^{s-1/2}(\R^{n-1})\to \IL_2(\widetilde
  \Gamma)$ with range $\IH^t(\widetilde\Gamma)$, and another \rev{continuous trace operator}
  \mbox{$\tr:H^{s}(\R^{n})\to \IL_2(\Gamma)$} with range $\IH^t(\Gamma)$.
The spaces  $\IH^t(\Gamma)$ and $\IH^t(\widetilde\Gamma)$ coincide up to identification by
  the map $f\mapsto f(\cdot,0)$, and the traces satisfy
  $\tr = {\rm tr}_{\widetilde\Gamma}\circ\gamma$
  where  $\gamma:H^{s}(\R^{n})\to H^{s-1/2}(\R^{n-1})$ is the standard (surjective) trace 
  operator.}

\end{lem}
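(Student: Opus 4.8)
The plan is to read off all three assertions from the trace and density facts recalled in \S\ref{sec:IEdSet} — in particular the continuous, dense-range trace map \eqref{def:tr} — applied once in $\R^n$ and once in $\R^{n-1}$, together with the elementary mapping properties of the standard half-space trace $\gamma\colon H^{s}(\R^n)\to H^{s-1/2}(\R^{n-1})$.

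First I would check that $\Gamma=\widetilde\Gamma\times\{0\}$ is a $d$-set in $\R^n$: the inclusion $\R^{n-1}\cong\R^{n-1}\times\{0\}\hookrightarrow\R^n$ is an isometry, the Hausdorff measure $\cH^d$ (with the normalisation of \cite{EvansGariepy}) depends only on the metric and hence is preserved under isometric embeddings, and $B_r(x)\cap\Gamma_\infty=B_r^{n-1}(\tilde x)\times\{0\}$ for $x=(\tilde x,0)$; so $\cH^d(\Gamma\cap B_r(x))=\cH^d(\widetilde\Gamma\cap B_r^{n-1}(\tilde x))$ and $\Gamma$ inherits \eqref{eq:dset} from $\widetilde\Gamma$ with the same constants, while $f\mapsto f(\cdot,0)$ becomes an isometric identification of $\IL_2(\widetilde\Gamma)$ with $\IL_2(\Gamma)$. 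Since $n-2<d\le n-1$ forces $(n-d)/2\ge 1/2$, the hypothesis $s>(n-d)/2$ in particular gives $s>1/2$, so $\gamma$ is a well-defined continuous surjection (e.g.\ \cite[Lem.~3.35]{McLean}). I would then apply \eqref{def:tr} twice: to the $d$-set $\Gamma\subset\R^n$ with smoothness $s$, giving $\tr\colon H^{s}(\R^n)\to\IL_2(\Gamma)$ with range $\IH^t(\Gamma)$, $t=s-(n-d)/2$; and to the $d$-set $\widetilde\Gamma\subset\R^{n-1}$ with smoothness $s-1/2$ — noting that $s-1/2>(n-1-d)/2$ is equivalent to $s>(n-d)/2$, and that $(s-1/2)-(n-1-d)/2=t$ — giving $\mathrm{tr}_{\widetilde\Gamma}\colon H^{s-1/2}(\R^{n-1})\to\IL_2(\widetilde\Gamma)$ with range $\IH^t(\widetilde\Gamma)$.

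It then remains to connect the two traces. For $\varphi\in C_0^\infty(\R^n)$ one has $\gamma\varphi=\varphi(\cdot,0)\in C_0^\infty(\R^{n-1})$, hence, under the identification $\IL_2(\Gamma)=\IL_2(\widetilde\Gamma)$, $\mathrm{tr}_{\widetilde\Gamma}(\gamma\varphi)=\varphi|_\Gamma=\tr\varphi$; by density of $C_0^\infty(\R^n)$ in $H^{s}(\R^n)$ and continuity of $\tr$, $\mathrm{tr}_{\widetilde\Gamma}$ and $\gamma$, this extends to $\tr=\mathrm{tr}_{\widetilde\Gamma}\circ\gamma$ on $H^{s}(\R^n)$. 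Passing to ranges and using surjectivity of $\gamma$ gives $\IH^t(\Gamma)=\IH^t(\widetilde\Gamma)$ as subsets of $\IL_2(\Gamma)=\IL_2(\widetilde\Gamma)$; and, since $\ker\gamma\subset\ker\tr$ with $\ker\tr=\gamma^{-1}(\ker\mathrm{tr}_{\widetilde\Gamma})$, the map $\gamma$ descends to a bijection $H^{s}(\R^n)/\ker\tr\to H^{s-1/2}(\R^{n-1})/\ker\mathrm{tr}_{\widetilde\Gamma}$, which, combined with the quotient-space descriptions of $\IH^t(\Gamma)$ and $\IH^t(\widetilde\Gamma)$, identifies the two spaces as Hilbert spaces via $f\mapsto f(\cdot,0)$. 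I expect the only ingredient that is not pure bookkeeping — and the reason the lemma can be called ``trivial'' — to be precisely that $\gamma$ is not just bounded but a metric surjection (equivalently, admits a bounded linear right inverse), which is what upgrades the last bijection to a topological, indeed (up to the usual normalisation constant) unitary, isomorphism; the rest is the index arithmetic in \eqref{eq:st} and isometry-invariance of Hausdorff measure.
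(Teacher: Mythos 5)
Your proof is correct and fills in exactly the argument the authors intended: the paper states this lemma \emph{without proof}, remarking only that it ``follows trivially from standard trace mapping properties (see e.g.\ \cite[Lem.~3.35]{McLean})''. Your chain of steps --- isometry-invariance of $\cH^d$ to transfer \eqref{eq:dset} from $\widetilde\Gamma$ to $\Gamma$, the index arithmetic in \eqref{eq:st} to line up the two applications of \eqref{def:tr}, verification of $\tr={\rm tr}_{\widetilde\Gamma}\circ\gamma$ on $C_0^\infty(\R^n)$ followed by density and continuity, and surjectivity (indeed metric surjectivity, up to a normalisation constant) of $\gamma$ to identify the ranges and quotient norms --- is precisely that routine verification, correctly carried out.
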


\rev{Of central importance is the case $s=1$, as our scattering problem is posed in $H^{1,{\rm loc}}(\R^n)$. Hence we give the value of $t$ given by \eqref{eq:st} for $s=1$ its own notation, defining (for $n-2<d\leq n$)
\begin{align}
\label{eq:tdDef}
t_d:= 1-\frac{n-d}{2} \in(0,1].
\end{align} 
The case $s=1$ ($t=t_d$) is covered by Theorem \ref{thm:density} for $n-2<d<n$, but not for $d=n$. It is not known to us whether $\ker(\tr)=\tH^1(\Gamma^c)$ for general $n$-sets $\Gamma$, although we know it to hold in many cases (see Remark \ref{rem:TildeCirc}). For convenience we introduce this as an assumption, with which assumption Corollary \ref{cor:td} below is an immediate consequence of Theorem \ref{thm:density}. }

\rev{
\begin{ass}
\label{ass:TildeCirc}
$\Gamma\subset\R^n$ is a 
$d$-set with either (i) $n-2<d<n$ or (ii) $d=n$ and $\ker(\tr)=\tH^1(\Gamma^c)$.
\end{ass}

\begin{cor}
\label{cor:td}
Suppose that Assumption \ref{ass:TildeCirc} holds. Then, 
\rev{with $t_d$ defined by \eqref{eq:tdDef},} 
$\tr:\tH^{1}(\Gamma^c)^\perp \to \IH^{t_d}(\Gamma)$ and $\tr^*:\IH^{-t_d}(\Gamma)\to H^{-1}_\Gamma$ are unitary isomorphisms and $\tr^*(\IL_2(\Gamma))$ is dense in $H^{-1}_\Gamma$. 
\end{cor}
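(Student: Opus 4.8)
The plan is to deduce Corollary \ref{cor:td} directly from Theorem \ref{thm:density}, since Assumption \ref{ass:TildeCirc} is precisely what is needed to fill the gap between the two. First I would observe that with $s=1$ the parameter $t$ defined in \eqref{eq:st} becomes exactly $t_d = 1-(n-d)/2$, which lies in $(0,1]$ for $n-2<d\leq n$; moreover $t_d\in(0,1)$ precisely when $n-2<d<n$, while $t_d=1$ when $d=n$. Thus the hypothesis $\frac{n-d}{2}<s<\frac{n-d}{2}+1$ of Theorem \ref{thm:density} holds for $s=1$ exactly in case (i) of Assumption \ref{ass:TildeCirc}, namely $n-2<d<n$. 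In that case the conclusions of the corollary---that $\tr|_{\tH^1(\Gamma^c)^\perp}:\tH^1(\Gamma^c)^\perp\to\IH^{t_d}(\Gamma)$ and $\tr^*:\IH^{-t_d}(\Gamma)\to H^{-1}_\Gamma$ are unitary isomorphisms, and that $\tr^*(\IL_2(\Gamma))$ is dense in $H^{-1}_\Gamma$---are an immediate instance of Theorem \ref{thm:density} with $s=1$, $t=t_d$.

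Next I would handle case (ii), $d=n$ with $\ker(\tr)=\tH^1(\Gamma^c)$. Here $t_d=1$, which is outside the open range of $s$ in Theorem \ref{thm:density}, so the first part of that theorem does not directly apply; this is why the identity $\ker(\tr)=\tH^1(\Gamma^c)$ must be assumed rather than derived. Given this identity, I would simply re-run the abstract argument that turns $\ker(\tr)=\tH^1(\Gamma^c)$ into the desired conclusions---this is exactly the mechanism used in the preliminary discussion preceding Theorem \ref{thm:density}. Concretely: since $\tr:H^1(\R^n)\to\IH^{t_d}(\Gamma)$ is a continuous surjection (by the definition of $\IH^{t_d}(\Gamma)$ as the range of $\tr$) which restricts to a unitary isomorphism from $\ker(\tr)^\perp$ onto $\IH^{t_d}(\Gamma)$, the hypothesis $\ker(\tr)=\tH^1(\Gamma^c)$ gives at once that $\tr:\tH^1(\Gamma^c)^\perp\to\IH^{t_d}(\Gamma)$ is a unitary isomorphism. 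Dualizing, $\tr^*:\IH^{-t_d}(\Gamma)\to (\tH^1(\Gamma^c)^\perp)^*$ is a unitary isomorphism, and $(\tH^1(\Gamma^c)^\perp)^*=H^{-1}_\Gamma$ by \cite[Cor.~3.4]{ChaHewMoi:13}. Finally, $\tr^*(\IL_2(\Gamma))$ is dense in $H^{-1}_\Gamma$ because $\IL_2(\Gamma)$ is dense in $\IH^{-t_d}(\Gamma)$ (the embedding $\IH^{t'}(\Gamma)\hookrightarrow\IH^{t}(\Gamma)$ is dense for $t'>t$, as recalled before Theorem \ref{thm:density}) and $\tr^*$ is an isometric isomorphism onto $H^{-1}_\Gamma$, hence carries dense sets to dense sets.

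There is essentially no serious obstacle: the corollary is by design an immediate consequence of Theorem \ref{thm:density} once Assumption \ref{ass:TildeCirc} is in force. The only point requiring a small amount of care is the bookkeeping at the endpoint $t_d=1$ (case $d=n$), where one cannot quote Theorem \ref{thm:density}'s first assertion but must instead invoke the assumed kernel identity and then repeat the short abstract duality/density argument sketched above; all the ingredients for this (the unitary isomorphism property of $\tr$ on $\ker(\tr)^\perp$, the duality $H^{-1}_\Gamma=(\tH^1(\Gamma^c)^\perp)^*$, and density of $\IL_2(\Gamma)$ in $\IH^{-t_d}(\Gamma)$) are already established in the excerpt. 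I would therefore keep the proof to a couple of sentences, citing Theorem \ref{thm:density} for case (i) and spelling out the one-line endpoint argument for case (ii).
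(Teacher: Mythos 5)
Your proposal is correct and follows exactly the route the paper intends: the paper gives no separate proof, stating only that the corollary is ``an immediate consequence of Theorem~\ref{thm:density}'' once Assumption~\ref{ass:TildeCirc} is in force, and your two cases (quoting Theorem~\ref{thm:density} with $s=1$ for $n-2<d<n$, and re-running the duality/density argument from the assumed identity $\ker(\tr)=\tH^1(\Gamma^c)$ when $d=n$, $t_d=1$) are precisely the details being elided. Your observation that the endpoint $t_d=1$ is the reason the kernel identity must be assumed rather than derived matches the paper's own motivation for introducing Assumption~\ref{ass:TildeCirc}.
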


\begin{rem}
\label{rem:TildeCirc}
\rev{In the case $d=n$,} a sufficient condition for $\ker(\tr)=\tH^1(\Gamma^c)$ 
is that $\tH^{1}(\Gamma^c)= H^1_{\overline{\Gamma^c}}$, since $\tH^{1}(\Gamma^c)\subset \ker(\tr) \subset H^1_{\overline{\Gamma^c}}$ (see, e.g.,\ \cite[Eqn \rev{(17)}]{ChaHewMoi:13}). Hence Assumption \ref{ass:TildeCirc} holds in the following cases: 
\begin{itemize}
\item [(i)] 
$\Gamma^c$ is $C^0$ \cite[Thm.~3.29]{McLean}, so in particular if $\Gamma$ is the closure of a Lipschitz open set; more generally, if 
$\Gamma^c$ is $C^0$ except at points in a closed, countable subset $P$ of $\partial(\Gamma^c)$ with at most finitely many limit points \cite[Thm.~3.24]{ChaHewMoi:13}; see, e.g., the examples in \cite[Fig.~4]{ChaHewMoi:13};
\item [(iii)] $\Gamma$ is an $n$-set OSC-IFS attractor (see \cite{CaChGiHe23}), e.g.\ the Koch snowflake in Figure \ref{fig:2Dexs}(g);
\item [(iv)] $\Gamma$ is the closure of one of the classical snowflake domains in \cite[\S5.1]{caetano2019density}.
\end{itemize}

\end{rem}
}

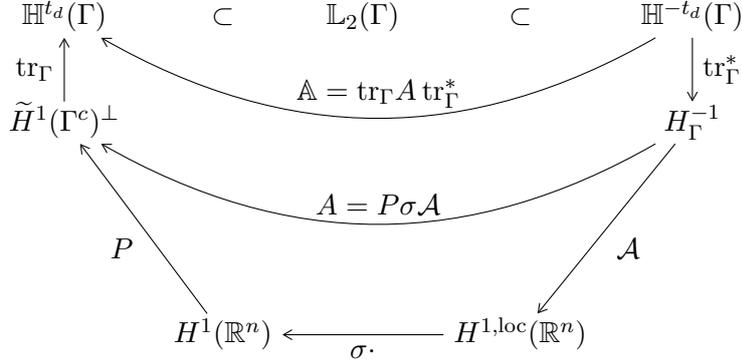
\begin{figure}[t]
\centering
\begin{tikzpicture}
\matrix[matrix of math nodes,
column sep={13pt},
row sep={40pt,between origins},
text height=1.5ex, text depth=0.25ex] (s)
{
|[name=Ht]| \IH^{t_d}\GG &
|[name=]| \subset &
|[name=L2]| \IL_2\GG &
|[name=]| \subset &
|[name=Hmt]| \IH^{-t_d}\GG
\\
|[name=Hscp]| \tH^1(\Gamma^c)^\perp&
& & &
|[name=HmsG]| H^{-1}_\Gamma&
\\
&&&&
\\
&
|[name=W]| H^{1}(\R^{n})
&&
|[name=Wloc]| H^{1,{\rm loc}}(\R^{n})
\\
};
\draw[->,>=angle 60] %
(Hscp) edge node[auto]{\(\tr\)} (Ht)
(Hmt) edge node[auto]{\(\tr^*\)} (HmsG)
(HmsG) edge node[auto]{\(\cA\)} (Wloc)
(Wloc) edge node[auto]{\(\sigma\cdot\)} (W)
(W) edge node[auto]{\(P\)} (Hscp)
(HmsG) edge[bend left=30] node[auto,swap]{\( A = P\sigma\cA\)} (Hscp)
(Hmt) edge[bend left=30] node[auto,swap]{\( \IA = \tr A\, \tr^*\)} (Ht)
;
\end{tikzpicture}
\caption{Schema of relevant function spaces and operators for $s=1$, $t=t_d:=1-\frac{n-d}2\rev{\in(0,1]}$.
The operators $\tr$ and $\tr^*$ are isometries, indeed unitary isomorphisms 
\rev{if Assumption \ref{ass:TildeCirc} holds}.
}
\label{fig:AndreaScheme}
\end{figure}

\rev{
Returning to %
our integral equation, 
we can now ``lift'' the potential $\cA:H^{-1}_\Gamma\to H^{1,{\rm loc}}(\R^n)$ and the operator $A:H^{-1}_\Gamma\to\tH^{1}(\Omega)^\perp$ to continuous maps on the trace spaces via the compositions
\begin{align}
\notag
\cA\tr^*:\IH^{-t_d}(\Gamma) \to H^{1,{\rm loc}}(\R^n)
\qquad \text{and}
\\
\label{eqn:IADef}
\IA:=\tr\, A\, \tr^*:\IH^{-t_d}\GG\to\IH^{t_d}\GG.
\end{align}
The next lemma collects some basic results about \rev{these compositions}. 
A schematic showing the relationships between the main function spaces and operators involved is given in Figure \ref{fig:AndreaScheme}.}

\rev{
\begin{lem}
\label{lem:IA}
Let $\Gamma$ be a compact $d$-set with $n-2<d\leq n$, and define $\IA$ as in \eqref{eqn:IADef}. Then:
\begin{itemize}
\item[(i)]
For arbitrary $\sigma \in C^\infty_{0,\Gamma}$,
\begin{equation} \label{eq:Arep}
\IA\bbx = \tr (\sigma\cA\tr^*\bbx), \quad \bbx \in \IH^{-t_d}(\Gamma);
\end{equation}
\item[(ii)]
The sesquilinear form on $\IH^{-t_d}\GG\times \IH^{-t_d}\GG$ associated with the operator $\IA$ satisfies
\begin{equation} \label{eqn:SesquiA}
\langle \IA \bbx,\bby\rangle_{\IH^{t_d}\GG\times\IH^{-t_d}\GG} = a(\tr^*\bbx,\tr^*\bby), \quad \bbx,\bby\in \IH^{-t_d}(\Gamma),
\end{equation}
with $a(\cdot,\cdot)$ as in \eqref{eqn:Sesqui}, 
and 
is continuous and compactly perturbed coercive;
\item[(iii)]
If 
Assumption \ref{ass:TildeCirc} holds, 
then 
problem \eqref{eq:IE}/\rf{eqn:VariationalCts} can be equivalently stated as: given $g\in \tH^{1}(\Omega)^\perp$, find $\bbx\in \IH^{-t_d}(\Gamma)$ such that
\begin{align}
\label{eq:IEnew}
\IA \Psi = \tr g,
\end{align}
or, equivalently,
\begin{equation} \label{eq:varformnew}
\langle \IA \bbx,\bby\rangle_{\IH^{t_d}\GG\times\IH^{-t_d}\GG}=\langle \tr g,\bby\rangle_{\IH^{t_d}\GG\times \IH^{-t_d}\GG},
\qquad \bby\in \IH^{-t_d}\GG,
\end{equation}
with the solutions of \eqref{eq:IE}/\rf{eqn:VariationalCts} and \eqref{eq:IEnew}/\eqref{eq:varformnew} related by $\phi = \tr^*\Psi$. 
If Assumption \ref{ass:Uniqueness} also holds, then $A:H_\Gamma^{-1}\to \tH^1(\Omega)^\perp$ and $\IA:\IH^{-t_d}(\Gamma)\to \IH^{t_d}(\Gamma)$ are both invertible.
\end{itemize}
\end{lem}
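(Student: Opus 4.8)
The plan is to establish (i), (ii), (iii) in turn, using the trace-space machinery from \S\ref{sec:IEdSet} and Corollary~\ref{cor:td}, and building each part on the previous one.

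For (i), I would start from the definitions $\IA = \tr\, A\,\tr^*$ in \eqref{eqn:IADef} and $A\phi = P(\sigma\cA\phi)$ in \eqref{eq:ADef}, the latter being independent of the choice of $\sigma\in C^\infty_{0,\Gamma}$, so that $\IA\bbx = \tr\,P(\sigma\cA\tr^*\bbx)$ for any such $\sigma$. Since $\tr^*\bbx\in H^{-1}_\Gamma\subset H^{-1}_{\rm comp}(\R^n)$, the mapping property \eqref{eq:NewtonMapping} gives $\cA\tr^*\bbx\in H^{1,{\rm loc}}(\R^n)$, hence $v:=\sigma\cA\tr^*\bbx\in H^1(\R^n)$. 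By definition of the orthogonal projection $P$ we have $v-Pv\in\tH^1(\Omega)=\tH^1(\Gamma^c)$, and because $n-2<d$ we have $s=1>(n-d)/2$, so by the inclusion $\tH^1(\Gamma^c)\subset\ker(\tr)$ recalled in \S\ref{sec:IEdSet} it follows that $\tr(v-Pv)=0$, i.e.\ $\tr(Pv)=\tr v$, which is precisely \eqref{eq:Arep}. Note that this step uses only $d>n-2$, not Assumption~\ref{ass:TildeCirc}.

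For (ii), the identity \eqref{eqn:SesquiA} follows by unwinding the pairings: with $\varphi=A\tr^*\bbx\in\tH^1(\Omega)^\perp\subset H^1(\R^n)$ and $f=\bby$, the adjoint relation \eqref{eq:tracedual} gives $\langle\IA\bbx,\bby\rangle_{\IH^{t_d}(\Gamma)\times\IH^{-t_d}(\Gamma)} = \langle\tr(A\tr^*\bbx),\bby\rangle = \langle A\tr^*\bbx,\tr^*\bby\rangle_{H^1(\R^n)\times H^{-1}(\R^n)} = a(\tr^*\bbx,\tr^*\bby)$ by \eqref{eqn:Sesqui}. Continuity of the form is then immediate from continuity of $a(\cdot,\cdot)$ (Lemma~\ref{lem:coer}) and boundedness of $\tr^*$. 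For compactly-perturbed coercivity I would transport the splitting $A=A_\ri+(A_k-A_\ri)$ from the proof of Lemma~\ref{lem:coer} through $\tr(\cdot)\tr^*$: the term $\tr\,A_\ri\,\tr^*$ is coercive on $\IH^{-t_d}(\Gamma)$ because, by the same pairing computation together with $\langle A_\ri\phi,\phi\rangle=\|\phi\|^2_{H^{-1}(\R^n)}$ and the fact that $\tr^*:\IH^{-t_d}(\Gamma)\to H^{-1}(\R^n)$ is an isometry, $\langle\tr A_\ri\tr^*\bbx,\bbx\rangle = \|\tr^*\bbx\|^2_{H^{-1}(\R^n)} = \|\bbx\|^2_{\IH^{-t_d}(\Gamma)}$; meanwhile $\tr(A_k-A_\ri)\tr^*$ is compact, being the composition of the compact operator $A_k-A_\ri$ with the bounded maps $\tr^*$ and $\tr$. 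Again only $d>n-2$ is used.

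For (iii), I would invoke Corollary~\ref{cor:td}: under Assumption~\ref{ass:TildeCirc} the maps $\tr^*:\IH^{-t_d}(\Gamma)\to H^{-1}_\Gamma$ and $\tr:\tH^1(\Omega)^\perp\to\IH^{t_d}(\Gamma)$ (recall $\Omega=\Gamma^c$) are unitary isomorphisms. Hence $\bbx\mapsto\phi:=\tr^*\bbx$ is a bijection between solution spaces; applying the injective map $\tr$ to $A\phi=g$ yields $\IA\bbx=\tr g$, and conversely, since both $A\phi$ and $g$ lie in $\tH^1(\Omega)^\perp$ on which $\tr$ is injective, $\IA\bbx=\tr g$ forces $A\phi=g$. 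The variational form \eqref{eqn:VariationalCts} transforms into \eqref{eq:varformnew} via \eqref{eqn:SesquiA} on the left and \eqref{eq:tracedual} on the right, after noting that $\psi=\tr^*\bby$ ranges over all of $H^{-1}_\Gamma$ as $\bby$ ranges over $\IH^{-t_d}(\Gamma)$; equivalence of \eqref{eq:IEnew} and \eqref{eq:varformnew} is the usual nondegeneracy argument. Finally, if Assumption~\ref{ass:Uniqueness} also holds then $A:H^{-1}_\Gamma\to\tH^1(\Omega)^\perp$ is invertible by Theorem~\ref{thm:equivalence}, and $\IA=\tr\,A\,\tr^*$ is then a composition of three invertible maps, hence invertible.

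The pairing manipulations are routine; the one point needing care — and the real crux — is keeping straight which claims need only $d>n-2$ (namely (i) and (ii), which rely on $\tH^1(\Gamma^c)\subset\ker(\tr)$ and the isometry property of $\tr^*$) versus those that genuinely require Assumption~\ref{ass:TildeCirc} to upgrade $\tr$ and $\tr^*$ to \emph{unitary isomorphisms} (namely the equivalence and invertibility in (iii)), since for $d=n$ the equality $\ker(\tr)=\tH^1(\Gamma^c)$ is not known in general.
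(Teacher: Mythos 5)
Your proposal is correct and follows essentially the same route as the paper's proof: part (i) via $\tH^1(\Omega)\subset\ker(\tr)$ so that $\tr P = \tr$ on $H^1(\R^n)$, part (ii) via the adjoint relation \eqref{eq:tracedual} together with Lemma~\ref{lem:coer} and the isometry property of $\tr^*$, and part (iii) via Corollary~\ref{cor:td} and Theorem~\ref{thm:equivalence}. You simply spell out in more detail the transport of the coercive-plus-compact splitting through $\tr(\cdot)\tr^*$, which the paper leaves implicit, and your closing remark about which parts require Assumption~\ref{ass:TildeCirc} correctly mirrors how the lemma's hypotheses are distributed.
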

\begin{proof}
(i) Equation \eqref{eq:Arep} follows from \eqref{eq:Porth}, \eqref{eq:ADef}, and the fact that $\tH^{1}(\Omega)\subset \ker(\tr)$,  %
which implies that $\tr P\phi=\tr \phi$ for $\phi\in H^{1}(\R^n)$. 

(ii) Equation \eqref{eqn:SesquiA} follows from \eqref{eqn:Sesqui} and \eqref{eq:tracedual}, and the rest of (ii) follows from \eqref{eqn:SesquiA}, Lemma \ref{lem:coer}, 
and that $\tr^*:\IH^{-t_d}(\Gamma)\to H^{-1}_\Gamma$ is an isometry.%

(iii) The first statement follows from Corollary \ref{cor:td}, and the second from Theorem \ref{thm:equivalence}.
\end{proof}
}

\rev{Crucial for the practical implementation of the Hausdorff IE method described in \S\ref{sec:HausdorffIEM} is the fact that both $\cA\tr^*$ and $\IA$ have integral representations with respect to Hausdorff measure.

\begin{thm}\label{lem:ISasHauss}
Let $\Gamma$ be a compact $d$-set with $n-2<d\leq n$. %
Then:
\begin{itemize}
\item[(i)]
For $\Psi\in\IL_2(\Gamma)$, 
\begin{align}
\label{eq:SLPrepHausdorff}
\cA\tr^*\Psi(\bx)=\int_{\Gamma}\Phi(\bx,\by)\Psi(\by)\,\rd \cH^d(\by), \qquad \bx\in \Omega;
\end{align}
\item[(ii)]
For $\Psi\in \IL_\infty(\Gamma)$, the right-hand side of \eqref{eq:SLPrepHausdorff} is well-defined (as a Lebesgue integral with respect to $\mathcal{H}^d$ measure) for all $x\in\R^n$, and is a continuous function on $\R^n$. Further, \eqref{eq:SLPrepHausdorff} holds for almost all $x\in \R^n$ with respect to $n$-dimensional Lebesgue measure, so that
$\cA\tr^*\Psi\in C(\R^n)$.
\item[(iii)]
For $\Psi\in \IL_\infty(\Gamma)$,
\begin{equation} \label{eq:ISasHauss}
\IA\bbx(x) = \int_{\Gamma} \Phi(x,y) \bbx(y) \rd \cH^d(y),
\qquad \text{ for }\cH^d\text{-a.e.\ }x\in\Gamma.
\end{equation}
\end{itemize}
\end{thm}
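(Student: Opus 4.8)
The plan is to prove the three parts in order, with (i) doing most of the work and (ii), (iii) following by density/continuity arguments. For part (i), the key idea is to start from the definition \eqref{eq:NPdef} of the Newton potential as a duality pairing and use the fact (Theorem \ref{thm:density}/Corollary \ref{cor:td}) that $\tr^*$ is the adjoint of the trace operator. Fix $x\in\Omega$ and choose $\sigma\in C^\infty_{0,\Gamma}$ with $x\notin\supp\sigma$; then $\sigma\Phi(x,\cdot)\in C_0^\infty(\R^n)\subset H^s(\R^n)$ for any $s$, and by \eqref{eq:NPdef}, \eqref{def:trstar}, \eqref{eq:tracedual} (used with $s$ close enough to $(n-d)/2$ from above that $t\in(0,1)$, or $s=t=0$ if $d=n$) and \eqref{eq:L2dualrep}, for $\Psi\in\IL_2(\Gamma)$ we get
\begin{align}
\notag
\cA\tr^*\Psi(x) &= \langle \sigma\Phi(x,\cdot),\overline{\tr^*\Psi}\rangle_{H^s(\R^n)\times H^{-s}(\R^n)} = \overline{\langle \tr(\sigma\Phi(x,\cdot)),\Psi\rangle_{\IH^t(\Gamma)\times\IH^{-t}(\Gamma)}}^{\,*}\\
\notag
&= (\tr(\sigma\Phi(x,\cdot)),\Psi)_{\IL_2(\Gamma)} = \int_\Gamma \sigma(y)\Phi(x,y)\Psi(y)\,\rd\cH^d(y),
\end{align}
and since $\sigma\equiv 1$ on a neighbourhood of $\Gamma$ this gives \eqref{eq:SLPrepHausdorff}. (One must be slightly careful tracking complex conjugates, since the dual pairings here are anti-linear; I would verify that the conjugations cancel consistently, using that $\Psi\in\IL_2(\Gamma)$ is arbitrary and can be replaced by $\overline{\Psi}$.)

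For part (ii), fix $\Psi\in\IL_\infty(\Gamma)\subset\IL_2(\Gamma)$ (finite since $\cH^d(\Gamma)<\infty$ by \eqref{eq:dset} and compactness). The integrand $\Phi(x,\cdot)\Psi(\cdot)$ is dominated by $\|\Psi\|_{\IL_\infty(\Gamma)}|\Phi(x,\cdot)|$, and the $d$-set lower bound $n-2<d$ together with the $\ord{|x-y|^{-(n-2)}}$ (or $\log$ in 2D) singularity of $\Phi$ gives, via a standard dyadic-annulus decomposition using \eqref{eq:dset}, that $\int_\Gamma |\Phi(x,y)|\,\rd\cH^d(y)$ is finite, uniformly for $x$ in compact sets, and in fact continuous in $x$ (dominated convergence away from $\Gamma$, and a uniform-in-$x$ tail estimate near $\Gamma$). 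Hence the right-hand side of \eqref{eq:SLPrepHausdorff} defines a function in $C(\R^n)$. That it agrees with $\cA\tr^*\Psi$ for Lebesgue-a.e.\ $x$: both sides are locally integrable functions on $\R^n$, they agree on $\Omega$ by part (i), and $\Gamma$ has Lebesgue measure zero when $d<n$; when $d=n$ one instead notes $\cA\tr^*\Psi\in H^{1,{\rm loc}}(\R^n)\subset C(\R^n)$ already has a continuous representative, and two continuous functions agreeing a.e.\ agree everywhere, so \eqref{eq:SLPrepHausdorff} extends by continuity.

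For part (iii), apply $\tr$ to the identity from (ii): by Lemma \ref{lem:IA}(i), $\IA\Psi = \tr(\sigma\cA\tr^*\Psi)$, and since $\sigma\equiv 1$ near $\Gamma$ and $\cA\tr^*\Psi\in C(\R^n)$, $\tr$ is just pointwise restriction to $\Gamma$, so $\IA\Psi(x) = \cA\tr^*\Psi(x) = \int_\Gamma\Phi(x,y)\Psi(y)\,\rd\cH^d(y)$ for every $x\in\Gamma$ (a fortiori $\cH^d$-a.e.), using that the right-hand side is continuous and hence its restriction to $\Gamma$ is the $\IL_2(\Gamma)$-function it represents. The main obstacle I anticipate is part (ii): carefully establishing the uniform local integrability and continuity of the Hausdorff-measure integral near $\Gamma$ using only the $d$-set condition and the exact singularity order of $\Phi$ — this is where the hypothesis $d>n-2$ is essential and where the annulus estimate must be done with care — together with the bookkeeping of anti-linear dual pairings and complex conjugates in part (i).
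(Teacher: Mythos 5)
Your overall strategy coincides with the paper's: part (i) via \eqref{eq:NPdef} and the adjoint-trace identity \eqref{eq:L2dualrep}; the well-definedness and continuity in (ii) via annulus estimates exploiting the upper bound in the $d$-set condition \eqref{eq:dset} together with $d>n-2$ (the paper outsources exactly this to the convolution estimates of \cite[Rem.~2.2]{HausdorffBEM} and the proof of \cite[Prop.~4.5]{HausdorffBEM}); and part (iii) by applying $\tr$, using Lemma \ref{lem:IA}(i) and the fact that the trace of an $H^1$ function possessing a continuous representative is the pointwise restriction of that representative. Parts (i) and (iii), and the $d<n$ half of (ii), are essentially the paper's argument.

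There is, however, a genuine gap in the case $d=n$ of the almost-everywhere identity in part (ii). When $d=n$ the set $\Gamma$ has positive Lebesgue measure (and nonempty interior in the examples the paper cares about, e.g.\ the solid Koch snowflake), so agreement of the two sides on $\Omega$ --- which is all that part (i) delivers, since \eqref{eq:NPdef} requires $x\notin\supp\sigma$ --- is \emph{not} agreement almost everywhere, and no continuity argument can propagate the identity from $\Omega$ into $\Gamma^\circ$, because $\Omega$ is not dense there. Your appeal to ``two continuous functions agreeing a.e.\ agree everywhere'' is circular at this point (a.e.\ agreement is precisely what is to be proved for $d=n$), and the inclusion $H^{1,{\rm loc}}(\R^n)\subset C(\R^n)$ you invoke is false for $n=2,3$ (Sobolev embedding into $C$ would require $1>n/2$). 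The paper closes this case differently and more directly: for $d=n$, $\cH^n$ is Lebesgue measure, $\tr^*\Psi$ is simply $\Psi$ extended by zero, a compactly supported element of $L_2(\R^n)$, and so \eqref{eq:SLPrepHausdorff} for a.e.\ $x\in\R^n$ is nothing but the defining formula \eqref{eq:NPPrep} for the Newton potential. Note that this gap propagates to your part (iii) when $d=n$: without the a.e.\ identity on all of $\R^n$ you cannot identify $\sigma\cA\tr^*\Psi$ with $\sigma$ times the continuous function given by the right-hand side of \eqref{eq:SLPrepHausdorff} as elements of $H^1(\R^n)$, which is exactly what the trace computation requires.
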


\begin{proof}
(i) This is an immediate consequence of \rf{eq:NPdef} and \rf{eq:L2dualrep}.

(ii) For $\Psi\in \IL_\infty(\Gamma)$, that the right-hand-side of \eqref{eq:SLPrepHausdorff} is well-defined for all $x\in \R^n$ and defines a continuous function on $\R^n$ follows as in the proof of \cite[Prop.~4.5]{HausdorffBEM}, using the estimates for convolution integrals with respect to $\mathcal{H}^d$ measure on $d$-sets in  \cite[Rem.~2.2]{HausdorffBEM} \rev{(cf.\ \cite[Lemma 2.18]{kral1980integral})}. For $d<n$, in which case $m(\partial \Gamma)=\mathcal{H}^n(\Gamma)=0$, we have %
from (i) 
that \eqref{eq:SLPrepHausdorff} holds for almost all $x\in \R^n$. For $d=n$, when $\Psi\in \IL_\infty(\Gamma)\subset L_{2}(\R^n)$, that \eqref{eq:SLPrepHausdorff} holds for \rev{almost} all $x\in \R^n$ is just a special case of \eqref{eq:NPPrep}.%

(iii) Suppose $\Psi\in \IL_\infty(\Gamma)$. For $x\in \R^n$ let $G(x)$ denote the right-hand side of \eqref{eq:SLPrepHausdorff}. By 
part (ii) 
and \eqref{eq:NewtonMapping},  $\sigma G\in H^1(\R^n)$ and is continuous, for $\sigma\in C^\infty_{0,\Gamma}$, so that $\tr(\sigma G)=(\sigma G)|_\Gamma=G|_\Gamma$. Since $\mathcal{A}\tr^*\Psi(x)=G(x)$, for a.e.\ $x\in \R^n$ with respect to $n$-dimensional Lebesgue measure by 
part (ii), 
it follows by \eqref{eq:Arep} that $\IA\Psi=G|_\Gamma$ in $\IL_2(\Gamma)$, giving the claimed result.
\end{proof}
}

Our next remark builds on the characterisations of the spaces $\IH^t(\Gamma)$ in Remark \ref{rem:conI}.
\begin{rem}[\bf Connection to known cases: II]  \label{rem:2nd}
\rev{The trace space formulation \eqref{eq:IEnew} of our IE} is familiar in a number of cases, in each of which $\Gamma$ is a $d$-set by Remark \ref{rem:conI} \rev{or Lemma \ref{ConnectionPlanarScreens}.}
\begin{enumerate}
\item[(a)] If $\Gamma$ is the boundary of a bounded Lipschitz open set (see, \rev{e.g.,} Figure \ref{fig:2Dexs}(b) \rev{and Remark \ref{rem:conI}(b)}), then $d=n-1$, \rev{$t_d=1/2$,} and $\cH^d$ coincides with surface measure on $\Gamma$ \cite[Theorem 3.8]{EvansGariepy}. Thus the expression \eqref{eq:SLPrepHausdorff} for $\cA\tr^*\Psi$ coincides with the definition (e.g., \cite[Eqn.~(2.19)]{ChGrLaSp:11}) of the standard single-layer potential with density $\Psi$. 
The representation \eqref{eq:ISasHauss} for $\IA$ %
coincides with the definition \cite[Eqn.~(2.32)]{ChGrLaSp:11} of the single-layer \rev{boundary integral} %
operator $S:H^{-1/2}(\Gamma)\to H^{1/2}(\Gamma)$\rev{, viz.\ $S\phi(x)=\int_\Gamma\Phi(x,y)\phi(x)\rd s(y)$}, 
\rev{and \eqref{eq:Arep} coincides with \cite[Eqn (3.6), Def.~3.15]{sauter-schwab11}.}
The IE \eqref{eq:IEnew} coincides with \cite[Eqn.~(2.63)]{ChGrLaSp:11}. 

\item[(b)] In the case where $\Gamma$ is a multi-screen \rev{(see, \rev{e.g.,} Figure \ref{fig:2Dexs}(d) and Remark \ref{rem:conI}(c)), $d=n-1$, $t_d = 1/2$,} the representation
\eqref{eq:SLPrepHausdorff} for $\mathcal{A}\tr^*$ coincides with
the definition of the single-layer potential given in \cite[Eqn.~(8.2)]{ClHi:13} and the representation \eqref{eq:ISasHauss} for $\IA$ coincides with the
first boundary integral operator from Proposition 8.8 in
\cite{ClHi:13} \rev{(with the same explicit surface integral form as in part (a) above)}. The mapping properties and coercivity up to a
compact perturbation derived in Lemma \ref{lem:IA}(ii) %
generalize the first inequality of \cite[Prop.~8.8]{ClHi:13}.

\item[(c)] If the $d$-set $\Gamma$ is a compact subset of $\Gamma_\infty:=\R^{n-1}\times {0}$  (\rev{see Lemma \ref{ConnectionPlanarScreens}}),
so $\Gamma$ is a planar screen (examples are Figure \ref{fig:2Dexs}(c) and (e)), the expression \eqref{eq:SLPrepHausdorff} for $\cA\tr^*\Psi$ coincides with  \cite[Eqn.~(43)]{HausdorffBEM} and the representation \eqref{eq:ISasHauss} for $\IA$ coincides with \cite[Eqn.~(49)]{HausdorffBEM}.
\end{enumerate}
\end{rem}

The definition and mapping properties of $\tr$ and $\tr^*$, noted in 
\eqref{def:tr} and \eqref{def:trstar},
combined with the representation \eqref{eq:Arep}, enable us to extend the domain of $\IA$ to $\IH^{-t}(\Gamma)$ for $t_d<t<2t_d$ or restrict it to $\IH^{-t}(\Gamma)$ for $0<t<t_d$ as stated in the following result (cf.~\cite[Prop.~4.7]{HausdorffBEM}).

\begin{prop}
\label{lem:cont}
Let $\Gamma$ be a compact $d$-set with $n-2<d \leq n$, and let $|t|<t_d$. Then $\IA: \IH^{t-t_d}(\Gamma)\to \IH^{t+t_d}(\Gamma)$ 
and is continuous. When $d=n$ this holds also for $t=\pm t_d = \pm 1$. %
\end{prop}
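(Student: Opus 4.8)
The plan is to realise $\IA$ as a composition of four maps whose mapping properties are already recorded in \S\ref{sec:IEdSet}, and to bookkeep Sobolev exponents. Recall from Lemma \ref{lem:IA}(i) that, for any fixed $\sigma\in C^\infty_{0,\Gamma}$, one has $\IA\Psi=\tr(\sigma\,\cA\,\tr^*\Psi)$ on the core domain $\IH^{-t_d}(\Gamma)$. The idea is to show that, for $|t|<t_d$, the same formula defines a bounded operator $\IH^{t-t_d}(\Gamma)\to\IH^{t+t_d}(\Gamma)$, via the chain: $\tr^*$ from $\IH^{t-t_d}(\Gamma)$ into $H^{t-1}_\Gamma$, then $\cA$ from $H^{t-1}_{\rm comp}(\R^n)$ into $H^{t+1,{\rm loc}}(\R^n)$, then multiplication by $\sigma$ into $H^{t+1}(\R^n)$, then $\tr$ from $H^{t+1}(\R^n)$ into $\IH^{t+t_d}(\Gamma)$; and afterwards to identify this composition with $\IA$.

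For the exponent bookkeeping, fix $t$ with $|t|<t_d$. Since $t-t_d<0$, solving $-(t-t_d)=s'-\frac{n-d}{2}$ gives $s'=1-t$, and $1-t>1-t_d=\frac{n-d}{2}$, so the cotrace mapping property \eqref{def:trstar} applies with this $s'$ and yields a continuous isometry $\tr^*:\IH^{t-t_d}(\Gamma)\to H^{t-1}_\Gamma$; since $\Gamma$ is compact, $H^{t-1}_\Gamma\subset H^{t-1}_{\rm comp}(\R^n)$. Next, \eqref{eq:NewtonMapping} (valid for every real value of its parameter) gives that $\cA:H^{t-1}_{\rm comp}(\R^n)\to H^{t+1,{\rm loc}}(\R^n)$ is continuous. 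Multiplication by $\sigma\in C^\infty_{0,\Gamma}$ maps $H^{t+1,{\rm loc}}(\R^n)$ continuously into $H^{t+1}(\R^n)$. Finally, $1+t>1-t_d=\frac{n-d}{2}$, so the trace mapping property \eqref{def:tr} applies with $s=1+t$, for which $s-\frac{n-d}{2}=t+t_d$, giving a continuous map $\tr:H^{t+1}(\R^n)\to\IH^{t+t_d}(\Gamma)$. Composing, one obtains a bounded operator $\IH^{t-t_d}(\Gamma)\to\IH^{t+t_d}(\Gamma)$.

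To conclude, I would observe, exactly as in the proof of Lemma \ref{lem:IA}(i), that the composition is independent of $\sigma$ (two admissible choices coincide on a neighbourhood $G$ of $\Gamma$, so their difference applied to any $v\in H^{t+1,{\rm loc}}(\R^n)$ lies in $H^{t+1}_{\R^n\setminus G}$, which $\tr$ annihilates since $1+t>\frac{n-d}{2}$), hence it genuinely restricts or extends $\IA$: for $t\geq 0$ it is the restriction of $\IA$ to $\IH^{t-t_d}(\Gamma)\hookrightarrow\IH^{-t_d}(\Gamma)$, while for $t\leq 0$ it is the unique continuous extension of $\IA$ to $\IH^{t-t_d}(\Gamma)\supset\IH^{-t_d}(\Gamma)$, the latter being dense. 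For the endpoint cases $d=n$, $t=\pm1$ (so $t_d=1$), exactly one of the exponents $s'=1-t$, $s=1+t$ equals $0$, and for $d=n$ both \eqref{def:tr} and \eqref{def:trstar} persist at the limiting value $s=0$ by Theorem \ref{thm:density} (with $\tr$ the restriction $\varphi\mapsto\varphi|_\Gamma$ and $\IL_2(\Gamma)=L_2(\Gamma)$); the same four-fold composition then delivers $\IA:\IH^{t-t_d}(\Gamma)\to\IH^{t+t_d}(\Gamma)$. The only step needing genuine care is verifying that all intermediate Sobolev exponents lie strictly above $\frac{n-d}{2}$ — equivalently $|t|<t_d$ — with the sole relaxation to $|t|\leq t_d$ permitted precisely when $d=n$, where the critical exponent $\frac{n-d}{2}=0$ is itself admissible; everything else is routine continuity of compositions.
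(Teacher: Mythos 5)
Your proposal is correct and follows essentially the same route as the paper's own proof: factor $\IA$ through $\tr^*$, $\cA$, multiplication by $\sigma$, and $\tr$ via \eqref{eq:Arep}, and track the Sobolev exponents using \eqref{def:tr}, \eqref{def:trstar} and \eqref{eq:NewtonMapping}, with the $d=n$ endpoints handled by the limiting case $s=0$ of the trace maps. Your additional checks (independence of $\sigma$ and identification of the composition as the restriction/extension of $\IA$) are careful touches the paper leaves implicit, but the argument is the same.
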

\begin{proof}
This follows from \eqref{eq:Arep} and the mapping properties \eqref{eq:NewtonMapping} of $\mathcal{A}$, recalling from \eqref{def:tr} and \eqref{def:trstar}
that, for $s>(n-d)/2$,  the mappings $\tr:H^s(\R^n)\to \IH^t(\Gamma)$ and $\tr^*:\IH^{-t}(\Gamma)\to H_\Gamma^{-s}$ are continuous, where $t=s-(n-d)/2>0$, and that, when $d=n$ and $t=s$, these mappings are continuous also for $s=0$ \rev{(see the proof of Theorem \ref{thm:density})}. 
\end{proof}

\rev{Under certain assumptions, 
$\IA: \IH^{t-t_d}(\Gamma)\to \IH^{t+t_d}(\Gamma)$ is invertible for a range of $t$ around $0$.} 
\begin{prop}
\label{prop:epsilon}
\rev{Let $\Gamma$ be an \rev{OSC-}IFS attractor with  dimension 
$d:=\dimH(\Gamma)$  
such that either \mbox{(a) $\Gamma$ is disjoint} with $n-2<d<n$, (b) $d=n$, or (c) $d=n-1$ and $\Gamma\subset\Gamma_\infty:=\R^{n-1}\times\{0\}$.
If Assumption \ref{ass:Uniqueness} holds,} there exists $0<\epsilon\leq t_d$ such that $\IA: \IH^{t-t_d}(\Gamma)\to \IH^{t+t_d}(\Gamma)$ is invertible for $|t|<\epsilon$.
\end{prop}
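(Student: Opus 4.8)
The plan is to bootstrap from the invertibility of $\IA$ at $t=0$, which is already available, to a whole neighbourhood of indices by a stability-under-interpolation argument. First I would check Assumption \ref{ass:TildeCirc}: in cases (a) and (c) we have $n-2<d<n$, so part (i) of that assumption holds automatically, while in case (b) we have $d=n$ and $\Gamma$ an $n$-set OSC-IFS attractor, so it holds by Remark \ref{rem:TildeCirc}(iii) (drawing on \cite{CaChGiHe23}). Since Assumption \ref{ass:Uniqueness} is also assumed, Lemma \ref{lem:IA}(iii) then gives that $\IA:\IH^{-t_d}(\Gamma)\to\IH^{t_d}(\Gamma)$ is invertible; this is the $t=0$ member of the family in the statement.

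Next I would use that $\{\IH^\sigma(\Gamma)\}_{\sigma\in\R}$ is a complex interpolation scale when $\Gamma$ is an OSC-IFS attractor of one of the three admissible types (this, together with the associated trace and density results, is one place the IFS hypothesis enters; see \cite{caetano2019density,HausdorffBEM}). Consequently the domains $\{\IH^{t-t_d}(\Gamma)\}_{|t|<t_d}$ and codomains $\{\IH^{t+t_d}(\Gamma)\}_{|t|<t_d}$ are interpolation scales indexed by $t$, and by Proposition \ref{lem:cont} the operator $\IA:\IH^{t-t_d}(\Gamma)\to\IH^{t+t_d}(\Gamma)$ is bounded for every $|t|<t_d$, with operator norm locally bounded in $t$. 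By the Sneiberg stability lemma (the set of interpolation parameters at which a bounded operator between two interpolation scales is invertible is open, with locally bounded inverse norms), invertibility at $t=0$ propagates to $|t|<\epsilon$ for some $0<\epsilon\le t_d$, which is exactly the assertion.

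A more hands-on alternative, avoiding an appeal to the Sneiberg lemma for $\IA$ itself, is to split $\IA=\IA_\ri+(\IA-\IA_\ri)$, where $\IA_\ri$ is the integral operator with wavenumber $\kappa=\ri$. The kernel of $\IA-\IA_\ri$ is smooth on $\R^n\times\R^n$ (the diagonal singularities of $\Phi$ and of the $\kappa=\ri$ fundamental solution cancel), so $\IA-\IA_\ri$ is compact from $\IH^{t-t_d}(\Gamma)$ to $\IH^{t+t_d}(\Gamma)$; and $\IA_\ri$ is self-adjoint and sign-definite on $\IH^{-t_d}(\Gamma)$ (indeed $\langle\IA_\ri\Psi,\Psi\rangle=\|\Psi\|_{\IH^{-t_d}(\Gamma)}^2$, as in the proof of Lemma \ref{lem:coer}), so the interpolation argument applied to $\IA_\ri$ shows $\IA_\ri:\IH^{t-t_d}(\Gamma)\to\IH^{t+t_d}(\Gamma)$ is invertible for $|t|<\epsilon_1$. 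Hence $\IA$ is Fredholm of index zero on these spaces for $|t|<\min(\epsilon_1,t_d)$, and it remains to check injectivity: for $0\le t<\epsilon_1$ the domain is contained in $\IH^{-t_d}(\Gamma)$, so injectivity is inherited from the $t=0$ case; for $-\epsilon_1<t<0$ one bootstraps, since $\IA\Psi=0$ gives $\IA_\ri\Psi=-(\IA-\IA_\ri)\Psi\in C^\infty(\Gamma)\subset\IH^{t_d}(\Gamma)$, and injectivity of $\IA_\ri$ on $\IH^{t-t_d}(\Gamma)$, together with the fact that $\IA_\ri$ acts consistently across the scale, forces $\Psi\in\IH^{-t_d}(\Gamma)$ and hence $\Psi=0$. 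Thus $\IA$ is invertible on $\IH^{t-t_d}(\Gamma)\to\IH^{t+t_d}(\Gamma)$ for $|t|<\min(\epsilon_1,t_d)$.

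The main obstacle is not the $t=0$ invertibility (which is already in hand) but the passage to negative $t$: the trace isomorphisms of Corollary \ref{cor:td} are established only for $t\in(0,1)$ (and $t=0$ when $d=n$), so one cannot simply transport an invertibility statement on $\R^n$ through $\tr$ and $\tr^*$ at index $t-t_d<0$, and one is forced to exploit the interpolation-scale structure of the spaces $\IH^\sigma(\Gamma)$. Establishing that structure, and the required density results, for OSC-IFS attractors of the three admissible types is the essential input; once it is granted, verifying the remaining hypotheses (local boundedness of operator norms along the scale, compactness of $\IA-\IA_\ri$) is routine.
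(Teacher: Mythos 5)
Your main argument is essentially the paper's own proof: invertibility at $t=0$ from Lemma \ref{lem:IA}(iii) (after checking Assumption \ref{ass:TildeCirc} via Remark \ref{rem:TildeCirc}(iii) in case (b)), boundedness for $|t|<t_d$ from Proposition \ref{lem:cont}, the interpolation-scale structure of $\{\IH^\sigma(\Gamma)\}$ in the three admissible cases (citing \cite{HausdorffBEM} and \cite{CaChGiHe23}, plus Lemma \ref{ConnectionPlanarScreens} for case (c)), and the \v{S}ne\u{\i}berg stability result \cite[Prop.~4.7]{mitrea1999boundary}. Your ``hands-on alternative'' still invokes the same interpolation machinery for $\IA_\ri$, so it is a variant rather than a genuinely different route; the primary argument is correct and matches the paper.
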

\begin{proof}
The claimed invertibility of $\IA: \IH^{t-t_d}(\Gamma)\to \IH^{t+t_d}(\Gamma)$ for a range of $t$ in a neighbourhood of $t=0$ follows by applying a result on interpolation of invertibility of operators (\!{\cite[Prop.~4.7]{mitrea1999boundary}, which quotes \cite{vsneiberg1974spectral}}),
recalling that (i) $\IA: \IH^{t-t_d}(\Gamma)\to \IH^{t+t_d}(\Gamma)$ is bounded for $|t|< t_d$ (Proposition \ref{lem:cont}); (ii) $\IA: \IH^{-t_d}(\Gamma)\to \IH^{t_d}(\Gamma)$ is invertible, as noted below \eqref{eq:varformnew}; and (iii) in the case that $\Gamma$ is disjoint and $d<n$, $\{\IH^t(\Gamma)\}_{|t|<1}$ is an interpolation scale \cite[Cor.~3.3]{HausdorffBEM}; (iv) in the case that $d=n$, 
$\{\IH^t(\Gamma)\}_{t\geq 0}$ and $\{\IH^t(\Gamma)\}_{t\leq 0}$ are interpolation scales \cite{CaChGiHe23}; \rev{(v) in the case $d=n-1$ and $\Gamma\subset\Gamma_\infty:=\R^{n-1}\times\{0\}$ the statement of (iv) is also true, by (iv) applied in $\R^{n-1}$ and Lemma \ref{ConnectionPlanarScreens}.}
\end{proof}

\rev{Invertibility of $\IA: \IH^{t-t_d}(\Gamma)\to \IH^{t+t_d}(\Gamma)$ for some $t>0$ implies a regularity result about the solution 
$\phi$ of the IE \eqref{eq:IE},
provided that the datum $g$ is sufficiently smooth.
}

\begin{rem}[\bf Solution regularity in the $H_\Gamma^s$ scale] \label{rem:regularity}
If there exists $0<t<t_d$ such that $\IA: \IH^{t-t_d}(\Gamma)\to \IH^{t+t_d}(\Gamma)$ is invertible and $\tr g\in \IH^{t+t_d}(\Gamma)$, and if
\rev{Assumption \ref{ass:TildeCirc} holds,}   
then, by \eqref{eqn:IADef} and the mapping properties of $\tr^*$ recalled in \rev{Theorem} \ref{thm:density}, 
the solution $\phi=\tr^*\Psi$ of the IE \rf{eq:IE} satisfies
\begin{align}
\label{eq:phiReg}
\phi\in H^{-1+t}_\Gamma, \quad \mbox{with} \quad \|\phi\|_{H^{-1+t}_\Gamma} \leq C\|\tr g\|_{\IH^{t+t_d}(\Gamma)},
\end{align}
for some constant $C>0$ independent of $\phi$ and $g$.
In the case of scattering of an incident wave $u^i$, in which $g$ is given by \eqref{eqn:gDefScatteringProblem}, we have that $\tr g=-u^i|_\Gamma \in \IH^{t+t_d}(\Gamma)$ for all $0<t<t_d$, since $u^i$ is $C^\infty$ in a neighbourhood of $\Gamma$.
Hence, in this case, 
if the conditions of Proposition \ref{prop:epsilon} hold, then 
\eqref{eq:phiReg} holds for $0< t<\epsilon$, where $\epsilon$ is as in Proposition \ref{prop:epsilon}. 
\end{rem}

\rev{
Given $\Gamma$, determining the largest value of $t$ for which $\IA: \IH^{t-t_d}(\Gamma)\to \IH^{t+t_d}(\Gamma)$ is invertible is an open problem. However, so that we have a theoretical prediction against which to compare our numerical results in \S\ref{sec:Numerics}, we consider the following hypothesis.

\rev{
\begin{hyp}\label{conj}
$\IA: \IH^{t-t_d}(\Gamma)\to \IH^{t+t_d}(\Gamma)$ is invertible for $0\leq t<t_{d'}$, where $t_{d'}:=1-(n-d')/2$, with 
$d':=\dimH(\partial\Omega_+)$ and $\Omega_+$ the unbounded component of $\Omega=\Gamma^c$.
\end{hyp}
}

To give some context for Hypothesis \ref{conj}, we note that, to match Proposition \ref{lem:cont}, a naive hypothesis might be that 
$\IA: \IH^{t-t_d}(\Gamma)\to \IH^{t+t_d}(\Gamma)$ is invertible for all $0\leq t<t_{d}$ (cf.~\cite[Conj.~4.8]{HausdorffBEM} in the planar screen case). 
However, such a hypothesis fails in cases where $d'<d$, such as Figures \ref{fig:2Dexs}(a) (where $d'=1<2=d$) and \ref{fig:2Dexs}(g) (where $d'=\log(4)/\log(3)<2=d$).} 
\rev{Indeed, if this naive hypothesis were to hold,  then, for a scattering problem with $u^i|_\Gamma\neq 0$, arguing as in Remark \ref{rem:regularity} it would follow that $0\neq\phi\in H^s_\Gamma$ for every $s<-(n-d)/2$. 
Furthermore, by Remark \ref{rem:support} we would have that $\supp\phi\subset\partial\Omega_+$, from which it would follow that $0\neq\phi\in H^s_{\partial\Omega_+}$ for every $s<-(n-d)/2$. 
But if $d'<d$ this is impossible because 
$H^s_{\partial\Omega_+}=\{0\}$ for $-(n-d')/2<s<-(n-d)/2$, in fact for $-(n-d')/2\leq s<-(n-d)/2$ if $\partial\Omega_+$ is a $d'$-set \cite[Thms~2.12 \& 2.17]{HewMoi:15}. 
Therefore, Hypothesis \ref{conj} is the strongest hypothesis that is consistent with Remark \ref{rem:support}.}  
\rev{In \S\ref{sec:Numerics} we report numerical results which suggest that Hypothesis \ref{conj} may hold in certain cases, but not in general.}

\section{The Hausdorff-measure IEM}
\label{sec:HausdorffIEM}
We now define and analyse our Hausdorff-measure Galerkin IEM. %
To begin with, let us assume that $\Gamma$ is a compact $d$-set for some $n-2<d\leq n$.
Given $N\in \N$ let $\{T_j\}_{j=1}^N$ be a \emph{mesh} of $\Gamma$, a collection of $\cH^d$-measurable subsets of $\Gamma$ (the \emph{elements}) such that
\[\Gamma = \bigcup_{j=1}^N T_j, \quad \cH^d(T_j)>0 \text{ for }j=1,\ldots,N, \quad\text{ and }\cH^d(T_j\cap T_{j'})=0 \text{ for }j\neq j',\]
and set $h:=\max_{j=1,\ldots,N}\diam(T_j)$. Define  the $N$-dimensional space of piecewise constants
\begin{equation}\label{eq:IVnDef}
\IV_N:=\{f\in\IL_2(\Gamma):f|_{T_j}=c_j \text{ for some }c_j\in \C, \, j=1,\ldots,N\}\subset\IL_2(\Gamma)\end{equation}
and set
\begin{align}
\label{eq:VnDef}
V_N := \tr^*(\IV_N)\subset H^{-1}_\Gamma.
\end{align}
\rev{
Under appropriate assumptions, the spaces $V_N$ are dense in $H^{-1}_\Gamma$ as $N\to\infty$. 
\begin{lem}
\label{lem:density}
Suppose that Assumption \ref{ass:TildeCirc} holds, and that $h\to 0$ as $N\to\infty$. Then 
\begin{align}
\label{eq:VNConv}
\inf_{\psi_N\in V_N}\|\psi-\psi_N\|_{H^{-1}(\R^n)}\to 0 \quad \mbox{as} \quad N\to \infty, \quad \mbox{for all} \quad \psi\in H_\Gamma^{-1}. 
\end{align}
\end{lem}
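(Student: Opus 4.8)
The plan is to prove the density statement \eqref{eq:VNConv} by reducing it, via the unitary isomorphism $\tr^*:\IL_2(\Gamma)\to\tr^*(\IL_2(\Gamma))\subset H^{-1}_\Gamma$, to a density statement for the piecewise-constant spaces $\IV_N$ inside $\IL_2(\Gamma)$, and then to handle the passage from $\IL_2(\Gamma)$ to all of $H^{-1}_\Gamma$ by an approximation/density argument. First I would recall from Corollary~\ref{cor:td} (which applies because Assumption~\ref{ass:TildeCirc} holds) that $\tr^*:\IH^{-t_d}(\Gamma)\to H^{-1}_\Gamma$ is a unitary isomorphism and that $\tr^*(\IL_2(\Gamma))$ is dense in $H^{-1}_\Gamma$. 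Since $\tr^*$ is an isometry from $\IL_2(\Gamma)$ (with its $\IH^{-t_d}(\Gamma)$-induced structure, noting $\IL_2(\Gamma)=\IH^0(\Gamma)\hookrightarrow\IH^{-t_d}(\Gamma)$), it suffices to show that for every $f\in\IL_2(\Gamma)$ we have $\inf_{f_N\in\IV_N}\|\tr^*f-\tr^*f_N\|_{H^{-1}(\R^n)}\to 0$, and then combine this with the density of $\tr^*(\IL_2(\Gamma))$ in $H^{-1}_\Gamma$ via a standard $\varepsilon/2$ argument: given $\psi\in H^{-1}_\Gamma$ and $\varepsilon>0$, pick $f\in\IL_2(\Gamma)$ with $\|\psi-\tr^*f\|_{H^{-1}(\R^n)}<\varepsilon/2$, then pick $N_0$ and $f_N\in\IV_N$ with $\|\tr^*f-\tr^*f_N\|_{H^{-1}(\R^n)}<\varepsilon/2$ for $N\ge N_0$.

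The core of the argument is therefore: \emph{piecewise constants on a mesh of vanishing width are dense in $\IL_2(\Gamma)$, and $\tr^*$ carries this $\IL_2(\Gamma)$-approximation into an $H^{-1}(\R^n)$-approximation.} For the first part, I would use the fact that $\tr(C_0^\infty(\R^n))$ (equivalently, restrictions of continuous functions, or indeed $\IL_\infty(\Gamma)\cap C(\Gamma)$) is dense in $\IL_2(\Gamma)$; this is part of the trace-space setup recalled before \eqref{def:tr}. Given a continuous $g$ on $\Gamma$, its piecewise-constant interpolant $g_N:=\sum_{j=1}^N g(x_j)\Ione_{T_j}$ (choosing any $x_j\in T_j$) satisfies $\|g-g_N\|_{\IL_\infty(\Gamma)}\le\omega_g(h)\to0$ as $h\to0$ by uniform continuity of $g$ on the compact set $\Gamma$, where $\omega_g$ is the modulus of continuity; since $\cH^d(\Gamma)<\infty$ this gives $\|g-g_N\|_{\IL_2(\Gamma)}\to0$. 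Then for general $f\in\IL_2(\Gamma)$ one approximates $f$ by a continuous $g$ in $\IL_2(\Gamma)$-norm and $g$ by $g_N\in\IV_N$, again by $\varepsilon/2$. To pass to $H^{-1}(\R^n)$, I would simply invoke that $\tr^*:\IL_2(\Gamma)\to H^{-1}(\R^n)$ is bounded (indeed an isometry onto its image with respect to the $\IH^{-t_d}$-norm, and in particular $\|\tr^*f\|_{H^{-1}(\R^n)}=\|f\|_{\IH^{-t_d}(\Gamma)}\le C\|f\|_{\IL_2(\Gamma)}$ using the continuous embedding $\IL_2(\Gamma)\hookrightarrow\IH^{-t_d}(\Gamma)$), so that $\IL_2(\Gamma)$-convergence of $f_N\to f$ implies $H^{-1}(\R^n)$-convergence of $\tr^*f_N\to\tr^*f$, and $\tr^*f_N\in\tr^*(\IV_N)=V_N$.

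The main subtlety — what I would be most careful about — is not any single hard estimate but making sure the logical chain through the three density facts is assembled correctly, and in particular that the continuity/boundedness of $\tr^*$ on $\IL_2(\Gamma)$ (needed to transfer the $\IL_2$-approximation) is legitimately available. This is exactly where Assumption~\ref{ass:TildeCirc} is used: without it we only know $\tr^*:\IH^{-t_d}(\Gamma)\to H^{-s}_\Gamma$ for $s$ in a range not including $s=1$ when $d=n$, whereas here $s=1$, $t=t_d$ is precisely the case we need. Under Assumption~\ref{ass:TildeCirc}, Corollary~\ref{cor:td} delivers exactly the unitary isomorphism $\tr^*:\IH^{-t_d}(\Gamma)\to H^{-1}_\Gamma$ and the density of $\tr^*(\IL_2(\Gamma))$ in $H^{-1}_\Gamma$, which are the two ingredients the argument cannot do without; everything else is routine. (One could alternatively phrase the whole proof as: $V_N=\tr^*(\IV_N)$, $\bigcup_N\IV_N$ is dense in $\IL_2(\Gamma)$ as $h\to0$, $\IL_2(\Gamma)$ is dense in $\IH^{-t_d}(\Gamma)$, $\tr^*$ is a unitary isomorphism onto $H^{-1}_\Gamma$, hence $\bigcup_N V_N$ is dense in $H^{-1}_\Gamma$; but spelling out the quantitative $h\to0$ dependence as above is cleaner for later use in the convergence theorems.)
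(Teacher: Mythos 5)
Your proposal is correct and follows essentially the same route as the paper: establish that the piecewise-constant spaces $\IV_N$ are dense in $\IL_2(\Gamma)$ as $h\to 0$, then transfer this to $H^{-1}_\Gamma$ using the boundedness of $\tr^*$ on $\IL_2(\Gamma)$ together with the density of $\tr^*(\IL_2(\Gamma))$ in $H^{-1}_\Gamma$, which Corollary~\ref{cor:td} supplies under Assumption~\ref{ass:TildeCirc}. The only difference is that you spell out the $\IL_2(\Gamma)$-density of piecewise constants via continuous approximants and uniform continuity, whereas the paper simply cites the proof of \cite[Thm 5.1]{HausdorffBEM} for this step.
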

\begin{proof}
Suppose that $h\to 0$ as $N\to\infty$. It is easy to see (see the proof of \cite[Thm 5.1]{HausdorffBEM}) that $\inf_{f_N\in \IV_N}\|f-f_N\|_{\IL_2(\Gamma)}\to 0$ as $N\to \infty$ for every $f\in \IL_2(\Gamma)$,
and then \eqref{eq:VNConv} follows by the density of $\tr^*(\IL_2(\Gamma))$ in $H_\Gamma^{-1}$, which holds under Assumption \ref{ass:TildeCirc} by Corollary \ref{cor:td}. 
\end{proof}
}

Our method for solving the IE \rf{eq:IE}
uses $V_N$ as the approximation space in a Galerkin method, based on %
\eqref{eqn:VariationalCts}, with $a$ defined by \rf{eqn:Sesqui}.
Given $g\in (\tH^{1}(\Gamma^c))^\perp$ we seek $\phi_N\in V_N$ such that
\begin{align}
\label{eqn:Variational}
a(\phi_N,\psi_N)
=\langle g,\psi_N\rangle_{H^{1}(\R^n)\times H^{-1}(\R^n)}, \qquad \forall \psi_N\in V_N.
\end{align}
Let $\{f^i\}_{i=1}^N$ be a basis for $\IV_N$, and let $\{e^i=\tr^*f^i\}_{i=1}^N$ be the corresponding basis for $V_N$. Then, writing $\phi_N=\sum_{j=1}^N c_j e^j$, \eqref{eqn:Variational} implies that
$\vec{c}=(c_1,\ldots,c_N)^T\in\C^N$ satisfies the system
\begin{equation} \label{eq:cvec}
\underline{\underline{A}} \vec{c} = \vec{b},
\end{equation}
where, by \eqref{eqn:SesquiA}, \eqref{eq:L2dualequiv}, and \eqref{eq:ISasHauss}, the matrix $\underline{\underline{A}} \in \C^{N\times N}$ has $(i,j)$-entry given by %
\begin{align}
A_{ij}&=a(e^j,e^i)
 = \langle \IA f^j,f^i \rangle_{\IH^{t_d}(\Gamma)\times \IH^{-t_d}(\Gamma)}
\nonumber\\
&=\int_\Gamma \int_\Gamma \Phi(x,y) f^j(y) \overline{f^i(x)}\, \rd\cH^d(y)\rd\cH^d(x),\label{eq:Galerkin}
\end{align}
and, by \eqref{eq:L2dualrep}, the vector $\vec{b}\in\C^N$ has $i$th entry given by
\begin{align}
b_{i}=\langle g,e^i \rangle_{H^{1}(\R^n)\times H^{-1}(\R^n)}
= \int_\Gamma \tr g(x) \overline{f^i(x)}\, \rd\cH^d(x),
\label{eq:GalerkinRHS}
\end{align}
with $\tr g(x)=-u^i(x)$, $x\in \Gamma$, for the scattering problem with $g$ given by \eqref{eqn:gDefScatteringProblem}. 

\rev{
\begin{rem}[Connection to known cases: III]
Building on Remark~\ref{rem:2nd}, 
if $\Gamma$ is the boundary of a bounded Lipschitz open set, or a multi-screen, then the above Galerkin method is simply a classical piecewise-constant boundary element method for the single-layer equation $S\phi=g$. 
If $\Gamma$ is a planar screen in the sense of Lemma \ref{ConnectionPlanarScreens} then the method is identical to that proposed in \cite[\S5]{HausdorffBEM} and the linear system \eqref{eq:cvec} is identical to \cite[Eqn.~(55)]{HausdorffBEM}.
\end{rem}
}
Once we have computed $\phi_N$ by solving \eqref{eq:cvec} we will compute approximations to 
$u(x)$ and $u^\infty(x)$, given by \eqref{eq:Rep}/\eqref{eq:NPdef} and \eqref{eq:ffpattern}, respectively.  
Each expression takes 
the form
$J(\phi)$, where
\begin{equation} \label{eq:Jdef}
J(\psi) := \langle \varphi, \overline{\psi}\rangle_{H^{1}(\R^n)\times H^{-1}(\R^n)}, \quad \psi\in H^{-1}_\Gamma,
\end{equation}
for %
some $\varphi\in (\tH^{1}(\Omega))^\perp$. Explicitly, 
\begin{equation} \label{eq:varphi}
\varphi = P\left(\sigma v\right),
\end{equation}
where $\sigma$ is any element of $C^\infty_{0,\Gamma}$ (with $x$ not in the support of $\sigma$ in the case that $J(\phi)=u(x)$) and $v = \Phi(x,\cdot)$ in the case that $J(\phi)=u(x)$, $v=\Phi^\infty(\hat x,\cdot)$ in the case that $J(\phi)=u^\infty(\hat x)$; note that each $v$ is $C^\infty$ in a neighbourhood of $\Gamma$. In each case we approximate $J(\phi)$ by $J(\phi_N)$ which, recalling \eqref{eq:L2dualrep}, is given explicitly by (cf.~\cite[Eqn.~(62)]{HausdorffBEM})
\begin{align} %
J(\phi_N) = \langle \varphi, \overline{\phi_N}\rangle_{H^{1}(\R^n)\times H^{-1}(\R^n)} = \vec{c}^T\vec{\varphi},
\label{eq:JphiN}
\end{align}
where $\vec{\varphi}$ has $j$th entry given by
\begin{align}
\label{eq:varphivecdefn}
\vec{\varphi}_j = \int_\Gamma \tr \varphi(x) \,f^j(x)\, \rd\cH^d(x),
\end{align}
and $\tr \varphi(x) = v(x)$, $x\in \Gamma$, for $\varphi$ given by \eqref{eq:varphi}.
The following is a basic convergence result.

\begin{thm}
\label{thm:Convergence}
Let $\Gamma$ be a compact $d$-set for some $n-2<d\leq n$,  
\rev{and suppose that Assumptions \ref{ass:Uniqueness} and \ref{ass:TildeCirc} hold.}  
Suppose also that $h\to 0$ as $N\to \infty$.
Then for sufficiently large $N\in\N$ the variational problem \rf{eqn:Variational}
has a unique solution $\phi_N\in V_N$ that is quasi-optimal in the sense that, for some constant $C>0$ independent of $\phi$ and $N$,
\begin{align}
\label{eq:Quasiopt}
\|\phi - \phi_N\|_{H^{-1}(\R^n)} \leq C
\inf_{\psi_N\in V_N}\|\phi - \psi_N\|_{H^{-1}(\R^n)},
\end{align}
where $\phi\in H^{-1}_\Gamma$ denotes the solution of \rf{eq:IE}. Furthermore, $\|\phi - \phi_N\|_{H^{-1}(\R^n)} \to 0$ as $N\to \infty$, and, where $J(\cdot)$ is given by \eqref{eq:Jdef} for some $\varphi \in  (\tH^{1}(\Omega))^\perp$, $J(\phi_N)\to J(\phi)$ as $N\to\infty$.
\end{thm}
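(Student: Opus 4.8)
The plan is to apply the standard abstract theory for Galerkin methods applied to variational problems whose sesquilinear form is continuous and compactly perturbed coercive (sometimes called the Céa–type theory under a Gårding inequality, or the discrete inf–sup argument of e.g.\ \cite[\S4.2]{BEMfract} or Hildebrandt–Wienholtz / Schatz). First I would record the ingredients already in hand: by Lemma \ref{lem:coer} the form $a(\cdot,\cdot)$ on $H^{-1}_\Gamma\times H^{-1}_\Gamma$ is continuous and compactly perturbed coercive; by Theorem \ref{thm:equivalence}, under Assumption \ref{ass:Uniqueness} the operator $A$ is invertible, so \eqref{eqn:VariationalCts} has a unique solution $\phi\in H^{-1}_\Gamma$; and by Lemma \ref{lem:density}, under Assumption \ref{ass:TildeCirc} and the hypothesis $h\to0$, the subspaces $V_N=\tr^*(\IV_N)$ satisfy the approximation property \eqref{eq:VNConv}, i.e.\ $\bigcup_N V_N$ is dense in $H^{-1}_\Gamma$.

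\emph{Unique solvability and quasi-optimality.} Write $A = A_\ri + K$ where $A_\ri$ is coercive (coercivity constant $1$) and $K:=A-A_\ri$ is compact, as in the proof of Lemma \ref{lem:coer}. The abstract argument runs as follows: since $A$ is invertible and $K$ is compact, the sequence of Galerkin projections converges to the identity on the range in the appropriate sense precisely because (i) $A_\ri$ is coercive, so the Galerkin method for $A_\ri$ alone is unconditionally stable and quasi-optimal, and (ii) the density property \eqref{eq:VNConv} forces the Galerkin approximation of the compact operator $K$ to converge in operator norm on $V_N$. Concretely, one shows that the discrete operators $A_N:=Q_N A|_{V_N}$ (where $Q_N$ is the relevant projection dual to the inclusion $V_N\hookrightarrow H^{-1}_\Gamma$) are invertible for $N$ large with $\|A_N^{-1}\|$ bounded uniformly in $N$; this is the classical consequence of $A$ being invertible and a compact perturbation of a coercive operator, combined with \eqref{eq:VNConv} — see e.g.\ \cite[Thm 4.2.9]{sauter-schwab11} or the discussion in \cite[\S2.2, \S4]{BEMfract}. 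Uniform boundedness of $\|A_N^{-1}\|$ immediately gives existence and uniqueness of $\phi_N$ for large $N$ and the quasi-optimality bound \eqref{eq:Quasiopt} via the standard Céa-type estimate $\|\phi-\phi_N\|_{H^{-1}(\R^n)}\le (1+\|A_N^{-1}\|\,\|A\|)\inf_{\psi_N\in V_N}\|\phi-\psi_N\|_{H^{-1}(\R^n)}$, with $C$ independent of $\phi$ and $N$ because both $\|A\|$ and $\sup_N\|A_N^{-1}\|$ are.

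\emph{Convergence of $\phi_N$ and of the functionals.} Convergence $\|\phi-\phi_N\|_{H^{-1}(\R^n)}\to0$ then follows by combining \eqref{eq:Quasiopt} with the density statement \eqref{eq:VNConv} applied to $\psi=\phi$. Finally, for the functional convergence: $J$ defined by \eqref{eq:Jdef} is a bounded antilinear functional on $H^{-1}_\Gamma$ (boundedness because $\varphi\in(\tH^1(\Omega))^\perp\subset H^1(\R^n)$ and the duality pairing is continuous), so $|J(\phi)-J(\phi_N)|\le \|\varphi\|_{H^1(\R^n)}\,\|\phi-\phi_N\|_{H^{-1}(\R^n)}\to0$; one notes that both the near-field evaluation ($v=\Phi(x,\cdot)$, $x\in\Omega$) and the far-field evaluation ($v=\Phi^\infty(\hat x,\cdot)$) are exactly of this form, via \eqref{eq:varphi}, with $v$ smooth near $\Gamma$, so $\varphi$ is a well-defined element of $(\tH^1(\Omega))^\perp$ in each case.

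\emph{Main obstacle.} The only genuinely delicate point is establishing the uniform bound $\sup_N\|A_N^{-1}\|<\infty$, i.e.\ that the Galerkin method inherits stability from the compactly-perturbed-coercive structure of $a(\cdot,\cdot)$ together with the density \eqref{eq:VNConv}; everything else is bookkeeping. This is a standard result but its invocation must be done carefully in the present Hilbert-space setting with antilinear duals, and it is worth remarking that it is exactly here that both Assumption \ref{ass:Uniqueness} (for invertibility of $A$) and Assumption \ref{ass:TildeCirc} (for the density \eqref{eq:VNConv} via Corollary \ref{cor:td} and Lemma \ref{lem:density}) are used; without injectivity of $A$, or without density of $\bigcup_N V_N$, the argument fails.
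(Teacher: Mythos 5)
Your proposal is correct and follows essentially the same route as the paper's proof: Lemma \ref{lem:coer} (compactly perturbed coercivity), Theorem \ref{thm:equivalence} (invertibility under Assumption \ref{ass:Uniqueness}), the density property \eqref{eq:VNConv} from Lemma \ref{lem:density}, and the standard Galerkin stability/quasi-optimality theory of \cite[\S4.2]{sauter-schwab11}, followed by continuity of $J$. The paper simply states this in three lines, whereas you unpack the uniform bound on the discrete inverses; the substance is identical.
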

\begin{proof}
The sesquilinear form $a(\cdot,\cdot)$ is compactly perturbed coercive (Lemma \ref{lem:coer}), and invertible if Assumption \ref{ass:Uniqueness} holds (Theorem \ref{thm:equivalence}), so the quasi-optimality \eqref{eq:Quasiopt} holds for all sufficiently large $N$ by \eqref{eq:VNConv} and standard Galerkin method theory \cite[\S4.2.3]{sauter-schwab11}. The remaining results follow by %
\rev{Lemma \ref{lem:density}}  
and the continuity of the linear functional $J(\cdot)$.
\end{proof}

\subsection{Galerkin error estimates}
\label{sec:GalerkinBounds}

\rev{If the exact solution $\phi$ possesses sufficient regularity and the spaces $V_N$ have appropriate approximability properties, then Theorem \ref{thm:Convergence} can be used to derive Galerkin error estimates, and superconvergence estimates for functionals. We record this fact in the following theorem.} 

\rev{
\begin{thm}
\label{thm:IEMConvergence}
Let the assumptions of Theorem \ref{thm:Convergence} hold. Suppose additionally that $\phi\in H^s_\Gamma$ for some $-1<s<-(n-d)/2$, and that 
\begin{align}
\label{eq:Hsboundh}
\inf_{\psi_h\in V_N}\|\psi-\psi_h\|_{H^{-1}_\Gamma}\leq c  h^{1+s}\|\psi\|_{H^{s}_\Gamma}, \qquad 0<h\leq \diam(\Gamma), \quad \psi\in H^{s}_\Gamma.
\end{align}
Then, for some constant $c>0$ independent of $h$ and $\phi$,
\begin{align}
\label{eq:GalerkinBound}
\|\phi-\phi_N\|_{H^{-1}_\Gamma}\leq c  h^{1+s}\|\phi\|_{H^{s}_\Gamma},
\end{align}
for all sufficiently large $N$. 
\rev{Furthermore, let $J(\cdot)$ be given by  \eqref{eq:Jdef} for some $\varphi\in (\tH^{1}(\Omega))^\perp$, and denote by $\zeta\in H^{-1}_\Gamma$ the solution $\phi$ of \rf{eqn:VariationalCts}, in the case that $g$ is replaced by $\varphi$. Suppose that $\zeta \in H^s_\Gamma$. 
Then }
\begin{align}
\label{eq:GalerkinBoundJ}
|J(\phi)-J(\phi_N)|\leq c h^{2(1+s)}\|\phi\|_{H^{s}_\Gamma}\|\zeta\|_{H^{s}_\Gamma},%
\end{align}
for some constant $c>0$ independent of $h$, $\phi$, and $\zeta$, for all sufficiently large $N$.
\end{thm}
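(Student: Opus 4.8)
The plan is to prove the two estimates in turn. For \eqref{eq:GalerkinBound} I would combine the quasi-optimality established in Theorem~\ref{thm:Convergence} with the approximability hypothesis \eqref{eq:Hsboundh}. Precisely, since all the hypotheses of Theorem~\ref{thm:Convergence} are in force, for $N$ large enough the Galerkin solution $\phi_N$ exists, is unique, and satisfies \eqref{eq:Quasiopt}; inserting $\psi=\phi\in H^s_\Gamma$ into \eqref{eq:Hsboundh}, and using that $\|\cdot\|_{H^{-1}_\Gamma}$ coincides with $\|\cdot\|_{H^{-1}(\R^n)}$ on the closed subspace $H^{-1}_\Gamma$, gives $\|\phi-\phi_N\|_{H^{-1}_\Gamma}\le C\inf_{\psi_N\in V_N}\|\phi-\psi_N\|_{H^{-1}_\Gamma}\le c\,h^{1+s}\|\phi\|_{H^s_\Gamma}$, which is \eqref{eq:GalerkinBound} up to relabelling of constants.

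For the functional estimate \eqref{eq:GalerkinBoundJ} the plan is a standard Aubin--Nitsche duality argument. First note that $\zeta$ is well defined: the hypotheses of Theorem~\ref{thm:Convergence} include Assumption~\ref{ass:Uniqueness}, so $A$ is invertible by Theorem~\ref{thm:equivalence}, and hence \eqref{eqn:VariationalCts} with datum $\varphi\in(\tH^1(\Omega))^\perp$ has a unique solution $\zeta\in H^{-1}_\Gamma$. The key ingredient is a reciprocity identity for the sesquilinear form, namely $a(\chi,\overline\psi)=a(\psi,\overline\chi)$ for all $\chi,\psi\in H^{-1}_\Gamma$, where $\overline{(\cdot)}$ denotes complex conjugation; this reflects the symmetry $\Phi(x,y)=\Phi(y,x)$ of the Helmholtz fundamental solution (equivalently, $\langle A\chi,\overline\psi\rangle_{H^1(\R^n)\times H^{-1}(\R^n)}=\langle A\psi,\overline\chi\rangle_{H^1(\R^n)\times H^{-1}(\R^n)}$), which I would verify first for $\chi,\psi$ with $\IL_\infty$ densities using the integral representation \eqref{eq:Galerkin}/\eqref{eq:ISasHauss} and then extend to all of $H^{-1}_\Gamma$ by density and the continuity of $a$ from \eqref{eq:ContCoer}. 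Noting that $H^{-1}_\Gamma$ and $V_N=\tr^*(\IV_N)$ are both invariant under conjugation, I would set $\chi:=\phi-\phi_N$ and use $\overline\chi\in H^{-1}_\Gamma$ as a test function in \eqref{eqn:VariationalCts} for $\zeta$ to obtain $J(\phi)-J(\phi_N)=J(\chi)=\langle\varphi,\overline\chi\rangle_{H^1(\R^n)\times H^{-1}(\R^n)}=a(\zeta,\overline\chi)=a(\chi,\overline\zeta)$.

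Subtracting \eqref{eqn:Variational} from \eqref{eqn:VariationalCts} gives Galerkin orthogonality $a(\chi,\psi_N)=0$ for all $\psi_N\in V_N$; in particular, for any $\zeta_N\in V_N$, $a(\chi,\overline\zeta)=a(\chi,\overline{\zeta-\zeta_N})$. Applying the continuity bound in \eqref{eq:ContCoer} and then optimising over $\zeta_N$ yields $|J(\phi)-J(\phi_N)|\le C_a\,\|\chi\|_{H^{-1}_\Gamma}\,\inf_{\zeta_N\in V_N}\|\zeta-\zeta_N\|_{H^{-1}_\Gamma}$. Under the additional hypothesis $\zeta\in H^s_\Gamma$, hypothesis \eqref{eq:Hsboundh} (now with $\psi=\zeta$) bounds the infimum by $c\,h^{1+s}\|\zeta\|_{H^s_\Gamma}$, while \eqref{eq:GalerkinBound} bounds $\|\chi\|_{H^{-1}_\Gamma}$ by $c\,h^{1+s}\|\phi\|_{H^s_\Gamma}$; multiplying these gives \eqref{eq:GalerkinBoundJ}, valid for all sufficiently large $N$.

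The routine parts here are the assembly of quasi-optimality, Galerkin orthogonality, continuity of $a$, and the two approximation estimates. The main obstacle, and the only place requiring care, is the reciprocity identity together with the bookkeeping of complex conjugates: because $a(\cdot,\cdot)$ is genuinely sesquilinear (not symmetric), the conjugated test functions $\overline\chi,\overline\zeta$ must be placed in exactly the right slots, and one must check both that conjugation leaves $H^{-1}_\Gamma$ and $V_N$ invariant and that the symmetry of the kernel survives the passage from the explicit integral representation to general densities in $\IH^{-t_d}(\Gamma)$.
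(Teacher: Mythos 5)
Your proposal is correct and follows essentially the same route as the paper: \eqref{eq:GalerkinBound} from quasi-optimality \eqref{eq:Quasiopt} plus the approximability hypothesis \eqref{eq:Hsboundh}, and \eqref{eq:GalerkinBoundJ} by the standard Aubin--Nitsche duality argument (which the paper delegates to the proof of \cite[Thm 5.6]{HausdorffBEM}). Your explicit treatment of the reciprocity identity $a(\chi,\overline\psi)=a(\psi,\overline\chi)$, the conjugation-invariance of $H^{-1}_\Gamma$ and $V_N$, and Galerkin orthogonality simply fills in the details the paper leaves to that reference, and all of the ingredients you invoke (density of $\tr^*(\IL_2(\Gamma))$ in $H^{-1}_\Gamma$ via Assumption~\ref{ass:TildeCirc}, continuity of $a$ from \eqref{eq:ContCoer}) are indeed available under the stated hypotheses.
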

\begin{proof}
\rev{The bound \eqref{eq:GalerkinBound} follows from \eqref{eq:Hsboundh} and 
\eqref{eq:Quasiopt}.  
The bound \eqref{eq:GalerkinBoundJ} follows from  \eqref{eq:GalerkinBound} by a standard Aubin-Nitsche trick argument, 
as in the proof of 
\cite[Thm 5.6]{HausdorffBEM}.}
\end{proof}
}

\rev{In the case where $\Gamma$ is the attractor of an OSC-IFS there is a natural way to build quasi-uniform meshes on $\Gamma$. Furthermore, under certain assumptions, we prove in Theorem \ref{thm:ApproxDisjoint} that the conditions of Theorem \ref{thm:IEMConvergence} are satisfied, so that the error bounds \eqref{eq:GalerkinBound} and \eqref{eq:GalerkinBoundJ} hold. 
}

\begin{figure}
\centering
\subfl{$\ell=1$ decomposition}{\includegraphics[width=.48\linewidth]{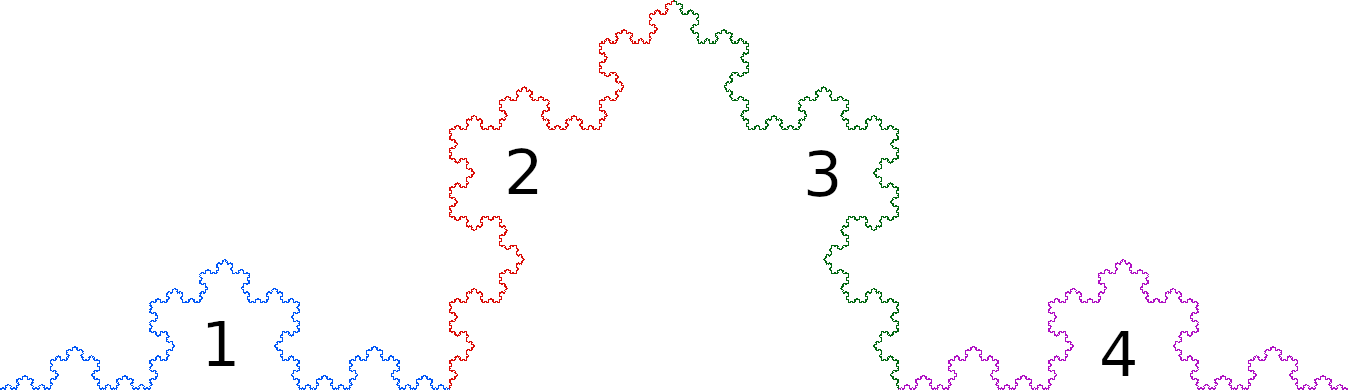}
}
\hfill%
\subfl{$\ell=2$ decomposition}{\includegraphics[width=.51\linewidth]{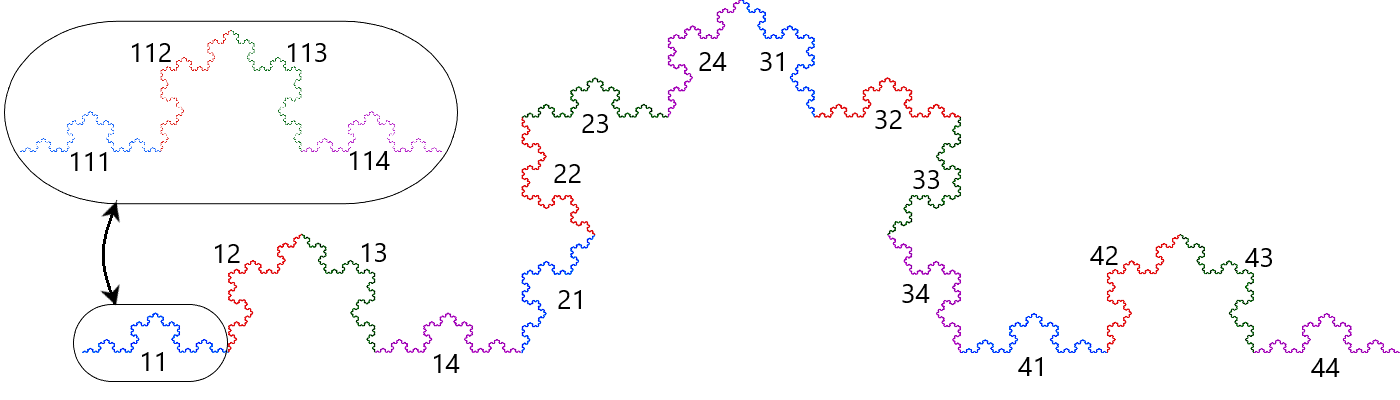}%
}
\caption{Level 1 (a) and level 2 (b) decompositions of the Koch curve. To make the labelling more compact, in (a) the labels ``$1$'',\ldots,``$4$'' indicate 
the subsets 
$\Gamma_1,\ldots,\Gamma_4$, and in (b) 
the labels 
``$ij$'' and ``$ijk$'' indicate 
$\Gamma_{(i,j)}$
and
$\Gamma_{(i,j,k)}$.
In (b) the insert shows the level 3 decomposition of
$\Gamma_{(1,1)}$.
}
\label{fig:KochCurveDecomposition}
\end{figure}

Let $\Gamma \subset \R^n$ be 
the attractor of an \rev{OSC-}IFS $\{s_1,\ldots,s_M\}$. %
Following \cite{Jonsson98},
for $\ell\in \N$ we define the set of multi-indices $I_\ell:=\{1,\ldots,M\}^\ell\!=\{\bm = (m_1,m_2,\ldots,m_\ell)$: $\,1\leq m_j\leq M , \, j=1,2,\ldots,\ell\}$, and for $E\subset\R^n$ and $\bm\in I_\ell$ we define
$E_{\bm}=s_{m_1}\circ s_{m_2}\circ \ldots \circ s_{m_\ell}(E)$.
We also set $I_0:=\{0\}$ and adopt the convention that $E_0:=E$.
This notation extends that of \rf{eq:GammamDef} where the sets $\Gamma_1,\ldots,\Gamma_M$ were introduced, corresponding to $E=\Gamma$ and $\ell=1$ here.
We illustrate this 
for the Koch curve (Example \ref{ex:Koch}, Figure \ref{fig:2Dexs}(e)) in Figure \ref{fig:KochCurveDecomposition}. Illustrations for other examples are given in \cite[Figs~1-6]{NonDisjointQuad}.

Let $0<h\leq \diam(\Gamma)$. Define the %
index set $L_h$ by $L_h:=\{0\}$ for $h=\diam(\Gamma)$, and by
\begin{align}
\label{eq:LhDef}
L_{h} := \bigg\{\bm \in \bigcup_{\ell=1}^\infty I_\ell : \diam(\Gamma_{\bm})\leq h \text{ and } \diam(\Gamma_{\bm_-})>h\bigg\},
\end{align}
for $h<\diam(\Gamma)$, where, for $\bm = (m_1,\ldots,m_\ell)$, $\bm_-:= (m_1,...,m_{\ell-1})$ if $\ell\in \N$ with $\ell\geq 2$, and $\bm_-:= 0$ if $\ell=1$. 
\rev{Then $\{T_j\}_{j=1}^N:=\{\Gamma_{\bm}\}_{\bm\in L_h}$ defines a quasi-uniform mesh of $\Gamma$. }
We define 
\rev{the spaces of piecewise-constant functions
\begin{align} %
\label{eq:YHDef}
\IY_h := {\rm span}\big(\{\chi_{\bm}\}_{\bm\in L_{h}}\big) 
\qquad \text{and} \qquad 
Y_h := \tr^*(\IY_h) \subset H^{-1}_\Gamma,
\end{align}
}
where $\{\chi_{\bm}\}_{\bm\in L_{h}}$ is the canonical $\IL_2(\Gamma)$-orthonormal basis for $\IY_h$ given by
\begin{align}
\label{eq:VkBasisDefn}
\chi_{\bm}(x):=
\begin{cases}
\frac{1}{\cH^d(\Gamma_{\bm})^{1/2}}, & x\in \Gamma_{\bm},\\
0,& \text{otherwise},
\end{cases}
\end{align}
The following theorem is then a consequence of results in \cite{HausdorffBEM} and \cite{CaChGiHe23}. 

\rev{
\begin{thm}\label{thm:ApproxDisjoint}
Let $\Gamma$ be an OSC-IFS attractor with dimension $d:=\dimH(\Gamma)$, such that either (a) $\Gamma$ is disjoint, (b) $d=n$, or (c) $d=n-1$ and $\Gamma\subset \Gamma_\infty:=\R^{n-1}\times \{0\}$. Let Assumption \ref{ass:Uniqueness} hold, and set $V_N=Y_h$. %
Then \eqref{eq:Hsboundh} holds for all $-1<s<-(n-d)/2$. 

Furthermore, for the scattering problem with $g$ defined by \eqref{eqn:gDefScatteringProblem} there exists $-1<s<-(n-d)/2$ 
\rev{such that 
\eqref{eq:GalerkinBound} holds and \eqref{eq:GalerkinBoundJ} 
holds 
}
for both $J(\phi)=u(x)$ and $J(\phi)=u^\infty(\hat{x})$. 
\end{thm}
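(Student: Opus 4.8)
The plan is to obtain the statement by combining the piecewise-constant approximation estimates of \cite{HausdorffBEM} and \cite{CaChGiHe23} with the invertibility result of Proposition \ref{prop:epsilon} and the abstract Galerkin theorem \ref{thm:IEMConvergence}. First I would dispose of the trivial case $d\leq n-2$: then $\Gamma$ being a $d$-set forces $H^{-1}_\Gamma=\{0\}$ (Remark \ref{rem:cap}), so $V_N=Y_h=\{0\}$, $\phi=\phi_N=0$, and every asserted bound holds with both sides zero; in the remaining cases $n-2<d\leq n$, with $d<n$ in case (a) (since $\Gamma$ is disjoint), $d=n$ in case (b), $d=n-1$ in case (c). I would also record that Assumption \ref{ass:TildeCirc} holds throughout — automatically when $d<n$, and by Remark \ref{rem:TildeCirc}(iii) when $d=n$ (an $n$-set OSC-IFS attractor) — so that Theorem \ref{thm:Convergence}, and hence Theorem \ref{thm:IEMConvergence}, applies, and that the mesh $\{\Gamma_{\bm}\}_{\bm\in L_h}$ is quasi-uniform: $\rho_{\min}h<\diam(\Gamma_{\bm})\leq h$ by the definition of $L_h$ (with $\rho_{\min}:=\min_m\rho_m$), whence $c_1h^d\leq\cH^d(\Gamma_{\bm})\leq c_2h^d$ by the $d$-set property.

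For the first claim I would fix $s\in(-1,-(n-d)/2)$, put $t':=-s-(n-d)/2\in(0,t_d)$, and use Theorem \ref{thm:density}/Corollary \ref{cor:td}: since $\tr^*:\IH^{-t'}(\Gamma)\to H^{s}_\Gamma$ and $\tr^*:\IH^{-t_d}(\Gamma)\to H^{-1}_\Gamma$ are unitary isomorphisms and $Y_h=\tr^*(\IY_h)$ with $\IY_h\subset\IH^{-t'}(\Gamma)$, the left-hand side of \eqref{eq:Hsboundh} equals $\inf_{\Psi_h\in\IY_h}\|\Psi-\Psi_h\|_{\IH^{-t_d}(\Gamma)}$ for $\psi=\tr^*\Psi$, $\|\Psi\|_{\IH^{-t'}(\Gamma)}=\|\psi\|_{H^s_\Gamma}$. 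So it suffices to prove
\[ \inf_{\Psi_h\in\IY_h}\|\Psi-\Psi_h\|_{\IH^{-t_d}(\Gamma)}\leq c\,h^{t_d-t'}\|\Psi\|_{\IH^{-t'}(\Gamma)},\qquad \Psi\in\IH^{-t'}(\Gamma), \]
with $t_d-t'=1+s$. In the disjoint case $d<n$ this is exactly the piecewise-constant approximation estimate of \cite[\S5]{HausdorffBEM}, and in the case $d=n$ that of \cite{CaChGiHe23}; both follow from the direct bound $\inf_{\Psi_h}\|\Psi-\Psi_h\|_{\IL_2(\Gamma)}\leq ch^r\|\Psi\|_{\IH^r(\Gamma)}$ (self-similar scaling, using the $d$-set bound and quasi-uniformity), an $\IL_2$-duality argument with the orthonormal basis $\{\chi_{\bm}\}$ for the negative-norm estimates, and the interpolation-scale property of $\{\IH^t(\Gamma)\}$ recalled in Proposition \ref{prop:epsilon}(iii)--(v). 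Case (c) reduces to the $d=n-1$ instance in $\R^{n-1}$ via the identification in Lemma \ref{ConnectionPlanarScreens}. This yields \eqref{eq:Hsboundh} for every admissible $s$.

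For the second claim I would invoke Proposition \ref{prop:epsilon}, whose hypotheses coincide with those here, to get $0<\epsilon\leq t_d$ with $\IA:\IH^{t-t_d}(\Gamma)\to\IH^{t+t_d}(\Gamma)$ invertible for $|t|<\epsilon$, and then fix once and for all some $t\in(0,\epsilon)$ and set $s:=-1+t$, so that $-1<s<-1+t_d=-(n-d)/2$. For the scattering problem, $u^i$ is $C^\infty$ near $\Gamma$, so choosing $\sigma\in C^\infty_{0,\Gamma}$ supported there gives $\tr g=-u^i|_\Gamma=-\tr(\sigma u^i)\in\IH^{t+t_d}(\Gamma)$; hence $\Psi:=\IA^{-1}\tr g\in\IH^{t-t_d}(\Gamma)$ and, by Lemma \ref{lem:IA}(iii) and Theorem \ref{thm:density}, $\phi=\tr^*\Psi\in H^{-1+t}_\Gamma=H^s_\Gamma$. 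The same argument, applied to the $\varphi$ of \eqref{eq:varphi} for $J(\phi)=u(x)$ (with $x\in\Omega$) and for $J(\phi)=u^\infty(\hat x)$ — where $\tr\varphi=v|_\Gamma$ with $v=\Phi(x,\cdot)$, respectively $v=\Phi^\infty(\hat x,\cdot)$, each $C^\infty$ near $\Gamma$ — gives $\zeta=\tr^*(\IA^{-1}\tr\varphi)\in H^s_\Gamma$ for this \emph{same} $s$. With this $s$, \eqref{eq:Hsboundh} holds by the first part, so all hypotheses of Theorem \ref{thm:IEMConvergence} are met and \eqref{eq:GalerkinBound}, \eqref{eq:GalerkinBoundJ} follow for both functionals.

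The genuinely substantive content — the approximation estimates together with the underlying interpolation-scale properties of $\{\IH^t(\Gamma)\}$ near the endpoint exponents — is carried by \cite{HausdorffBEM} and \cite{CaChGiHe23}, so I expect the only real difficulties to be bookkeeping: verifying that the regularity exponent $s$ supplied by Proposition \ref{prop:epsilon} lies strictly inside $(-1,-(n-d)/2)$ and can be used simultaneously for $\phi$ and for both functionals $\zeta$, and checking that the hypotheses of cases (a)--(c) line up across Proposition \ref{prop:epsilon}, Theorem \ref{thm:Convergence}, and Theorem \ref{thm:IEMConvergence} (above all that Assumption \ref{ass:TildeCirc} holds). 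If one wanted a self-contained argument, the hardest step would be reproving the interpolation-scale property / negative-norm duality estimate in the non-planar disjoint case.
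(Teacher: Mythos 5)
Your proposal is correct and follows essentially the same route as the paper: transfer \eqref{eq:Hsboundh} to the trace-space estimate \eqref{eq:Htbound} via the unitary isomorphism $\tr^*$ of Corollary \ref{cor:td}, cite \cite{HausdorffBEM} for the disjoint case and \cite{CaChGiHe23} for $d=n$ (with case (c) reduced to $\R^{n-1}$ by Lemma \ref{ConnectionPlanarScreens}), and obtain the regularity of $\phi$ and $\zeta$ needed for Theorem \ref{thm:IEMConvergence} from Proposition \ref{prop:epsilon} as in Remark \ref{rem:regularity}. The one point you under-justify is the claim that the disjoint case is ``exactly'' \cite[Prop.~5.2]{HausdorffBEM}: that result is stated only for planar screens with $n-1<d<n$, whereas here $\Gamma$ is a general (possibly non-planar) disjoint attractor with possibly $d\leq n-1$, and the paper spends a paragraph explaining why the argument nevertheless extends — the wavelet-norm characterisation from \cite{Jonsson98} on which it rests is stated there under the assumption that $\Gamma$ is not contained in a hyperplane, an assumption needed only for Markov's inequality, which is trivial for the piecewise constants used here. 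You flag this as the ``hardest step'' of a self-contained argument, which is the right instinct, but the citation as written does not cover it without that extension.
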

}

\begin{proof}
For case (a), where $\Gamma$ is a disjoint IFS attractor, it was proved in \cite[Prop.~5.2]{HausdorffBEM} that, for every $0<t<1$ and every $0<t'<t$, there exists a constant $c>0$ such that 
\begin{align}
\label{eq:Htbound}
\inf_{\Psi_h\in \IY_h}\| f-\Psi_h\|_{\IH^{-t}(\Gamma)}\leq c\,h^{t-t'}\| f\|_{\IH^{-t'}(\Gamma)}, \quad 0<h\leq \diam(\Gamma), \quad f\in \IH^{-t'}(\Gamma).
\end{align}
In \cite[Prop.~5.2]{HausdorffBEM} this result was actually only stated for $n-1<d<n$, but the argument of \cite[Prop.~5.2]{HausdorffBEM} holds in fact for all $0<d<n$ (so, in particular for $n-2<d<n$), because the results from \cite{Jonsson98} on which it is based hold for all $0<d<n$. 
This latter statement requires some explanation. A key step in the argument of \cite[Prop.~5.2]{HausdorffBEM} was the use of results from \cite{Jonsson98} to prove that $\|\cdot\|_{\IH^t(\Gamma)}$ is equivalent to a norm defined in terms of coefficient decay in a wavelet expansion; see \cite[Thm.~3.1 and Cor.~3.3(iii)]{HausdorffBEM}. The relevant results in \cite{Jonsson98} (Theorems 1 and 2) are stated under the additional assumption that $\Gamma$ is not contained in an $(n-1)$-dimensional hyperplane.
However, this additional assumption is made in \cite{Jonsson98} solely to ensure that Markov's inequality \cite[Eqn (4.1)]{Jonsson98} is satisfied for whatever class of polynomials is being used in the wavelet expansion. As a result, this additional assumption is superfluous for us because we consider only piecewise-constant approximations and $0<t<1$, while \cite{Jonsson98} considers also higher order polynomials and larger $t$, and for constant functions Markov's inequality \cite[Eqn (4.1)]{Jonsson98} is trivially satisfied.

For case (b), the result \eqref{eq:Htbound} was proved for $t=1$ in \cite{CaChGiHe23}, using a quite different argument based on Poincar\'e inequalities. The fact that it also holds in case (c), again for $t=1$, follows from the result for case (b), applied in the setting of $\R^{n-1}$, and 
Lemma \ref{ConnectionPlanarScreens}. 

The above 
establishes \eqref{eq:Htbound} for the particular case $t=t_d$. The bound \eqref{eq:Hsboundh} then follows by \rev{Theorem \ref{thm:density} and} Corollary \ref{cor:td}, noting that in case (b) Assumption \ref{ass:TildeCirc} holds by Remark \ref{rem:TildeCirc}(iii). 

The final statement then follows from Theorem \ref{thm:IEMConvergence}, since, for the scattering problem, and the choices of $J$ under consideration, the solutions $\phi$ and $\zeta$ possess some extra regularity by Proposition \ref{prop:epsilon} (see the argument in Remark \ref{rem:regularity} for $\phi$, and argue similarly for $\zeta$).
\end{proof}

\begin{rem}[\bf Convergence rates]
\label{rem:ConvRates}
Suppose that, in addition to the assumptions of Theorem \ref{thm:ApproxDisjoint}, 
\rev{Hypothesis \ref{conj} holds}. \rev{Then,}   arguing as in Remark \ref{rem:regularity}, assuming the datum $g$ is sufficiently smooth, we will have the maximum possible regularity for $\phi$, i.e., 
$\phi\in H^s_\Gamma$ for every $-1<s<-(n-d')/2$. %
Then, assuming that the bounds in Theorem \ref{thm:IEMConvergence} are sharp, in numerical experiments we should expect to see errors in the computation of $\phi$ and of linear functionals of $\phi$ roughly proportional to $h^{1+(d'-n)/2}$ and $h^{2+d'-n}$, respectively. 
\rev{(For the latter, assume also that $\varphi$ in Theorem \ref{thm:Convergence} is sufficiently smooth so that $\zeta \in H^s_\Gamma$ for every $-1<s<-(n-d')/2$.)} 

\rev{If, additionally,} $\Gamma$ is homogeneous, with $\rho_m=\rho $ for $m=1,\ldots,M$, for some $0<\rho<1$, in which case $d=\dimH(\Gamma)=\log(M)/\log(1/\rho)$, 
\rev{then the meshes defined by \eqref{eq:LhDef} are uniform, and taking \mbox{$h=\rho ^\ell\diam{\Gamma}$}, for some $\ell\in \N$, gives $L_h = I_\ell=\{1,\ldots,M\}^\ell$ and $V_N=Y_h=\tr^*({\rm span}\left(\{\chi_{\bm}\}_{\bm\in I_\ell}\right))$.}    
In this case, since $h$ is proportional to $\rho^\ell$ and $\rho=M^{-1/d}$, we should see errors \rev{in $\phi$ and in linear functionals of $\phi$}   proportional to $(M^{d'/d})^{-\ell/2}$ and $(M^{d'/d})^{-\ell}$, respectively, in the case $n=2$, and proportional to $(M^{d'/d}\rho )^{-\ell/2}$ and $(M^{d'/d}\rho )^{-\ell}$, respectively, in the case $n=3$. 
\end{rem}

\subsection{Numerical quadrature}
\label{sec:Quadrature}
To implement our method we need suitable numerical quadrature rules to evaluate the integrals \eqref{eq:Galerkin}, \eqref{eq:GalerkinRHS} and \eqref{eq:varphivecdefn}. For this we generalise the approach taken for the screen case in \cite{HausdorffBEM}. Here we give only the main ideas, and refer the reader to Appendix \ref{sec:QuadratureAppendix}, \cite[\S5.4]{HausdorffBEM}, and \cite{HausdorffQuadrature,NonDisjointQuad} for details.

Suppose that $\Gamma$ is \rev{an OSC-IFS} attractor, %
and that, as in \S\ref{sec:GalerkinBounds}, we are using the approximation space $V_N=Y_h$ given by \eqref{eq:YHDef}.
Suppose that $g$ is given by \eqref{eqn:gDefScatteringProblem} and $\varphi$ by \eqref{eq:varphi}, with $u^i$ and $v$ both $C^\infty$ in a neighbourhood of $\Hull(\Gamma)$, the convex hull of $\Gamma$.
Suppose that we adopt the canonical $\IL_2(\Gamma)$-normalised basis \eqref{eq:VkBasisDefn}, %
so that $f^{j}= %
\chi_{\bm(j)}$, $j=1,\ldots,N$, 
where $N:= |L_h|$, with $L_h$ given by \eqref{eq:LhDef}, and $(\bm(1),\ldots,\bm(N))$ is some ordering of the elements of $L_h$.
Then, where $\mu_{\bm}:=\cH^d(\Gamma_\bm)$ for $\bm\in L_h$,  the integrals to be evaluated are, for $i,j\in\{1,\ldots,N\}$,
\begin{align}
\label{eqn:GalerkinElements}
A_{ij}&=\mu_{\bm(i)}^{-1/2}\mu_{\bm(j)}^{-1/2}\int_{\Gamma_{\bm(i)}}\int_{\Gamma_{\bm(j)}} \Phi(x,y) \, \rd\cH^d(y)\rd\cH^d(x),
\end{align}
\begin{align}
\label{eqn:GalerkinElementsRHS}
b_{i}&=-\mu_{\bm(i)}^{-1/2}\int_{\Gamma_{\bm(i)}}
u^i(x)
\, \rd\cH^d(x),
\qquad\qquad
\vec{\varphi}_i= \mu_{\bm(i)}^{-1/2}\int_{\Gamma_{\bm(i)}}
v(x)
\, \rd\cH^d(x).
\end{align}

Since
$u^i$ and $v$ are
smooth in a neighbourhood of $\Gamma$, \eqref{eqn:GalerkinElementsRHS}
can be evaluated using the composite barycentre rule of \cite[Defn~3.1]{HausdorffQuadrature}, cf.~\cite[(97)-(99)]{HausdorffBEM}. This involves decomposing the mesh element $\Gamma_{\bm(i)}$
into smaller self-similar sub-elements whose vector indices are taken from the index set $L_{h_Q}$, for some maximum quadrature element diameter $h_Q\leq h$,
and applying a one-point quadrature rule on each sub-element.
Similarly, provided that $\Gamma_{\bm(i)}$ and $\Gamma_{\bm(j)}$ are disjoint, \eqref{eqn:GalerkinElements} can be evaluated using a tensor product version of this composite barycentre rule (defined in \cite[Defn~3.5]{HausdorffQuadrature}), cf.~\cite[(92)]{HausdorffBEM}.

When $\Gamma_{\bm(i)}$ and $\Gamma_{\bm(j)}$ are not disjoint, the integral in \eqref{eqn:GalerkinElements} is
singular.
Singularity subtraction reduces the problem to the evaluation of
\begin{align}
\label{eqn:GalerkinElementsPhi0}
\int_{\Gamma_{\bm(i)}}\int_{\Gamma_{\bm(j)}} \Phi_{\rm sing}(x,y) \, \rd\cH^d(y)\rd\cH^d(x),
\end{align}
where $\Phi_{\rm sing}(x,y) = -\log(|x-y|)/(2\pi)$ if $n=2$, and $\Phi_{\rm sing}(x,y)=1/(4\pi|x-y|)$ if $n=3$. The integral of $\Phi-\Phi_{\rm sing}$ is regular and can be evaluated using the
tensor product
composite barycentre rule, cf.~\cite[(94)]{HausdorffBEM}.
The treatment of \eqref{eqn:GalerkinElementsPhi0} depends on the nature of $\Gamma$.

If $\Gamma$ is disjoint (e.g.\ the Cantor set, Figure \ref{fig:2Dexs}(e)) then \eqref{eqn:GalerkinElementsPhi0} is singular if and only if $i=j$, 
in which case \eqref{eqn:GalerkinElementsPhi0} can be evaluated using the quadrature rules of \cite[\S4.3]{HausdorffQuadrature}, cf.~\cite[(95)-(96)]{HausdorffBEM}.
These rules exploit the self-similarity of $\Gamma$
and the homogeneity of $\Phi_{\rm sing}$ %
to write the singular integral \eqref{eqn:GalerkinElementsPhi0} in terms of regular integrals, which can be evaluated by the
composite barycentre rule.

If $\Gamma$ is non-disjoint (e.g.\ the Koch curve, or the Koch snowflake, Fig.~\ref{fig:2Dexs}(f), (g)) then the situation is more complicated, because, in addition to the self-interaction case $i=j$, \eqref{eqn:GalerkinElementsPhi0} can also be singular for $i\neq j$, if $\Gamma_{\bm(i)}$ and $\Gamma_{\bm(j)}$ intersect at a point or at a higher-dimensional set. For certain non-disjoint attractors, it holds that: (i) all singular instances of \eqref{eqn:GalerkinElementsPhi0} that arise in our discretization can be written in terms of one of a finite collection of ``fundamental'' singular integrals, which capture the different singular interactions that can occur between mesh elements; and (ii)
these fundamental singular integrals together satisfy a small linear system of equations that can be solved in closed form in terms of regular integrals, which can be evaluated using the composite barycentre rule.
A general algorithm for identifying the fundamental singular integrals and deriving the associated linear system was presented in \cite[Algorithm 1]{NonDisjointQuad}, along with explicit formulas for the Sierpinski triangle, the Vicsek fractal, the Sierpinski carpet, and the Koch snowflake. These formulas were applied in the context of screen scattering problems in \cite[\S7.3]{NonDisjointQuad}. In Appendix \ref{sec:QuadratureAppendix} %
we briefly explain the methodology of \cite{NonDisjointQuad}, and derive explicit formulas for the case of the Koch curve, which was not considered in \cite{NonDisjointQuad}.

The accuracy of the quadrature approximations described above for the evaluation of \eqref{eqn:GalerkinElements} and  \eqref{eqn:GalerkinElementsRHS}
can be controlled by a single parameter $h_Q\leq h$, which represents the maximum diameter of the sub-elements used in the composite barycentre rule.
Using the results of \cite{HausdorffQuadrature} 
one can prove quadrature error estimates. 
\rev{The following theorem is a generalisation of \cite[Thm 5.14]{HausdorffBEM}. While \cite[Thm 5.14]{HausdorffBEM} was stated for the special case where $\Gamma\subset\R^{n-1}\times \{0\}$ is a planar screen, it extends trivially to our more general context, with minor notational adjustments, 
because the planarity of $\Gamma$ was not used in its proof.} 
\rev{We recall that $\Hull(E)$ denotes the convex hull of $E\subset\R^n$, and we denote by $\|\cdot\|_2$ both the Euclidean norm on $\IC^N$ and the induced matrix norm on $\IC^{N\times N}$.}

\begin{thm}
\label{thm:Quadrature}
Let $\Gamma$ be an \rev{OSC-}IFS attractor. %
Let $\underline{\underline{A}}^Q$, $\vb^Q$ and $\vec{\varphi}^Q$ denote the approximations of
\eqref{eqn:GalerkinElements} and  \eqref{eqn:GalerkinElementsRHS}
obtained via the quadrature described above, using a maximum sub-element diameter $0<h_Q\leq h$ in the composite barycentre rule.
\begin{enumerate}[(i)]
\item
Let $u^i$ satisfy the Helmholtz equation in some open neighbourhood of $\Hull(\Gamma)$.
Then %
\begin{align}
\label{eq:bquadEst}
\|\vb-\vb^Q\|_2 \leq h_Q^2
|u^i|_{2,\Hull(\Gamma)}\cH^d(\Gamma)^{1/2},
\end{align}
where $|u^i|_{2,\Hull(\Gamma)}:=\max_{x\in\Hull(\Gamma)}\max_{\substack{\alpha\in \N_0^n\\|\alpha|=2}}|D^\alpha u^i(x)|$.
\item
Let $v$ be $C^\infty$ in a neighbourhood of $\Hull(\Gamma)$.
Given $\psi_N\in V_N=Y_h$, let $J^Q(\psi_N)$ be defined by \eqref{eq:JphiN} with $\vec{\varphi}$ replaced by $\vec{\varphi}^Q$, and let $\vec{\psi}$ denote the coefficient vector of $\psi_N$ in
the basis $\{e^i=\tr^*f^i\}_{i=1}^N$.
Then,
there exists
$C>0$, independent of $h$, $h_Q$, $v$ and $\psi_N$, such that %
\begin{align} \label{eq:JJQ}
|J(\psi_N)-J^Q(\psi_N)| &\leq h_Q^2 |v|_{2,\Hull(\Gamma)} \|\vec\psi\|_2 \cH^d(\Gamma)^{1/2}.
\end{align}
\item
\rev{Suppose that $\Gamma$ is \textit{hull-disjoint}, meaning that $\Hull(\Gamma_m)\cap \Hull(\Gamma_{m'})=\emptyset$ for every $m\neq m'\in\{1,\ldots,M\}$.} 
Then there exists
$C>0$, independent of $h$ and $h_Q$, such that
\begin{align}
\label{eq:AquadEst2}
\|\underline{\underline{A}} -\underline{\underline{A}} ^Q\|_2 \leq Ch_Q h^{-(n-1)}
\cH^d(\Gamma).
\end{align}
If, further, $\Gamma$ is homogeneous, then
\begin{align}
\label{eq:AquadEst1}
\|\underline{\underline{A}} -\underline{\underline{A}} ^Q\|_2 \leq Ch_Q^2 h^{-n}
\cH^d(\Gamma).
\end{align}
\end{enumerate}
\end{thm}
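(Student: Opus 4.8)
The plan is to run the proof of \cite[Thm~5.14]{HausdorffBEM} in our setting; the only difference is that $\Hull(\Gamma)\subset\R^n$ is a genuine $n$-dimensional convex body rather than a subset of a hyperplane, and since that planarity is used nowhere in \cite{HausdorffBEM} (nor in the underlying quadrature analysis of \cite{HausdorffQuadrature}) everything transfers with purely notational changes. The one analytic input is the error estimate for the composite barycentre rule from \cite{HausdorffQuadrature}: for $f$ that is $C^2$ on a neighbourhood of $\Hull(\Gamma_\bm)$, the composite barycentre rule with maximal sub-element diameter $h_Q$ approximates $\int_{\Gamma_\bm}f\,\rd\cH^d$ with error at most $h_Q^2\,\cH^d(\Gamma_\bm)\,|f|_{2,\Hull(\Gamma_\bm)}$ (the linear Taylor term integrating to zero against each sub-element because the node is its $\cH^d$-barycentre), and its tensor-product version approximates $\int_{\Gamma_\bm}\int_{\Gamma_{\bmp}}F\,\rd\cH^d\rd\cH^d$ with an error bounded, when $F$ is $C^2$ on a neighbourhood of $\Hull(\Gamma_\bm)\times\Hull(\Gamma_{\bmp})$, by $h_Q^2\,\cH^d(\Gamma_\bm)\cH^d(\Gamma_{\bmp})\sup|D^2F|$ up to an absolute constant.

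Parts (i) and (ii) are then immediate. For (i): by \eqref{eqn:GalerkinElementsRHS} and the above, each component satisfies $|b_i-b_i^Q|\le\mu_{\bm(i)}^{-1/2}\,h_Q^2\,\mu_{\bm(i)}\,|u^i|_{2,\Hull(\Gamma)}=h_Q^2\,|u^i|_{2,\Hull(\Gamma)}\,\mu_{\bm(i)}^{1/2}$, where $\mu_\bm:=\cH^d(\Gamma_\bm)$, so squaring, summing over $i$, and using $\sum_{i=1}^N\mu_{\bm(i)}=\cH^d(\Gamma)$ (the mesh elements cover $\Gamma$ with $\cH^d$-null pairwise overlaps) gives \eqref{eq:bquadEst}. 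For (ii): by \eqref{eq:JphiN}, $J(\psi_N)-J^Q(\psi_N)=\vec{\psi}^T(\vec{\varphi}-\vec{\varphi}^Q)$, and $\|\vec{\varphi}-\vec{\varphi}^Q\|_2\le h_Q^2\,|v|_{2,\Hull(\Gamma)}\,\cH^d(\Gamma)^{1/2}$ by the same computation as for $\vb$ (here $v$ being $C^\infty$ near $\Hull(\Gamma)$ is exactly the standing hypothesis), so Cauchy--Schwarz yields \eqref{eq:JJQ}.

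For (iii), hull-disjointness gives $\Gamma_m\cap\Gamma_{m'}=\emptyset$ for $m\ne m'$, hence (iterating the similarities) $\Gamma_{\bm(i)}\cap\Gamma_{\bm(j)}=\emptyset$ for every $i\ne j$, so the only singular entries of $\underline{\underline{A}}$ are diagonal. The quantitative point is that $\dist\big(\Hull(\Gamma_{\bm(i)}),\Hull(\Gamma_{\bm(j)})\big)\gtrsim h$ for $i\ne j$: if $\bm(i)$ and $\bm(j)$ first differ at position $p$, the common prefix $(m_1,\ldots,m_{p-1})$ is a strict prefix of $\bm(i)\in L_h$, hence $\diam(\Gamma_{(m_1,\ldots,m_{p-1})})>h$, i.e.\ $\rho_{m_1}\cdots\rho_{m_{p-1}}>h/\diam(\Gamma)$, while applying this similarity to the hull-disjointness of the $p$-th children shows $\dist(\Hull(\Gamma_{\bm(i)}),\Hull(\Gamma_{\bm(j)}))\ge\rho_{m_1}\cdots\rho_{m_{p-1}}\min_{m\ne m'}\dist(\Hull(\Gamma_m),\Hull(\Gamma_{m'}))$. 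On such a separated product $|x-y|\gtrsim h$, so $|D^\alpha_{x,y}\Phi(x,y)|\lesssim h^{-(n-2+|\alpha|)}$ for $n=3$ and $\lesssim h^{-|\alpha|}$ ($|\alpha|\ge1$) for $n=2$; feeding the $|\alpha|=2$ bound ($\lesssim h^{-n}$) into the tensor-product barycentre estimate and dividing by $\mu_{\bm(i)}^{1/2}\mu_{\bm(j)}^{1/2}$ gives $|A_{ij}-A_{ij}^Q|\lesssim h_Q^2h^{-n}\mu_{\bm(i)}^{1/2}\mu_{\bm(j)}^{1/2}$ for $i\ne j$. The diagonal entries are handled by the singularity subtraction of \S\ref{sec:Quadrature} (the regular part $\Phi-\Phi_{\rm sing}$ by the tensor-product barycentre rule, the part $\Phi_{\rm sing}$ by the self-similar singular quadrature of \cite[\S4.3]{HausdorffQuadrature}, which reduces the self-interaction to regular integrals), contributing an error of the same order. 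Since $\underline{\underline{A}}-\underline{\underline{A}}^Q$ is symmetric, $\|\underline{\underline{A}}-\underline{\underline{A}}^Q\|_2\le\max_i\sum_j|A_{ij}-A_{ij}^Q|$, and quasi-uniformity ($\mu_\bm\asymp h^d$ on $L_h$, from the $d$-set property, whence $N\lesssim\cH^d(\Gamma)h^{-d}$) turns the entrywise bound into $\|\underline{\underline{A}}-\underline{\underline{A}}^Q\|_2\lesssim h_Q^2h^{-n}\cH^d(\Gamma)$ in the homogeneous case, via $\sum_j\mu_{\bm(j)}^{1/2}\le N^{1/2}\cH^d(\Gamma)^{1/2}$. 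In the general case one uses instead the coarser, first-order composite-rule bound (error $\lesssim h_Q$ times the first derivatives of the integrand, $\lesssim h^{-(n-1)}$), losing a factor $h_Q/h$ and giving $\lesssim h_Q h^{-(n-1)}\cH^d(\Gamma)$; homogeneity recovers the extra order by allowing the near-diagonal interactions to be resolved on a single geometric scale and the contributions to be summed as a geometric series, exactly as in \cite[Thm~5.14]{HausdorffBEM}.

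The routine parts are (i) and (ii). The main obstacle is (iii): establishing the uniform $\gtrsim h$ separation of mesh-element hulls from hull-disjointness together with the structure of $L_h$, propagating the singularity of $\Phi$ and its derivatives through the tensor-product, composite, and self-similar singular quadrature rules to obtain sharp entrywise bounds (the diagonal singular entries being the most delicate), and then aggregating these into a spectral-norm bound while correctly distinguishing the homogeneous (second-order) from the non-homogeneous (first-order) case. All of this is carried out in \cite[Thm~5.14]{HausdorffBEM} and \cite{HausdorffQuadrature} without using planarity, so it carries over directly.
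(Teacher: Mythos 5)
Your proposal matches the paper's own treatment: the paper gives no detailed proof, simply observing (in the remark preceding the theorem) that the statement is \cite[Thm 5.14]{HausdorffBEM} with minor notational adjustments, and that it transfers because the planarity of $\Gamma$ is nowhere used in that proof — which is exactly the core of your argument. Your additional reconstruction of the underlying mechanics (the second-order barycentre-rule bound for (i)--(ii), the $\gtrsim h$ hull separation from the structure of $L_h$, and the Schur/row-sum aggregation for (iii)) is consistent with the cited source and goes beyond what the paper itself writes down.
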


\rev{While Theorem \ref{thm:Quadrature}(iii) is stated only for hull-disjoint attractors (because that was the setting considered in \cite{HausdorffBEM}), we expect it should be possible to prove similar results for non-disjoint OSC-IFS attractors, by combining the results of \cite[\S4.3]{HausdorffQuadrature} with those of \cite[\S6]{NonDisjointQuad}. 
Indeed, numerical experiments (not reported here) suggest that $\|\underline{\underline{A}} -\underline{\underline{A}} ^Q\|_2 = O(h_Q^2)$ as $h_Q\to 0$ for all the examples considered in \S\ref{sec:Numerics}, including the Koch snowflake, which is both non-disjoint and non-homogeneous.
However, we leave the proof of this for future work.}

In principle,
the quadrature error estimates of
Theorem \ref{thm:Quadrature}
could be combined with the semi-discrete convergence estimates of Theorem \ref{thm:IEMConvergence} to obtain a fully discrete analysis for our IE method (under appropriate assumptions, such as disjointness), with conditions on how small $h_Q$ should be in order to maintain the convergence rates in Theorem \ref{thm:IEMConvergence}.
For brevity we do not embark on such an analysis here, but refer the interested reader to \cite[\S5.4]{HausdorffBEM} where the analogous analysis was carried out for screen problems.
In practice, our numerical results in \S\ref{sec:Numerics} suggest that in many cases it may be sufficient to decrease $h_Q$ in proportion to $h$ in order to achieve the predicted rates.

\section{Numerical results}
\label{sec:Numerics}

In this section we present numerical results obtained using our Galerkin IEM for scattering by various fractals $\Gamma$, each 
\rev{an OSC-IFS} attractor%
\footnote{Our method is implemented in Julia and is available to download at \href{https://github.com/AndrewGibbs/IFSIntegrals}{github.com/AndrewGibbs/IFSIntegrals}}. 
For each example we assume plane wave incidence, i.e.\ the datum $g$ is as in \eqref{eqn:gDefScatteringProblem} with $u^i(x)=e^{\ri k\vartheta\cdot x}$ and $|\vartheta|=1$, and we compute the Galerkin IEM solution by solving \eqref{eq:cvec}, using the piecewise-constant quasi-uniform-mesh approximation space $V_N=Y_h$, with $Y_h$ defined as in \rf{eq:YHDef}, so that each element is a scaled copy of $\Gamma$, and with the basis functions as defined above \eqref{eqn:GalerkinElements}. We approximate the  scattered field $u$ and/or the far field $u^\infty$,  which each (as discussed above \eqref{eq:Jdef}) take the form of the linear functional \eqref{eq:Jdef} with $\varphi$ given by \eqref{eq:varphi}, by the discretisation \eqref{eq:JphiN}. These calculations require evaluation of the integrals  \eqref{eq:Galerkin}, \eqref{eq:GalerkinRHS}, and \eqref{eq:varphivecdefn}. We approximate these by the methods detailed in \S\ref{sec:Quadrature}, using a maximum quadrature element size $h_Q=C_Qh$. \rev{We choose} $C_Q:=\max\{\rho_m^2: m=1,\ldots, M\}$, where $\rho_m$, $m=1,..,M$, are the contraction factors of the IFS, except for the higher wavenumber simulations \rev{for $k\geq 20$ in Figures 
\ref{fig:CantorKoch}, \ref{fig:KochSnowflake} and 
\ref{fig:Tetrahedron}}, where we use $C_Q:=\max\{\rho_m^4: m=1,\ldots, M\}$. 
To validate the accuracy of our quadrature, a number of our experiments were repeated using smaller values for $h_Q$, and the difference in the results was found to be negligible.%

When we plot errors we use as our ``exact'' solution a more accurate Galerkin-IEM solution. %
Most of our experiments are for homogeneous attractors, in which case our mesh is uniform with $N=M^\ell$, for some $\ell\in\N$, we denote the corresponding approximate scattered, total, and far fields by $u_\ell$, $u^t_\ell := u^i+u_\ell$, and $u^\infty_\ell$, respectively, and the ``exact'' solution is the solution for $\ell=\ellref$, for some $\ellref$ that we note for each example. Where we plot $L^\infty$ relative error estimates these are
	\begin{equation}\label{eq:relerrs}
		\frac{\|u_{\ellref}-u_\ell\|_{L^\infty}}{\|u_{\ellref}\|_{L^\infty}}\quad\text{and}\quad\frac{\|u^\infty_{\ellref}-u^\infty_\ell\|_{L^\infty}}{\|u^\infty_{\ellref}\|_{L^\infty}},
\end{equation}
where the $L^\infty$ norms are discrete norms taken over a set of points detailed for each example.

\subsection{Examples in 2D space} \label{sec:2D}

\begin{figure}%
\centering
\subfl{}{\includegraphics[height=50mm]{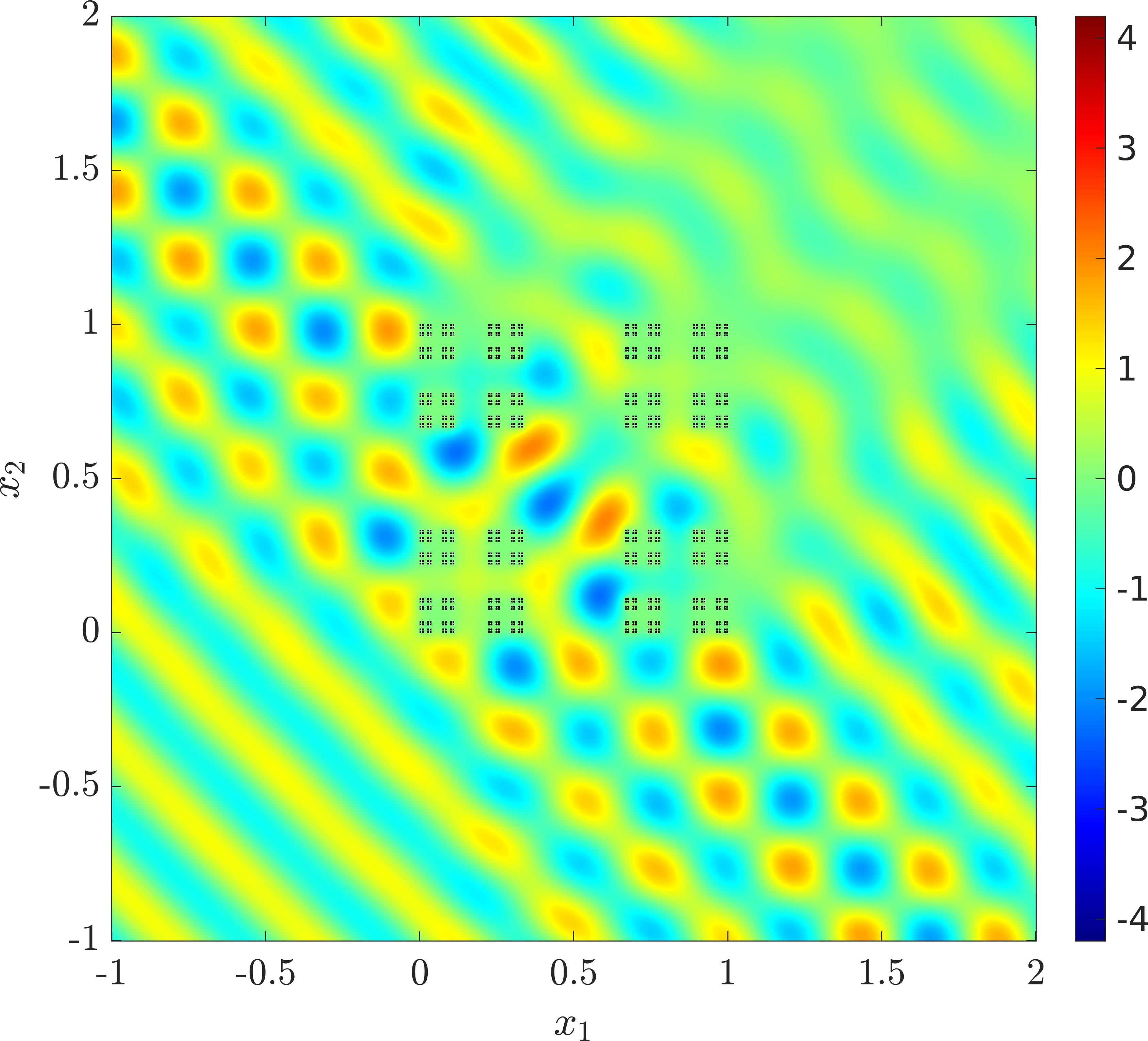}}
\hspace{2mm}
\subfl{}{\includegraphics[height=50mm]{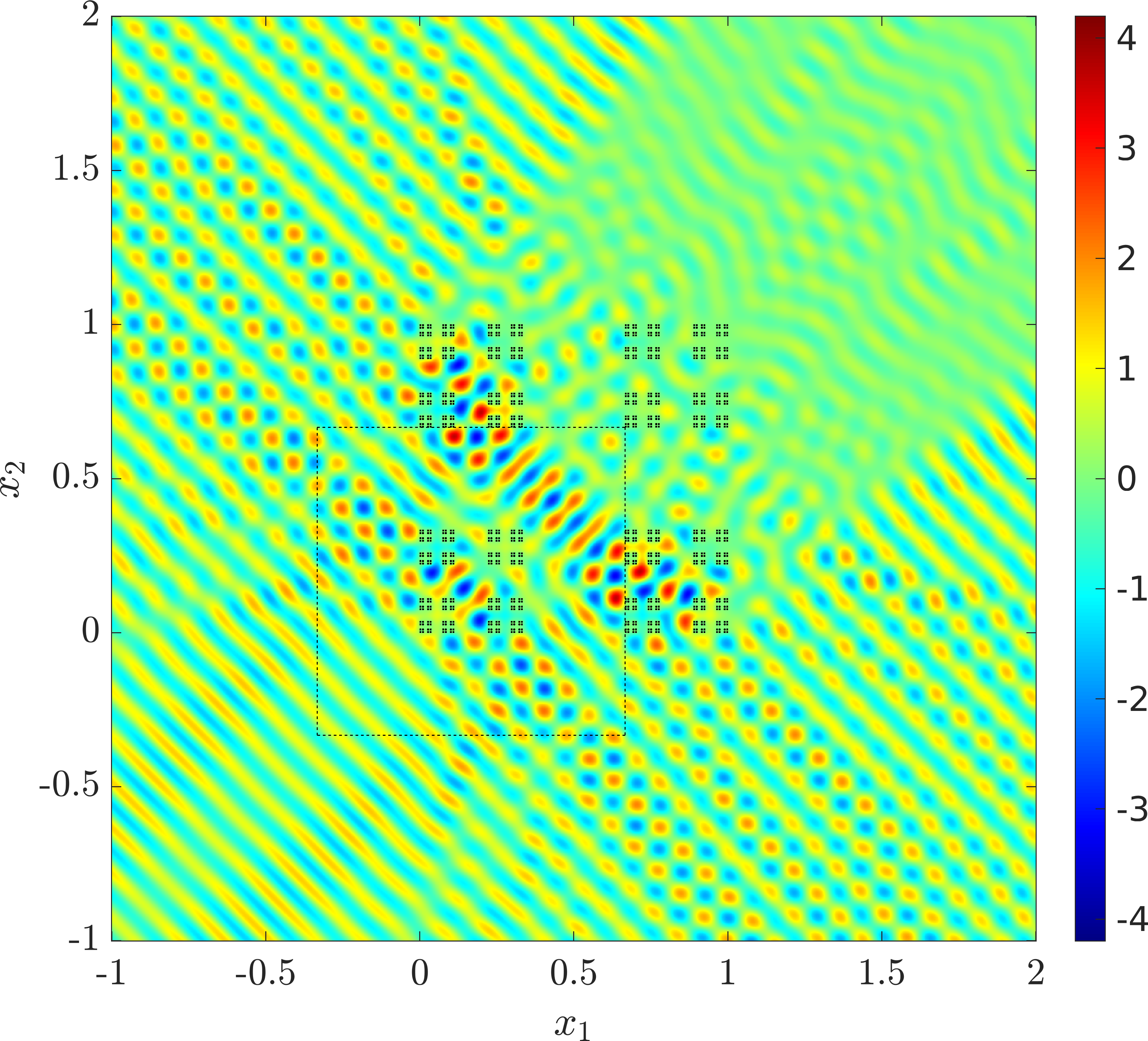}}\\
\subfl{}{\includegraphics[height=50mm]{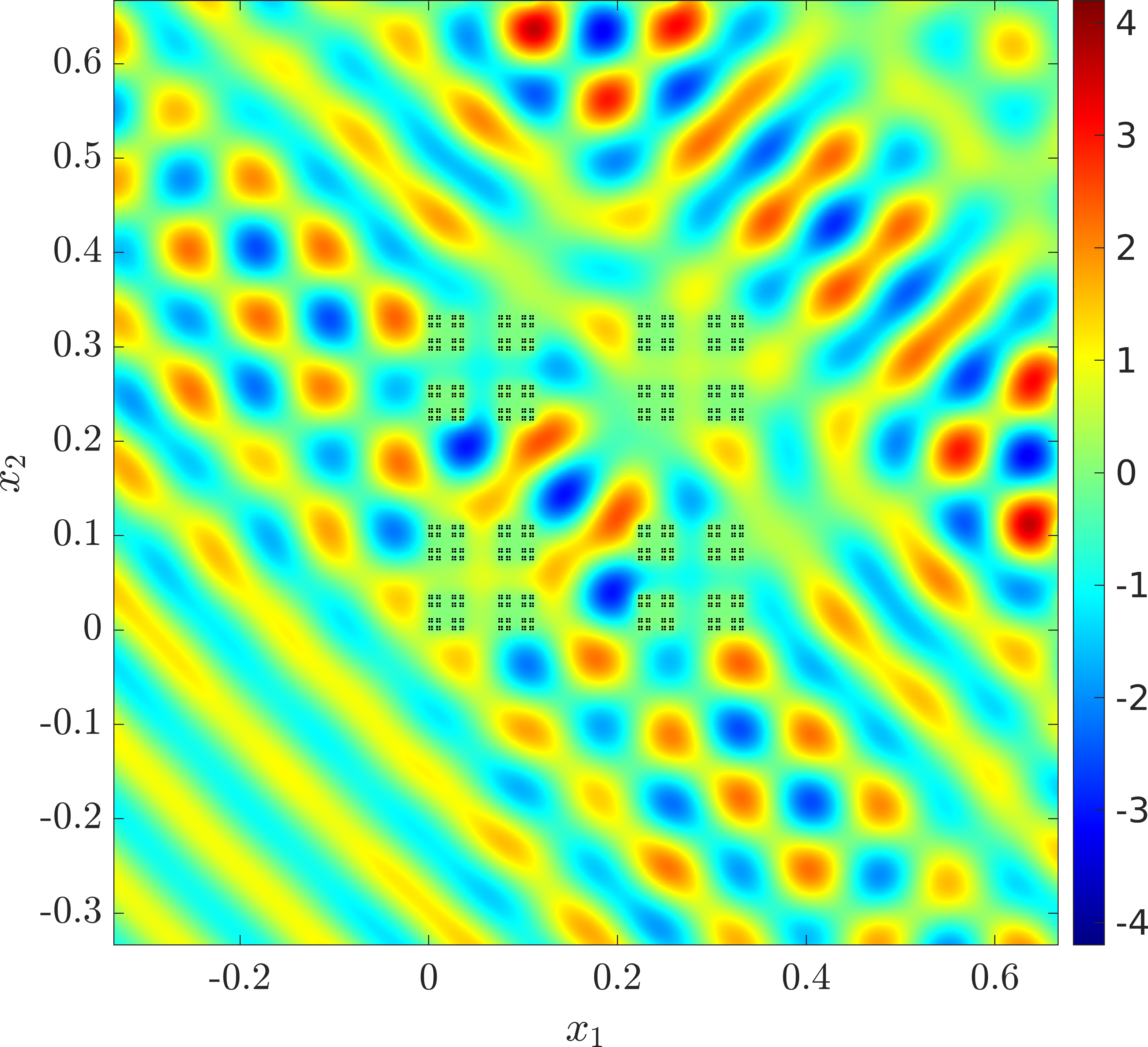}}
\hspace{2mm}
\subfl{}
{\includegraphics[height=42.5mm]%
{Koch_curve_domain_plot_1000pixels_ut}}
\caption{Scattering by a middle third Cantor dust, (a)-(c), and a Koch curve, (d). 
See \rev{\S\ref{sec:2D}(\ref{sec:DustKoch})}. %
\label{fig:CantorKoch}
}
\end{figure}

\subsubsection{Cantor dust and Koch curve}
\label{sec:DustKoch}

Plots of ${\rm Re}(u_\ell^t)$ are shown in  Figure \ref{fig:CantorKoch} for two fractal scatterers. The first, see (a)-(c), is the middle-third Cantor dust, $\Gamma = C\times C$, where $C$ is the Cantor set defined in Example \ref{ex:Cantor}, and the other, panel (d), is the Koch curve of Example \ref{ex:Koch}. For both scatterers the IFS is homogeneous, 
with $M=4$, $\rho=1/3$, and hence $d=\dimH(\Gamma)=\log4/\log3\approx 1.26$\rev{; see, e.g., \cite[Eqn.~(125)]{HausdorffBEM} for the Cantor dust IFS}.
In all plots  the incident plane wave has  direction $\vartheta = (1,1)/\sqrt{2}$, and we take $k=20$ and $\ell=4$ in (a) and (d), $k=60$ and $\ell=5$ in (b)-(c), so that $h=1/3^4\approx 0.0123$ in (d), $h= \sqrt{2}/3^4\approx 0.0175$ in (a), and $h= \sqrt{2}/3^5\approx 0.00582$ in (b)-(c). The key difference between (a) and (b) is the tripling  of $k$, so that the wavelength $\lambda=2\pi/k$ reduces from $\lambda\approx 0.314$ in (a) to $\lambda \approx 0.105$ in (b). The wave field in (a) does not appear to resolve details  beyond level 2, i.e. it appears that $u^t\approx 0$ in the convex hull of each of the sixteen $\Gamma_\bm$ with $\bm\in I_2$ (in the notation of \S\ref{sec:GalerkinBounds}). This is unsurprising as $u^t=0$ on each level 2 component, $\Gamma_\bm$, with $\bm\in I_2$, and each is comprised of four level 3 components on which $u^t=0$ and whose separation is only $1/3^3 \approx 0.118 \lambda$, i.e., is a small fraction of $\lambda$. In (b), where $\lambda$ is reduced by a factor 3, the wave field appears to resolve detail down to level 3, i.e.\ to resolve details of $1/3$ the size. To see this more clearly the region inside the dotted boundary is blown up by a factor 3 in (c). After this scaling in fact, thanks to the incidence direction we have chosen, the part of the plot (c) in $[-1/3, 1/3]^2$ is very similar to the field plotted in (a) in $[-1,1]^2$.

\begin{figure}%
\centering
\subfl{Scattered-field relative $L^\infty$ errors}{\includegraphics[width=0.37\textwidth]{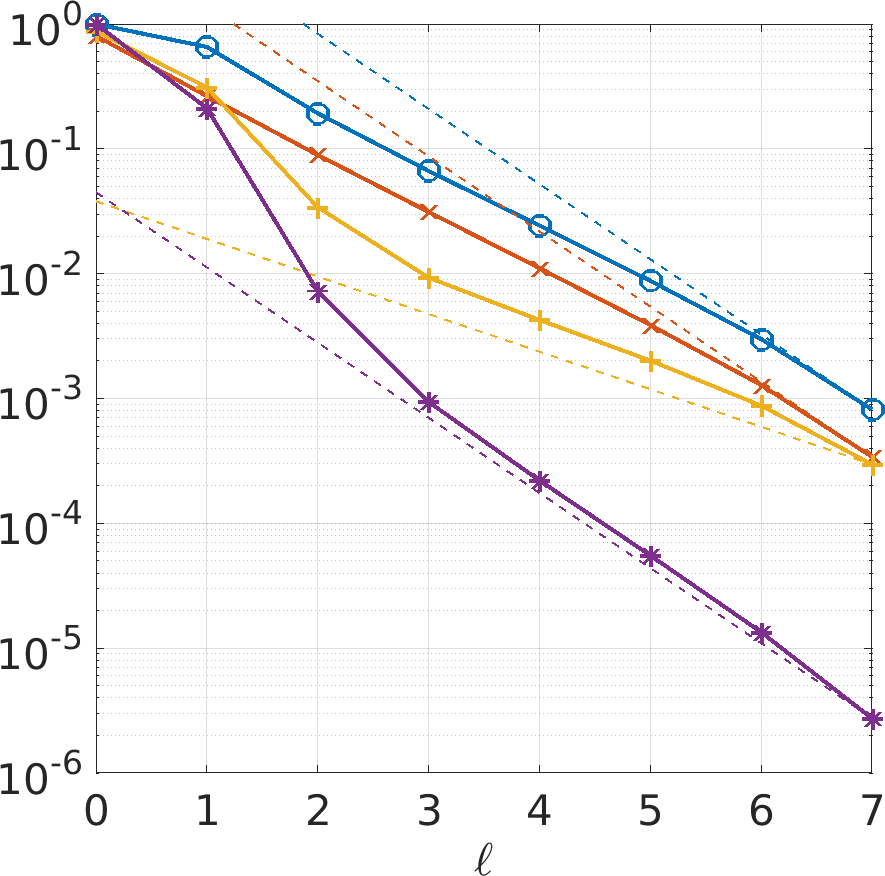}}
\hspace{2mm}
\subfl{Far-field relative $L^\infty$ errors}{\includegraphics[width=0.59\textwidth]{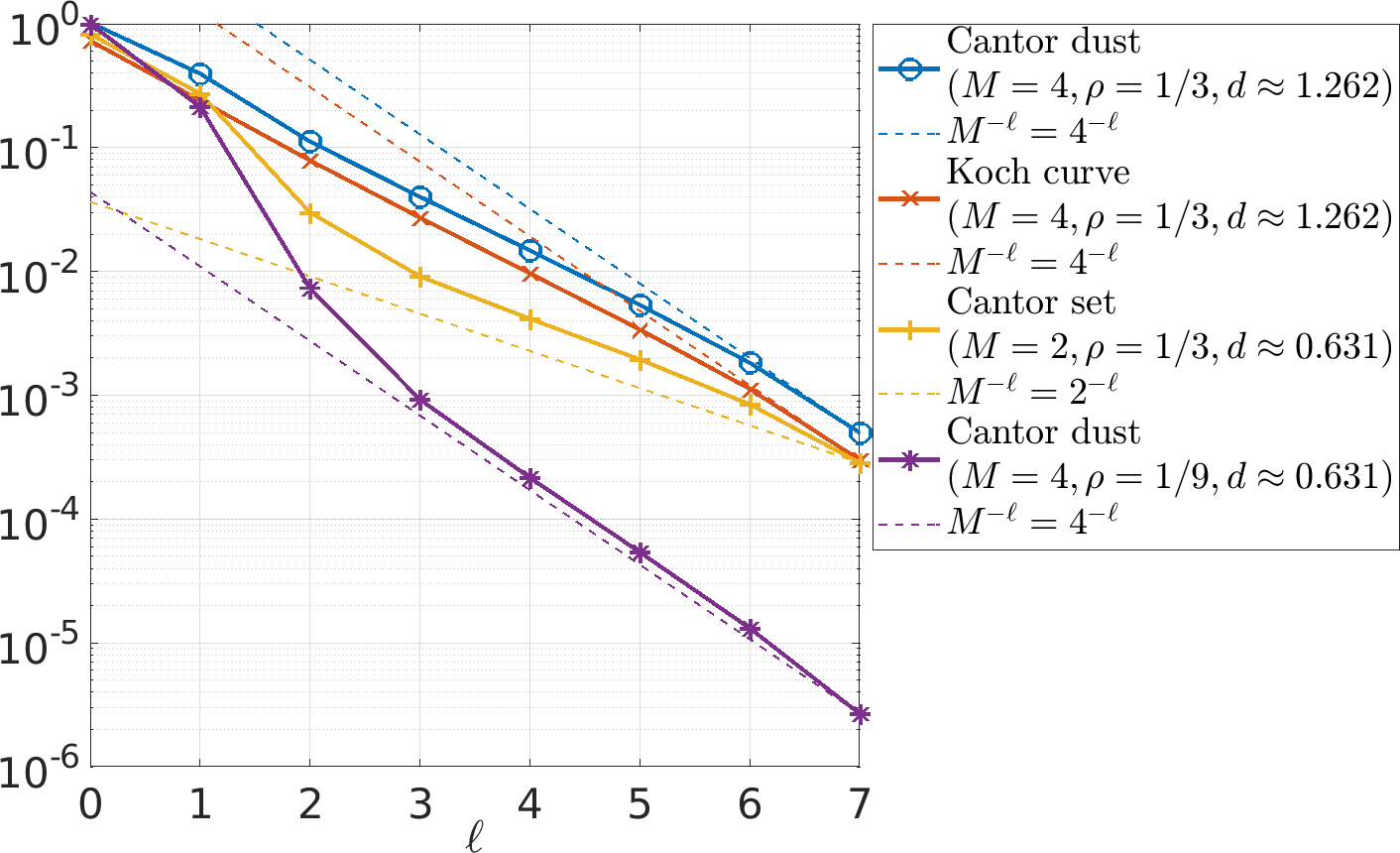}}
\caption{Plots of the discrete relative-error estimates \eqref{eq:relerrs} for a range of 2D examples\rev{; see \S\ref{sec:2D}(\ref{sec:convplot})}.}
\label{fig:2DErrorPlots}
\end{figure}

\subsubsection{Convergence plots} \label{sec:convplot}
In Figure \ref{fig:2DErrorPlots} we show the discrete $L^\infty$ relative errors \eqref{eq:relerrs} for a range of 2D examples, namely the Koch curve (Ex.~\ref{ex:Koch}), the Cantor set $C\times\{0\}$ (Ex.~\ref{ex:Cantor}), and the Cantor dust $C\times C$ with two different values of $\rho$, with  plane wave incidence direction $\vartheta=(1,-1)/\sqrt{2}$ and wavenumber $k=5$. To compute the scattered-field relative error given by \eqref{eq:relerrs}  we sample  at $50$ points equispaced along each edge of the square $(-1,2)\times(-1.5,1.5)$ (200 points in total), and for the far-field we sample at $50$ equispaced points on the circle $\IS^1$. We use, for each scatterer, $\ellref=\ell_{\max}+2$, where $\ell_{\max}$ is the largest $\ell$ for which results are shown.

Also plotted in Figure \ref{fig:2DErrorPlots} are graphs of $cM^{-\ell}$, with $c>0$ chosen  to fit each error curve. In the cases where Theorem \ref{thm:ApproxDisjoint} applies (all except the Koch curve), 
\rev{then if Hypothesis \ref{conj} holds}  
we expect, by Remark \ref{rem:ConvRates}, errors to be roughly proportional to  $M^{-\ell}$. In the examples with $d\approx 0.631$ the relative errors do seem to be proportional to $M^{-\ell}$ for larger $\ell$, supporting 
\rev{Hypothesis \ref{conj}} 
in these cases. 
The errors in the two examples with $d\approx 1.262$ seem to decrease at the same rate, suggesting the same solution regularity \rev{in both cases}, and that the error estimate of Theorem \ref{thm:ApproxDisjoint} may hold also for the Koch curve even though $\Gamma$ is not disjoint \rev{in that case}. But the convergence is slower than $M^{-\ell}$,  suggesting that 
\rev{Hypothesis \ref{conj} does not hold in these cases.}

\subsubsection{The Koch snowflake} \label{sec:Koch}

\begin{figure}
\centering
\subfl{${\rm Re}(u^t)$ for volume approach}
{\includegraphics[width=0.475\textwidth]{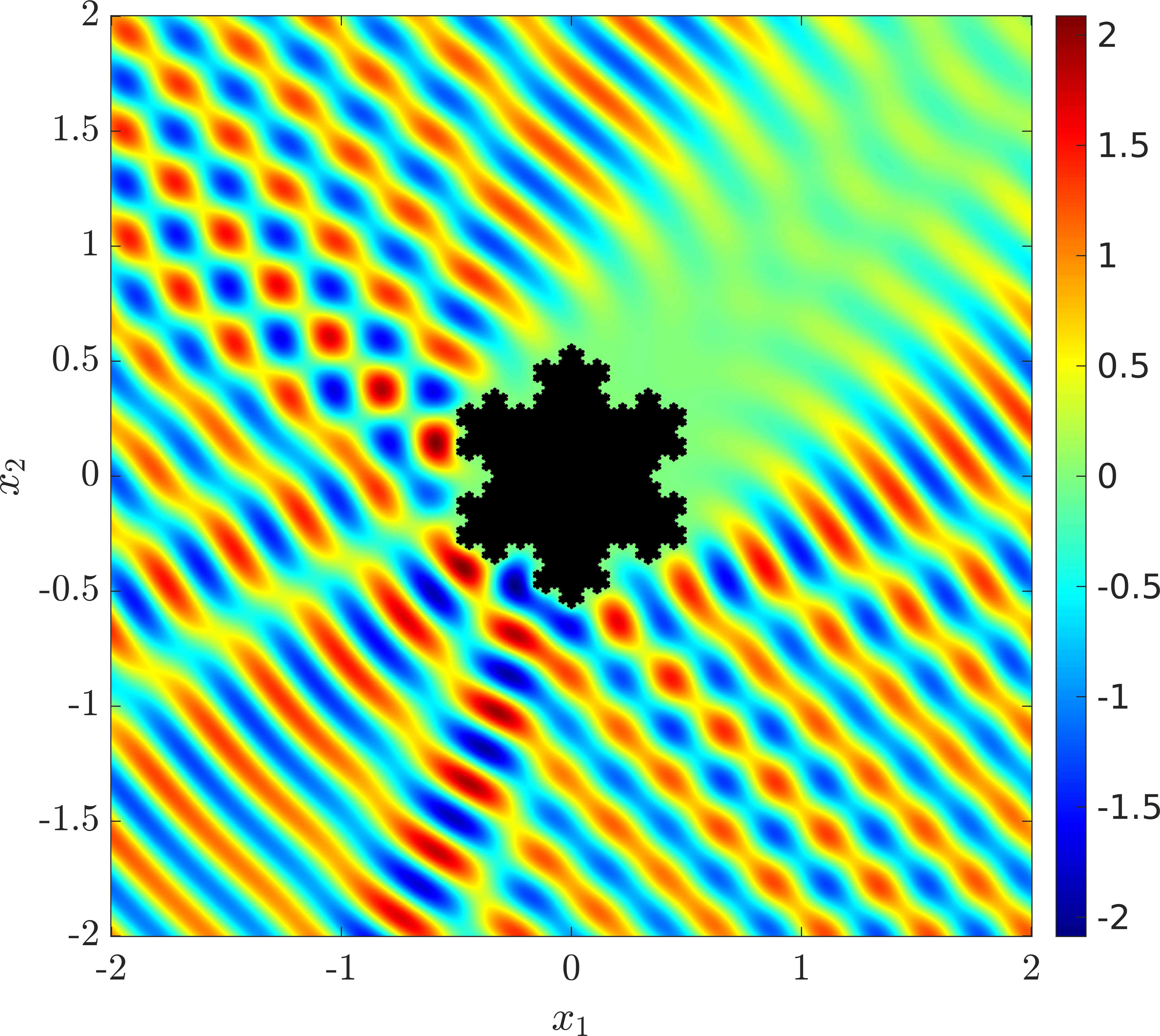}}
\hspace{3mm}
\subfl{${\rm Re}(u^t)$ for boundary approach}
{\includegraphics[width=0.475\textwidth]{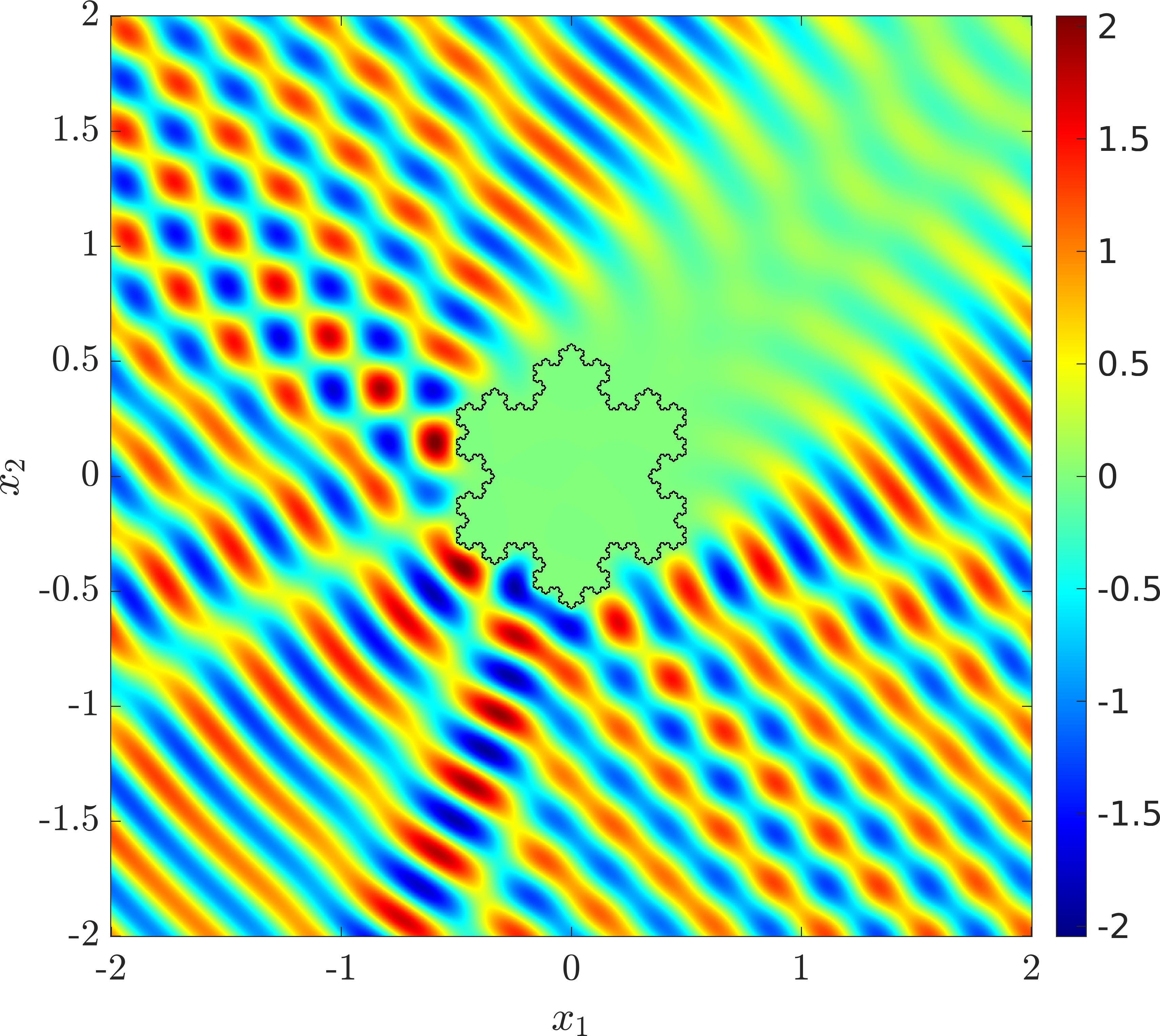}}\\
\subfl{$|\phi_N|$ for volume approach, $h = 0.22$}{\includegraphics[width=.48\textwidth]{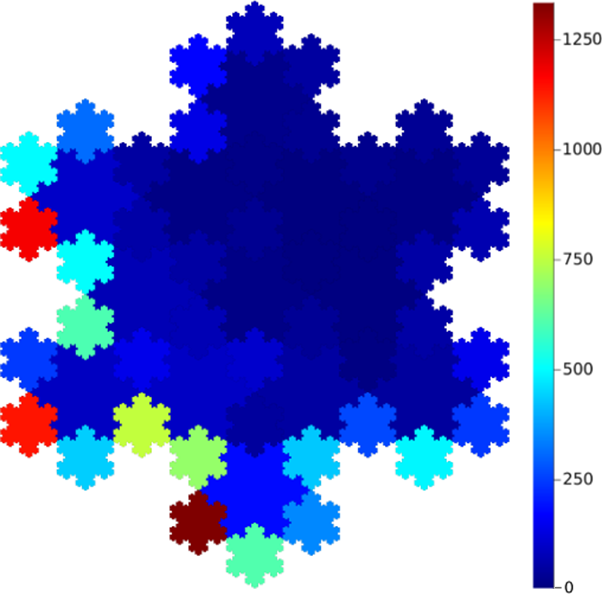}}
\hspace{2mm}
	\subfl{$|\phi_N|$ for volume approach, $h=0.074$}{\includegraphics[width=.48\textwidth]{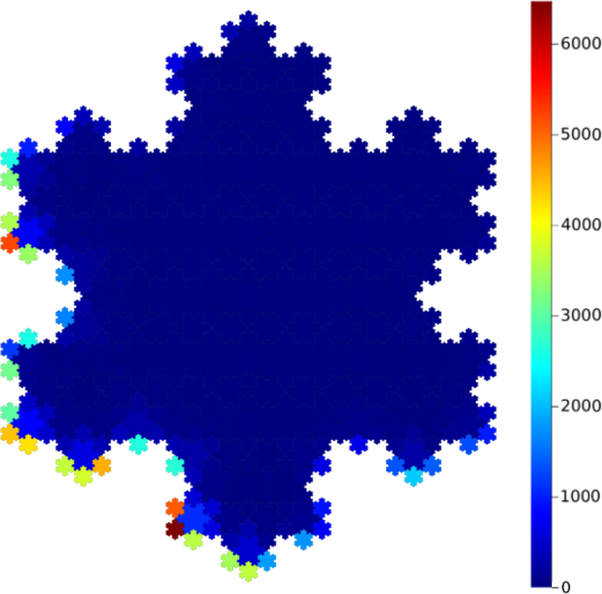}}\\
	\subfl{$|\phi_N|$ for volume approach, $h=0.025$}{\includegraphics[width=.48\textwidth]{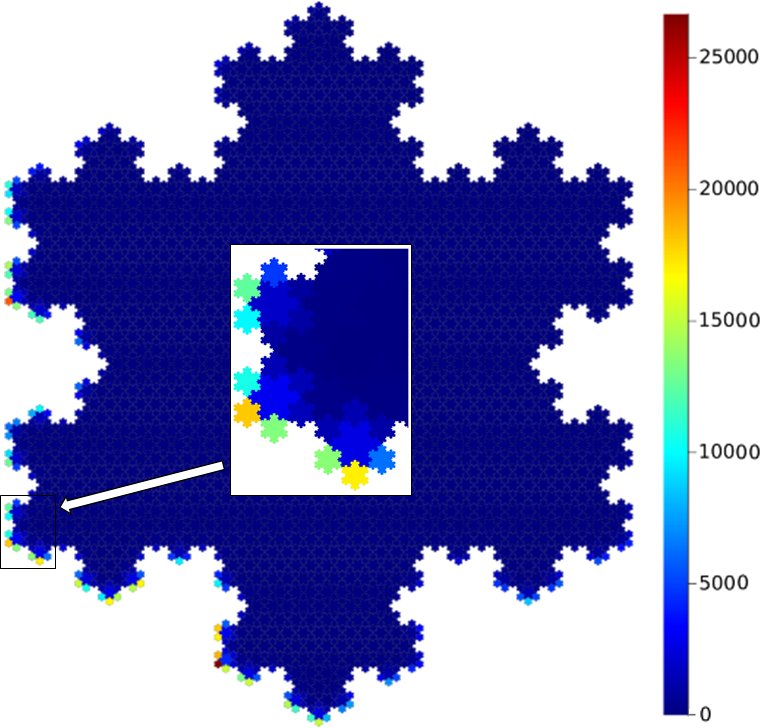}}
\hspace{2mm}
\subfl{Far-field relative $L^\infty$ errors}{\includegraphics[width=.48\textwidth]%
{Kochs_ff_relerr_k5_2d_bdry_ref}}
\caption{\rev{Scattering by the Koch snowflake; see \S\ref{sec:2D}(\ref{sec:Koch}). (a) and (b) show ${\rm Re}(u^t)$ computed with both approaches, for $k=20$. For the same $k$, (c)-(e) show
$|\phi_N|$, where $\phi_N$ is the Galerkin IEM solution for the volume approach, for three different $h$ values. 
(f) plots the far-field relative $L^\infty$ error for both approaches against the total number of degrees of freedom, for $k=5$.}
\label{fig:KochSnowflake}
}
\end{figure}

In Figure \ref{fig:KochSnowflake} we show approximations to ${\rm Re}(u^t)$ for scattering by a Koch snowflake for the same incident plane wave as Figure \ref{fig:CantorKoch}, computed in two different ways, illustrating Remark \ref{rem:Alt}. In Figure \ref{fig:KochSnowflake}(a) we solve the IE by our Galerkin IEM with $N=4039$ on the solid Koch snowflake $\Gamma$, shown in Fig.~\ref{fig:2Dexs}(g), which  is the attractor of a non-homogeneous IFS with $M=7$ as noted in Ex.~\ref{ex:KochSnowflake}. We refer to this as the \emph{volume approach}. In Figure \ref{fig:KochSnowflake}(b) we solve the IE by our Galerkin IEM on $\partial \Gamma$, the boundary of the snowflake. We refer to this as the \emph{boundary approach}. In contrast to all our other examples, $\partial \Gamma$ is not an IFS attractor, but it  \emph{is} the union of three IFS attractors (rotated copies of the Koch curve of Ex.~\ref{ex:Koch}, each the attractor of an IFS with $M=4$), and so $\partial \Gamma$ \emph{is} a \rev{$d'$-set}, with $d':=\dimH(\partial \Gamma)=\log(4)/\log(3)\approx 1.262$. In Figure \ref{fig:KochSnowflake}(b) we use $M^4=256$ degrees of freedom on each Koch curve comprising $\partial \Gamma$, so that $N=768$. 

In the boundary approach, to assemble the Galerkin matrix $\underline{\underline{A}} $, we view it as a $3\times 3$ block matrix, each block corresponding to interactions between two of the three Koch curves. The diagonal blocks correspond to self-interactions for a single Koch curve, and these matrix elements are approximated by quadrature as described at the beginning of the section. The off-diagonal blocks are assembled using the composite barycentre rule using the same value of $h_Q$ as for the diagonal blocks. %

Proposition \ref{prop:components2} and Remark \ref{rem:Ass} tell us that the IE solution in the volume approach is supported on $\partial \Gamma$, and that the IE solutions and scattered fields for the two approaches coincide %
as long as $k^2$ is not a Dirichlet eigenvalue of $-\Delta$ in $\Omega_-:=\Gamma^\circ$, the interior of the snowflake. It appears that $k=20$ is not one of these resonant wavenumbers as the fields in Figure \ref{fig:KochSnowflake}(a) and (b) coincide and the field in $\Omega_-$ is zero in (b), in agreement with the boundary condition for the volume approach that $u^t\in \tH^1(\Omega)$, where $\Omega = \Gamma^c$. 

In Figure \ref{fig:KochSnowflake}(c)-(e) we plot the modulus of the piecewise-constant Galerkin IEM solution $\phi_N\in V_N=Y_h\subset H_\Gamma^{-1}$ corresponding to Figure \ref{fig:KochSnowflake}(a), for $h \approx 0.22$, $h\approx0.074$, and $h\approx0.025$. Since $\phi_N$ is constant on each element, the meshes used for each $h$ are discernible in Figure \ref{fig:KochSnowflake}(c)-(e) (each element is a scaled copy of the original snowflake $\Gamma$). Each solution $\phi_N$ is highly peaked near $\partial \Gamma$, especially where  $\partial \Gamma$ is illuminated by the incident wave, and is much smaller away from $\partial \Gamma$; these effects are increasingly marked as $h$ is reduced. This is unsurprising as, by Theorem \ref{thm:Convergence}, $\phi_N\to \phi$ in $H_\Gamma^{-1}$ as $h\to0$, and $\phi$ is supported in $\partial \Gamma$.  %

In Figure \ref{fig:KochSnowflake}(f) we explore convergence of the far-field approximations $u_\ell^\infty$ computed by the volume and boundary approaches, showing computations for $h=\diam(\Gamma)/3^{\ell/2}$ for $\ell=0,1,\ldots, 6$ for the volume approach, and $h=3\diam(\Gamma)/3^\ell$ for $\ell=0,1,\ldots,4$ for the boundary approach. For each approach we use as our ``exact'' solution the boundary approach solution with $\ell=\ellref = 7$. Figure \ref{fig:KochSnowflake}(f) shows  the relative errors \eqref{eq:relerrs} in $u_\ell^\infty$ for both methods, for plane wave incidence direction $\vartheta=(1,-1)/\sqrt{2}$ and $k=5$, %
with the $L^\infty$ norms computed using the same discrete set of points as in %
Figure \ref{fig:2DErrorPlots}. In the volume approach every second increment in $\ell$ has a smaller reduction in error. At these increments, the elements adjacent to $\partial \Gamma$, which is the support of the solution, are not being subdivided, as a consequence of the definition of the approximation space $Y_h$. 
The convergence rate results of Theorem \ref{thm:ApproxDisjoint} apply to the volume approach but not to the boundary approach as $\partial \Gamma$ is not an IFS attractor. But, assuming these estimates apply in both cases, and if 
\rev{Hypothesis \ref{conj} holds}, 
so that the solution $\phi\in H_{\partial \Gamma}^{-1}$ has its maximum possible regularity, then 
\rev{as in} Remark \ref{rem:ConvRates} we anticipate errors decreasing roughly in proportion to $h^{d'}$ in both cases, i.e.\ proportional to $N^{-1}$ and $N^{-d'/2}\approx N^{-0.631}$ in the respective boundary and volume cases.   Both approaches appear to be converging somewhat more slowly than these conjectured theoretical rates, but, \rev{of the two},  the boundary approach is certainly converging more rapidly. 
It is plausible that the boundary approach, in which only $\partial \Gamma$ is discretised, should  be more efficient, given that the solution is supported on $\partial \Gamma$. But the IE on $\partial \Gamma$ is not well-posed for all $k>0$, in contrast to the IE on $\Gamma$, and there must be scope to improve the efficiency of the volume approach by using graded versions of our meshes, concentrating elements near $\partial \Gamma$ (cf.~\cite{KhMe:03,cefalo2014optimal}).

\subsection{Examples in 3D space} \label{sec:3D}

\begin{figure}
\centering
\subfl{$\rho=1/2$ and $d=2$}{\includegraphics[width=.45\textwidth]
{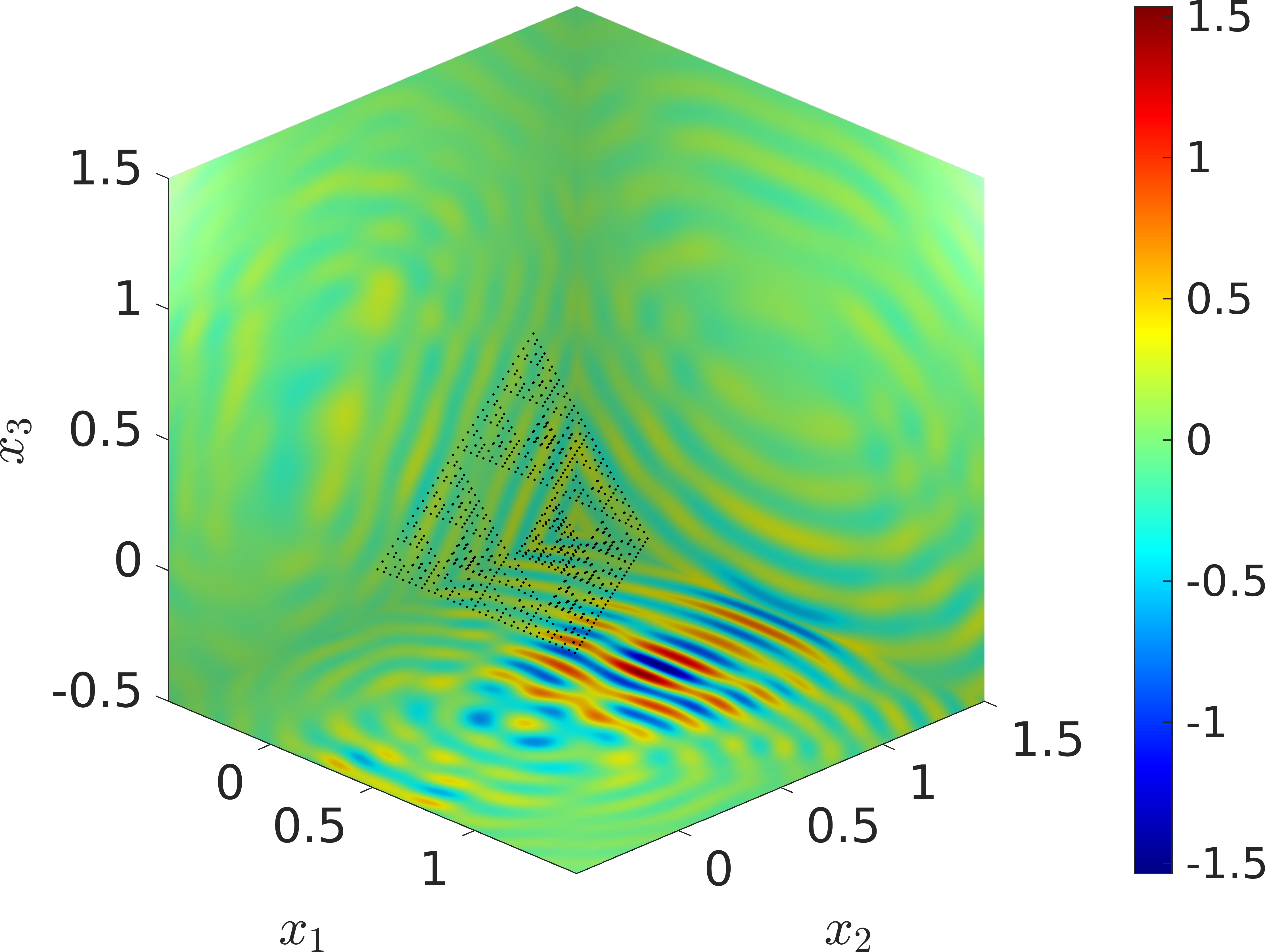}}
\hspace{3mm}
\subfl{$\rho=3/8$ and $d=\log(4)/\log(8/3)\approx 1.413$}{\includegraphics[width=.45\textwidth]
{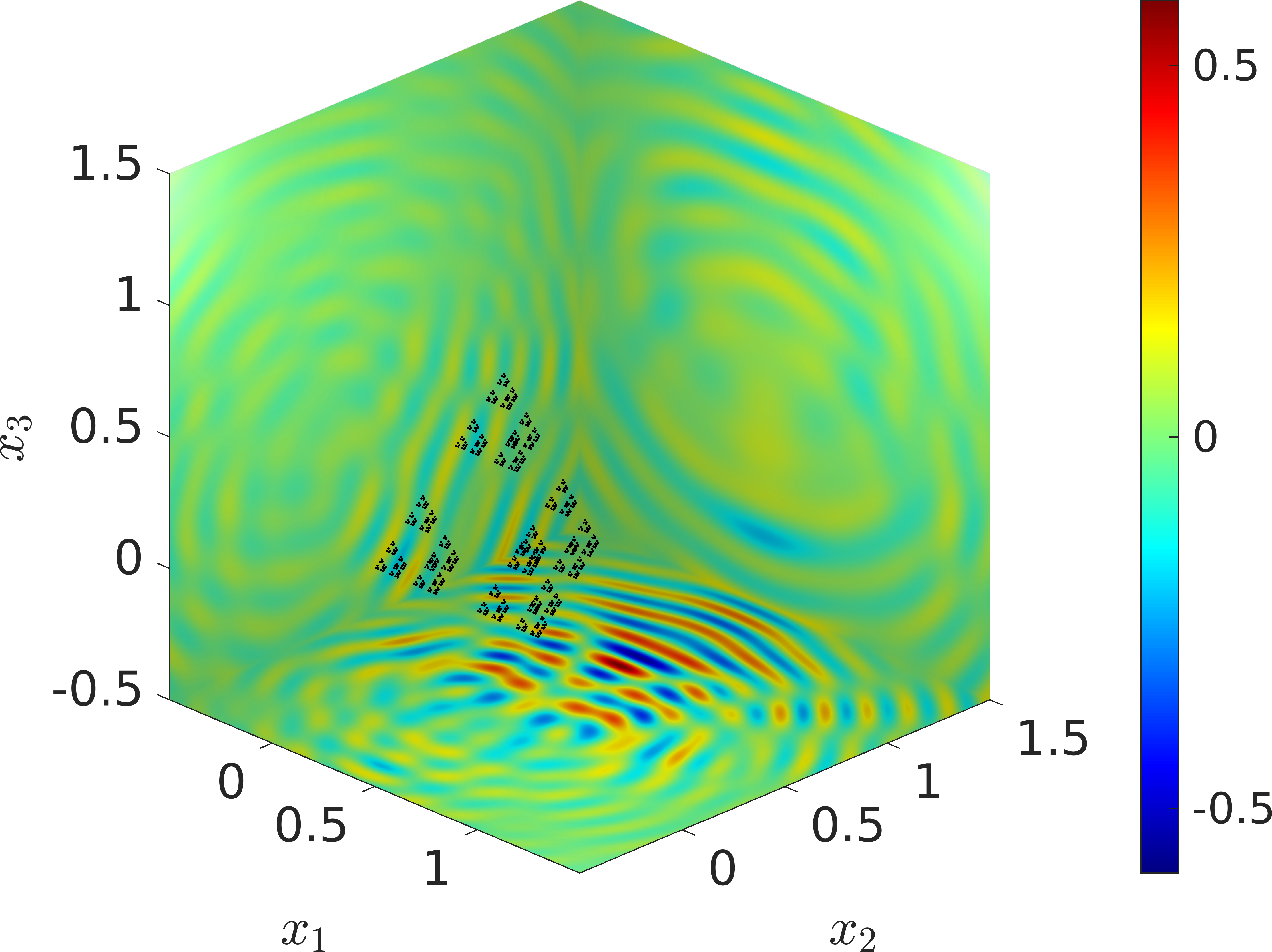}}
\caption{${\rm Re}(u_\ell)$ for scattering by the Sierpinski tetrahedra %
with $k=50$ and $\ell=7$. \rev{See \S\ref{sec:3D}.}
\label{fig:Tetrahedron}
}
\end{figure}

In Figure \ref{fig:Tetrahedron} we show the real parts of the scattered fields created by the two Sierpinski tetradehra of Figure \ref{fig:Tet}, which are attractors of the homogeneous IFS of Example \ref{ex:SierpinskiTetrahedron} with $M=4$ and $d=\log 4/\log(1/\rho)$.
The plane wave incidence direction is $\vartheta=(0, 1, -1)/\sqrt{2}$, $k=50$, and both approximations were computed with $\ell=7$, corresponding to $N=16384$.

\begin{figure}
\centering
\subfl{Far-field relative $L^\infty$ errors}{\includegraphics[height=48mm]{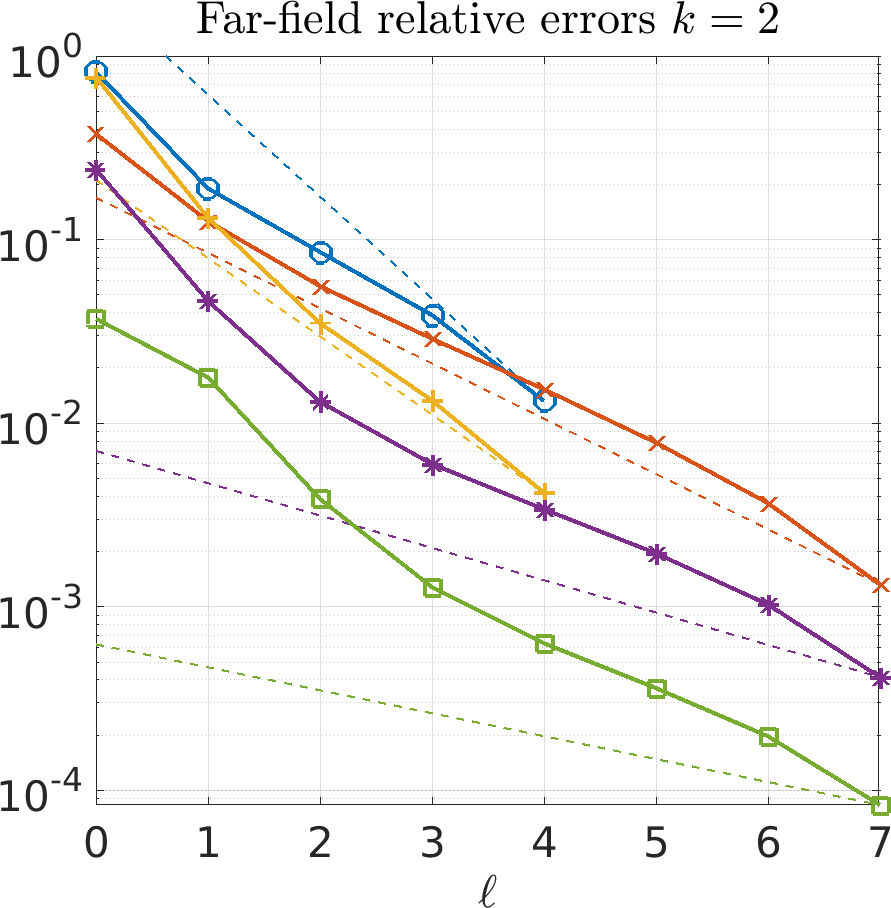}}
\hspace{2mm}
\subfl{Far-field $L^\infty$ absolute increment errors}{\includegraphics[height=48mm]{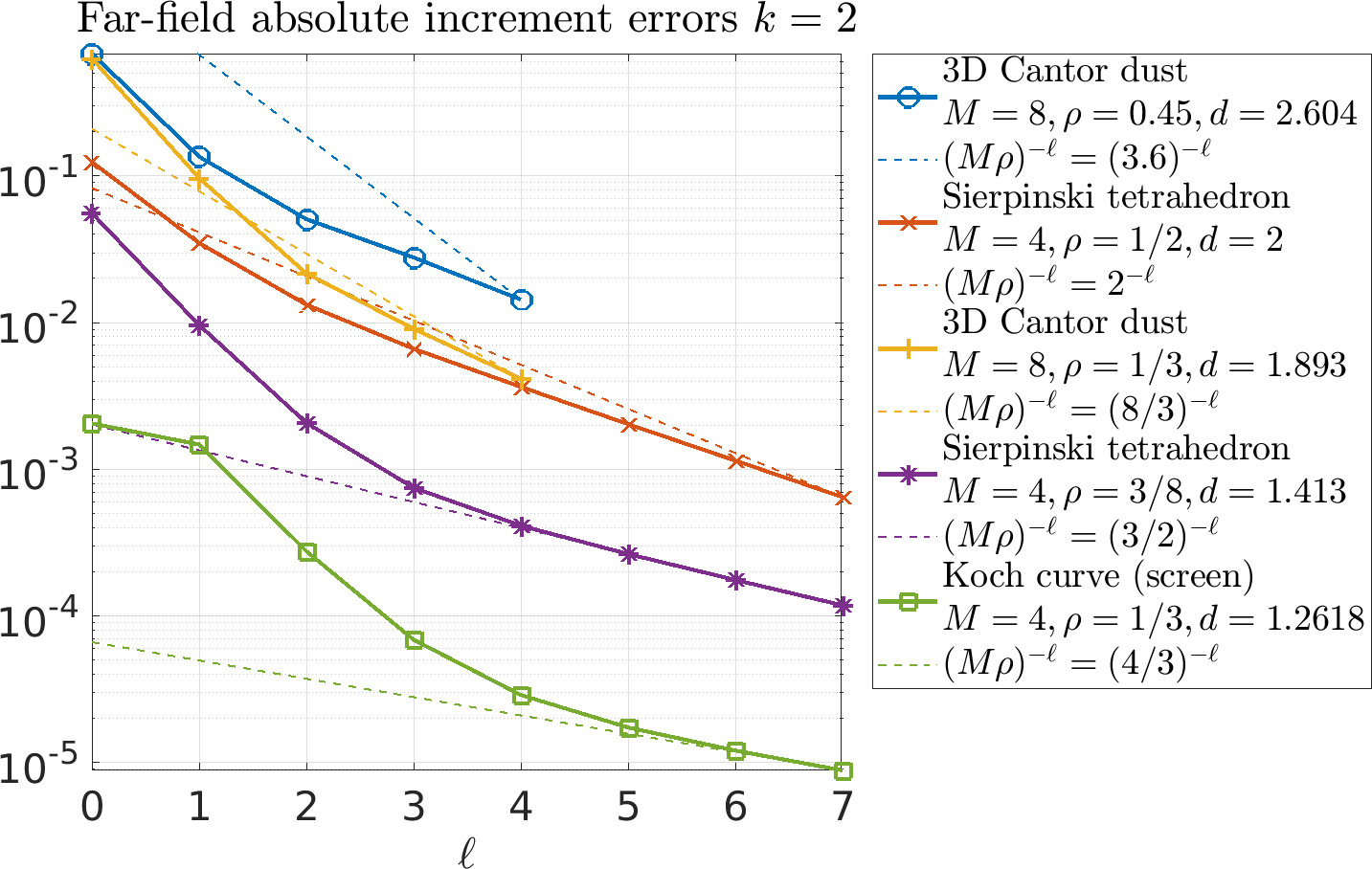}}
\caption{$L^\infty$ convergence plots for a range of 3D examples. \rev{See \S\ref{sec:3D}.}}
\label{fig:3DErrorPlots}
\end{figure}

In Figure \ref{fig:3DErrorPlots} we show $L^\infty$ far-field errors for the same incidence direction and $k=2$ for a range of 3D examples, namely: the Sierpinski tetrahedra of Figures \ref{fig:Tet} and \ref{fig:Tetrahedron} and Example \ref{ex:SierpinskiTetrahedron}; 3D Cantor Dusts, i.e., $C\times C\times C$ where $C$ is the Cantor set of Example \ref{ex:Cantor}, with $\rho=1/3$ and $\rho=0.45$; the Koch curve of Figure \ref{fig:KochCurveDecomposition} embedded in 3D space, i.e. $K\times \{0\}$, where $K\subset \R^2$ is the Koch curve of Example \ref{ex:Koch}. All these scatterers $\Gamma$ have $d=\dimH(\Gamma) > 1$ (see Figure \ref{fig:3DErrorPlots}) so that $H_\Gamma^{-1}$ is non-trivial by Remark \ref{rem:cap}, the Galerkin IEM is applicable, and the solution $\phi\in H_\Gamma^{-1}$ to the IE  \eqref{eq:IE} is non-zero (since \eqref{eq:IE} is equivalent to \eqref{eq:IEnew} and  $\tr g=-u^i|_\Gamma$ is non-zero), so also (by \eqref{eq:NewtonHelmholtz}) the scattered field $u=\cA\phi$ is non-zero.

To compute the discrete $L^\infty$ relative errors \eqref{eq:relerrs} shown in Figure \ref{fig:3DErrorPlots}(a) we sample at $200$ points on the sphere $\IS^2$, chosen so that the points form a uniform grid in spherical coordinate space $[0,\pi]\times[0,2\pi]$, and we use, for each scatterer, $\ellref=\ell_{\max}+1$, where $\ell_{\max}$ is the largest $\ell$ for which results are shown. This choice of $\ellref$, constrained by computational resources, is not large enough for $u^\infty_{\ellref}$ to be a sufficiently accurate ``exact'' solution to 
see convergence rates clearly. Thus we also plot in Figure \ref{fig:3DErrorPlots}(b) the absolute increment errors $\|u^\infty_\ell-u^\infty_{\ell+1}\|_{L^\infty}$ for $\ell=0,\ldots,\ellref-1$. 
As discussed in \cite[\S6.2]{HausdorffBEM}, if, for some $c>0$ and $0<\alpha<1$, $\|u^\infty_\ell-u^\infty_{\ell+1}\|_{L^\infty}=c\alpha^\ell$ for all $\ell\geq \ell_0$ then, by the triangle rule, $\|u^\infty-u^\infty_{\ell}\|_{L^\infty}\leq \frac{c}{1-\alpha}\, \alpha^\ell$ for $\ell\geq \ell_0$. Thus convergence rates can be deduced from Figure \ref{fig:3DErrorPlots}(b). 
By Remark \ref{rem:ConvRates}, which applies to all the examples except the Koch curve \rev{and the Sierpinski tetrahedron with $d=2$}, we expect,  
\rev{if Hypothesis \ref{conj} holds}, 
to see errors roughly proportional to $(M\rho)^{-\ell}$. This rate is observed in Figure \ref{fig:3DErrorPlots}(b) for sufficiently large $\ell$ for all the cases with $d<2$, but the convergence is significantly slower than $(M\rho)^{-\ell}$ for the example with $d\approx 2.6$. These results, and the convergence results reported in \S\ref{sec:2D}, suggest that 
\rev{Hypothesis \ref{conj}}  
does not hold in cases where $d'=\dimH(\partial \Gamma)>n-1$ (note $\partial \Gamma=\Gamma$ for the scatterers in Figures  \ref{fig:2DErrorPlots} and \ref{fig:3DErrorPlots}), but may hold in cases where $d'<n-1$. %
They suggest moreoever that 
\rev{Hypothesis \ref{conj}} 
and the estimates of Theorem  \ref{thm:IEMConvergence} may hold for the Koch curve screen in 3D, even though $\Gamma$ is non-disjoint in this case.

\appendix
\section{Singular quadrature on fractals}
\label{sec:QuadratureAppendix}

\noindent In this appendix we briefly outline the methodology of \cite{HausdorffQuadrature,NonDisjointQuad} for the derivation of representation formulas for singular integrals on fractals
in terms of regular integrals.
We assume throughout that $\Gamma$ is the attractor of an IFS satisfying the OSC.

The basic singular integral we consider is
\[ I_{\Gamma,\Gamma}:=\int_\Gamma\int_\Gamma \widetilde\Phi_t(|x-y|)\,\rd\cH^d(x)\rd\cH^d(y), \]
where
\begin{align*}
	\label{}
	\tilde\Phi_t(r):=
	\begin{cases}
		r^{-t}, & t>0,\\
		\log{r}, & t=0.
	\end{cases}
\end{align*}
The cases $t=0$ and $t=1$ are those relevant to \S4(b), %
since $\Phi_{\rm sing}(x,y) = -\tilde\Phi_0(|x-y|)/(2\pi)$ for $n=2$, and $\Phi_{\rm sing}(x,y) = \tilde\Phi_1(|x-y|)/(4\pi)$ for $n=3$,
but for completeness we consider the general case.
The integral $I_{\Gamma,\Gamma}$ is finite if and only if $t<d$, where $d$ is the Haudsorff dimension of $\Gamma$ (see, e.g., \cite[Cor.~A.2]{HausdorffQuadrature} or \cite[Cor.~2.3]{HausdorffBEM}), so we assume henceforth that $0\leq t<d$.

\subsection{Similarity}
\label{sec:Similarity}

The representation formulas and quadrature rules presented in \cite{HausdorffQuadrature,NonDisjointQuad} are based on decomposing $I_{\Gamma,\Gamma}$ as a sum of integrals over self-similar subsets of $\Gamma$.
For $\bm,\bmp\in \bigcup_{\ell=0}^\infty I_\ell$
we
define
\[ I_{\bm,\bmp}:=\int_{\Gamma_\bm}\int_{\Gamma _\bmp}\widetilde\Phi_t(|x-y|)\,\rd\cH^d(x)\rd\cH^d(y),
\]
which
is singular
when $\Gamma_\bm\cap\Gamma_\bmp$ is non-empty, and
regular
otherwise.
Key to the methodology of \cite{HausdorffQuadrature,NonDisjointQuad} is
that many of these integrals $I_{\bm,\bmp}$, for different choices of $\bm,\bmp$, can be related to each other using the self-similarity of $\Gamma$ and the homogeneity of $\tilde\Phi_t$, namely that, for $\rho>0$,
\begin{align}
	\label{eq:PhitProp}
	\tilde\Phi_t(\rho r)= \begin{cases}
		\rho^{-t}\tilde\Phi_t(r), & t>0,\\
		\log\rho + \tilde\Phi_t(r), & t=0.
	\end{cases}
\end{align}

The following result is a consequence of \cite[Props~3.2 \& 3.3, Rem.~2.1]{NonDisjointQuad}.
Here $|\bm|$ denotes the length of the vector index $\bm$, with $|0|$ interpreted as $0$ in the case $\Gamma_{\bm}=\Gamma_0=\Gamma$, and we define $\vartheta_t$ by $\vartheta_t:=0$ for $t>0$ and $\vartheta_t:=\cH^d(\Gamma)^2$ for $t=0$.
Adopting the terminology of \cite{NonDisjointQuad}, when the conditions of Proposition \ref{prop:Similarity} hold we say that the integrals $I_{\bm,\bm'}$ and $I_{\bn,\bn'}$ are \emph{similar}.
The
condition \eqref{eq:SimilarityCond} stipulates that there exist similarities mapping $\Gamma_\bm$ to $\Gamma_\bn$, and $\Gamma_\bmp$ to $\Gamma_\bnp$, defined in terms of the IFS and symmetry properties of $\Gamma$, that are compatible in a certain sense. 
We note that one can always take the isometries $T$ and $T'$ to be the identity map in the following.
\begin{prop}
	\label{prop:Similarity}
	Let $\bm,\bmp,\bn,\bnp\in \bigcup_{\ell=0}^\infty I_\ell$.
	Let $T$ and $T'$ be isometries of $\R^n$ such that $T(\Gamma)=T'(\Gamma)=\Gamma$.
	Suppose there exists $\varrho>0$ such that
	\begin{align}
		\label{eq:SimilarityCond}
		|s_{\bm}(T(s_{\bn}^{-1}(x)))-s_{\bmp}(T'(s_{\bnp}^{-1}(y)))| = \varrho|x-y|, \quad  x,y \in \R^n.
	\end{align}
	Then
	\begin{align}
		\label{eq:varrhoRelation}
		\varrho = \frac{\rho_\bm}{\rho_\bn}=
		\frac{\rho_\bmp}{\rho_\bnp}
	\end{align}
	and
	\begin{align}
		\label{eq:Proportional}
		I_{\bm,\bm'} =
		\varrho^{2d-t}I_{\bn,\bn'} + \vartheta_t(\rho_\bm\rho_\bmp)^d\log{\varrho}, \qquad t\in [0,d).
	\end{align}
	Furthermore, if $\Gamma$ is homogeneous then $|\bm|-|\bn|=|\bmp|-|\bnp|$, $\varrho = \rho^{|\bm|-|\bn|}$, and, noting that 
	$\rho^d = 1/M$, \eqref{eq:Proportional} can be written as
	\begin{align}
		\label{eq:ProportionalHomogeneous}
		I_{\bm,\bm'} =
		\frac{\rho^{-t(|\bm|-|\bn|)}}{M^{2(|\bm|-|\bn|)}}I_{\bn,\bn'} + \vartheta_t \frac{(|\bm|-|\bn|)}{M^{|\bm|+|\bmp|}}\log{\rho},
		\qquad t\in [0,d).
	\end{align}
\end{prop}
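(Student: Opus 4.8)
The plan is to exploit the change of variables induced by the similarities appearing in \eqref{eq:SimilarityCond} together with the scaling law \eqref{eq:PhitProp} for $\tilde\Phi_t$ and the self-similar structure of $\cH^d$. First I would record the elementary scaling property of Hausdorff measure under similarities: if $\varsigma:\R^n\to\R^n$ is a similarity with ratio $r>0$ then $\cH^d(\varsigma(E)) = r^d\,\cH^d(E)$ for every Borel $E$, and hence $\int_{\varsigma(E)} f\,\rd\cH^d = r^d\int_E f\circ\varsigma\,\rd\cH^d$. Since $s_\bm$ has ratio $\rho_\bm$, $s_\bn^{-1}$ has ratio $1/\rho_\bn$, and $T$ is an isometry (ratio $1$), the composite $\Sigma:=s_\bm\circ T\circ s_\bn^{-1}$ is a similarity of ratio $\rho_\bm/\rho_\bn$ mapping $\Gamma_\bn = s_\bn(\Gamma)$ onto $s_\bm(T(\Gamma)) = s_\bm(\Gamma) = \Gamma_\bm$ (using $T(\Gamma)=\Gamma$); similarly $\Sigma' := s_\bmp\circ T'\circ s_\bnp^{-1}$ is a similarity of ratio $\rho_\bmp/\rho_\bnp$ mapping $\Gamma_\bnp$ onto $\Gamma_\bmp$. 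The hypothesis \eqref{eq:SimilarityCond} reads $|\Sigma(x)-\Sigma'(y)| = \varrho|x-y|$ for all $x,y$; taking $y$ fixed and varying $x$ forces the ratio of $\Sigma$ to equal $\varrho$, and symmetrically the ratio of $\Sigma'$ equals $\varrho$, which gives \eqref{eq:varrhoRelation}.

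Next I would substitute $x\mapsto \Sigma(x)$, $y\mapsto \Sigma'(y)$ in the integral defining $I_{\bm,\bmp}$: using the measure scaling twice,
\begin{align*}
I_{\bm,\bmp} = \int_{\Gamma_\bm}\int_{\Gamma_\bmp}\tilde\Phi_t(|x-y|)\,\rd\cH^d(x)\,\rd\cH^d(y)
= (\rho_\bm/\rho_\bn)^d(\rho_\bmp/\rho_\bnp)^d\int_{\Gamma_\bn}\int_{\Gamma_\bnp}\tilde\Phi_t(|\Sigma(x)-\Sigma'(y)|)\,\rd\cH^d(x)\,\rd\cH^d(y),
\end{align*}
and by \eqref{eq:SimilarityCond} the argument of $\tilde\Phi_t$ is $\varrho|x-y|$. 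Now apply \eqref{eq:PhitProp}: for $t>0$ we get a factor $\varrho^{-t}$ pulled out, and since $(\rho_\bm/\rho_\bn)^d(\rho_\bmp/\rho_\bnp)^d = \varrho^{2d}$ by \eqref{eq:varrhoRelation}, the prefactor is $\varrho^{2d-t}$, yielding \eqref{eq:Proportional} with the log term absent (consistent with $\vartheta_t = 0$). For $t=0$, \eqref{eq:PhitProp} gives $\tilde\Phi_0(\varrho|x-y|) = \log\varrho + \tilde\Phi_0(|x-y|)$; the $\tilde\Phi_0(|x-y|)$ part reproduces $\varrho^{2d}I_{\bn,\bnp}$ and the constant $\log\varrho$ part integrates to $\varrho^{2d}\log\varrho\cdot\cH^d(\Gamma_\bn)\cH^d(\Gamma_\bnp)$. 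Using $\cH^d(\Gamma_\bn) = \rho_\bn^d\cH^d(\Gamma)$, $\cH^d(\Gamma_\bnp) = \rho_\bnp^d\cH^d(\Gamma)$ and $\varrho^{2d}\rho_\bn^d\rho_\bnp^d = (\rho_\bm\rho_\bmp)^d$ (again by \eqref{eq:varrhoRelation}), this equals $\cH^d(\Gamma)^2(\rho_\bm\rho_\bmp)^d\log\varrho = \vartheta_0(\rho_\bm\rho_\bmp)^d\log\varrho$, which is exactly \eqref{eq:Proportional} in the $t=0$ case. The two cases are unified by the definition of $\vartheta_t$.

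Finally, for the homogeneous case, all contraction ratios equal $\rho$, so $\rho_\bm = \rho^{|\bm|}$ etc., and \eqref{eq:varrhoRelation} gives $\rho^{|\bm|-|\bn|} = \varrho = \rho^{|\bmp|-|\bnp|}$, whence $|\bm|-|\bn| = |\bmp|-|\bnp|$ and $\varrho = \rho^{|\bm|-|\bn|}$. Substituting $\varrho = \rho^{|\bm|-|\bn|}$ and $\rho^d = 1/M$ into \eqref{eq:Proportional}: the prefactor $\varrho^{2d-t} = \rho^{(2d-t)(|\bm|-|\bn|)} = \rho^{-t(|\bm|-|\bn|)}M^{-2(|\bm|-|\bn|)}$, and the log term $\vartheta_t(\rho_\bm\rho_\bmp)^d\log\varrho = \vartheta_t\,\rho^{d(|\bm|+|\bmp|)}(|\bm|-|\bn|)\log\rho = \vartheta_t\,M^{-(|\bm|+|\bmp|)}(|\bm|-|\bn|)\log\rho$, giving \eqref{eq:ProportionalHomogeneous}. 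I do not expect a serious obstacle here: the only points requiring care are checking that the composites $\Sigma,\Sigma'$ are well-defined similarities with the claimed ratios and map the right sets (relying on $T(\Gamma)=T'(\Gamma)=\Gamma$ and $s_\bn(\Gamma)=\Gamma_\bn$), and tracking the bookkeeping of exponents of $\rho$ and $M$; the measure-scaling and homogeneity facts are standard and can be cited from \cite{Fal} or the references already invoked. Since the statement is explicitly attributed to \cite[Props~3.2 \& 3.3, Rem.~2.1]{NonDisjointQuad}, it would also be legitimate to simply quote those results, but the self-contained argument above is short enough to include.
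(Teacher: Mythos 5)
Your proposal is correct, and it supplies the proof that the paper itself omits: in the paper this proposition is simply attributed to \cite[Props~3.2 \& 3.3, Rem.~2.1]{NonDisjointQuad} with no argument given, whereas you reconstruct the natural direct proof via the change of variables $x\mapsto\Sigma(x)$, $y\mapsto\Sigma'(y)$, the scaling $\cH^d(\varsigma(E))=r^d\cH^d(E)$, and the homogeneity law \eqref{eq:PhitProp}. All the exponent bookkeeping checks out, including the identity $\varrho^{2d}\rho_\bn^d\rho_\bnp^d=(\rho_\bm\rho_\bmp)^d$ that produces the coefficient of the logarithmic term, and the homogeneous-case reduction using $\rho^d=1/M$. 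The only step worth tightening is the deduction of \eqref{eq:varrhoRelation} from \eqref{eq:SimilarityCond}: as stated, ``taking $y$ fixed and varying $x$'' compares $\Sigma(x)$ with $\Sigma'(y)$, not with $\Sigma(y)$. One should first set $x=y$ in \eqref{eq:SimilarityCond} to conclude $\Sigma=\Sigma'$ pointwise, after which $|\Sigma(x_1)-\Sigma(x_2)|=|\Sigma(x_1)-\Sigma'(x_2)|=\varrho|x_1-x_2|$ shows that the common map is a similarity of ratio $\varrho$, and \eqref{eq:varrhoRelation} follows by comparing with the ratios $\rho_\bm/\rho_\bn$ and $\rho_\bmp/\rho_\bnp$ of $\Sigma$ and $\Sigma'$. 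This is a presentational gap rather than a mathematical one; with that sentence added, your self-contained argument is a legitimate alternative to citing \cite{NonDisjointQuad}.
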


The methodology of \cite{HausdorffQuadrature,NonDisjointQuad} involves attempting to
\begin{itemize}
	\item[(i)] identify a finite number $n_s$ of ``fundamental'' singular integrals \[I_{\bm_{s,1},\bmp_{s,1}}, \ldots, I_{\bm_{s,n_s},\bmp_{s,n_s}},\] with $I_{\bm_{s,1},\bmp_{s,1}}=I_{\Gamma,\Gamma}$, such that any other singular integral $I_{\bm,\bmp}$, for $\bm,\bmp\in L_h$, is similar to one of them in the sense of Proposition \ref{prop:Similarity} (via suitable $T,T'$);
	\item[(ii)] derive an $n_s$-by-$n_s$ linear system of equations satisfied by these fundamental singular integrals, that can be solved to express them (and hence any similar singular integral) in terms of regular integrals that can be computed numerically (e.g.\ using the composite barycentre rule).
\end{itemize}

An algorithm for attempting to achieve (i) and (ii) in the general case is presented in \cite[Algorithm 1]{NonDisjointQuad}, and the earlier results in \cite[\S4.3]{HausdorffQuadrature} can be viewed as a specialisation of this algorithm to the case of a disjoint attractor, for which $n_s=1$.
The basic idea is to start from $I_{\Gamma,\Gamma}$ and combine repeated subdivision of the integration domain with repeated applications of Proposition \ref{prop:Similarity} to determine when integrals are similar.
Whether the algorithm succeeds in achieving (i) and (ii) depends on the example being studied, but in \cite[\S5]{NonDisjointQuad} it was shown to succeed for a number of well-known examples of non-disjoint attractors including the Sierpinski triangle, Vicsek fractal, Sierpinski carpet and the Koch snowflake. Rather than repeating the full details of the algorithm here, we instead exemplify the procedure in the case of the Koch curve (see Fig.~1(f)), which was not studied in \cite{NonDisjointQuad}.

\subsection{Example - Koch curve}

The Koch curve (see Ex.~2.3, Fig.~1(f) and  Fig.~3)   %
$\Gamma\subset\R^2$ is the attractor of a  homogeneous IFS with $M=4$ and $\rho=1/3$, so that $d = \log{4}/\log{3}$ and $\rho^d=1/4$. The only isometries of $\R^2$ under which $\Gamma$ is invariant are the identity, and reflection in the line $x=1/2$.

	To make the notation more compact, given $\bm = (m_1,m_2,\ldots,m_\ell)$ and $\bmp = (m_1',m_2',\ldots,m_{\ell'}')$ we write $\Gamma_\bm$, $\Gamma_\bmp$ and $I_{\bm,\bmp}$ as $\Gamma_{m_1m_2\ldots m_\ell}$, $\Gamma_{m_1'm_2'\ldots m_{\ell'}'}$ and $I_{m_1m_2\ldots m_\ell,m_1'm_2'\ldots m_{\ell'}'}$, respectively. For example, we write $I_{(1,3),(2,4)}$ as $I_{13,24}$. This compact notation is unambiguous because $M<10$, which ensures that each entry in $\bm $ and $\bmp$ is a single-digit integer.
	
	Using Proposition \ref{prop:Similarity} (with $\varrho=1$ and appropriate choices of $T$ and $T'$) one can check that
	$I_{1,1}=I_{2,2}=I_{3,3}=I_{4,4}$, $I_{1,2}=I_{3,4}$, $I_{1,3}=I_{2,4}$, and $I_{i,j}=I_{j,i}$ for all $i,j\in\{1,\ldots,4\}$.
	Hence a level 1 decomposition of $I_{\Gamma,\Gamma}$ (illustrated in Fig.~3(a)) %
	gives
	\begin{align}
		\label{eq:KochDecomp1}
		I_{\Gamma,\Gamma} = \sum_{i=1}^4 \sum_{j=1}^4 I_{i,j} = 4I_{1,1} + 4I_{1,2} + 2I_{2,3} + R_{\Gamma,\Gamma},
	\end{align}
	where $R_{\Gamma,\Gamma}$ is a sum of regular integrals, given by
	\[ R_{\Gamma,\Gamma}:=4I_{1,3} + 2I_{1,4}.\]
	The integral $I_{1,1}$ is singular, but by Proposition \ref{prop:Similarity} is similar to $I_{\Gamma,\Gamma} = I_{0,0}$, with
	\begin{align}
		\label{eq:KochDecomp2}
		I_{1,1} = \frac{3^{t}}{16}I_{\Gamma,\Gamma} - \vartheta_t\frac{\log{3}}{16}.
	\end{align}
	The integrals $I_{1,2}$ and $I_{2,3}$ are also singular, since $\Gamma_1$ and $\Gamma_2$ intersect at a point, as do $\Gamma_2$ and $\Gamma_3$. But they are not similar to $I_{\Gamma,\Gamma}$, so we apply a level 2 decomposition (see  Fig.~3(b)),  %
	writing
	\begin{align}
		\label{eq:KochDecomp3}
		I_{1,2} = I_{14,21}+ R_{1,2} \qquad\text{and}\qquad I_{2,3} = I_{24,31}+ R_{2,3},
	\end{align}
	where $R_{1,2}$ and $R_{2,3}$ are both sums of regular integrals, given by
	\[R_{1,2} = 2I_{11,21} + 2I_{11,22} + 2I_{11,23} + I_{11,24} + 2I_{12,21} + 2I_{12,22} + I_{12,23} + 2I_{13,21} + I_{13,22},\]
	\[R_{2,3} = 4I_{21,31} + 2I_{21,32} + 2I_{21,33} + 2I_{21,34} + 2I_{22,31} + 2I_{23,31} + I_{23,32}.\]
	The integrals $I_{14,21}$ and $I_{24,31}$ are both singular, but are similar to $I_{1,2}$ and $I_{2,3}$, with
	\begin{align}
		\label{eq:KochDecomp4}
		I_{14,21} = \frac{3^{t}}{16}I_{1,2} - \vartheta_t\frac{\log{3}}{256}, \qquad\text{and}\qquad I_{24,31} = \frac{3^{t}}{16}I_{2,3} - \vartheta_t\frac{\log{3}}{256}.
	\end{align}
	Combining \eqref{eq:KochDecomp1}-\eqref{eq:KochDecomp4},  we find that the vector of fundamental singular integrals $(I_{\Gamma,\Gamma},I_{1,2},I_{2,3})^T$ satisfies the linear system
	\begin{align}
		\label{}
		\left(\begin{array}{ccc}
			\sigma_1 & -4 & -2 \\
			0 & \sigma_2 & 0 \\
			0 & 0 & \sigma_2
		\end{array}\right)
		\left(\begin{array}{c}
			I_{\Gamma,\Gamma}\\
			I_{1,2}\\
			I_{2,3}
		\end{array}\right)
		=
		\left(\begin{array}{c}
			R_{\Gamma,\Gamma} - \vartheta_t \frac{\log{3}}{4}\\
			R_{1,2}- \vartheta_t \frac{\log{3}}{256}\\
			R_{2,3}- \vartheta_t \frac{\log{3}}{256}
		\end{array}\right), \qquad t\in [0,d),
	\end{align}
	where
	\[\sigma_1 = 1-\frac{3^t}{4},
	\quad \sigma_2 = 1-\frac{3^t}{16}.\]
		Solving the system gives
		\begin{align}
			\label{eq:IGammaGamma_Koch_final1}
			I_{2,3} = \frac{1}{\sigma_2}\Big(R_{2,3} -  \vartheta_t \frac{\log{3}}{256}\Big),
			\qquad I_{1,2} = \frac{1}{\sigma_2}\Big(R_{1,2} - \vartheta_t \frac{\log{3}}{256}\Big),
		\end{align}
		and
		\begin{align}
			\label{eq:IGammaGamma_Koch_final2}
			I_{\Gamma,\Gamma} = \frac{1}{\sigma_1}\left(R_{\Gamma,\Gamma} + \frac{2}{\sigma_2}\Big(2R_{1,2}+R_{2,3}\Big) -  \vartheta_t \Big(32+\frac{3}{\sigma_2}\Big)\frac{\log{3}}{128}\right).
		\end{align}
		
		Having derived the representation formulas \eqref{eq:IGammaGamma_Koch_final1}-\eqref{eq:IGammaGamma_Koch_final2}, one can obtain numerical approximations of $I_{\Gamma,\Gamma}$, $I_{1,2}$ and $I_{2,3}$ by combining these with numerical evaluations of
		$R_{\Gamma,\Gamma}$, $R_{1,2}$ and $R_{2,3}$, e.g.\ using the composite barycentre rule with some maximum mesh width $\tilde{h}>0$.
		
		To apply these results in the assembly of the Galerkin matrix considered in  \S4(b),   %
		we note that since we are using the approximation space $V_N=Y_h$ on the mesh $L_h$ of $\Gamma$, any singular instance of   (4.17) %
		will be similar to one of $I_{\Gamma,\Gamma}$, $I_{1,2}$ and $I_{2,3}$, say $I_{i,j}$.
		To evaluate  (4.17)
		we apply Proposition \ref{prop:Similarity} to obtain a formula for  (4.17)
		in terms of $I_{i,j}$, which can be evaluated using the value for $I_{i,j}$ already computed (as discussed above).
		For instance, referring back to  Fig.~3(b)
		with $\bm = (1,1,3)$ and $\bmp = (1,1,4)$ the integral  (4.17)
		is similar to $I_{1,2}$, with
		\[ I_{113,114} = \frac{3^{2t}}{256}I_{1,2} - \vartheta_t \frac{\log{3}}{2048}.\]
		To ensure parity between the quadrature accuracies for such singular instances of  (4.17),
		and the regular instances of  (4.17),
		which were computed using the composite barycentre rule with maximum mesh width $h_Q$, we take $\tilde{h}=h_Q/h$.
		
		\bigskip
		\small
		
		\subsubsection*{Acknowledgements}
		{SC-W was supported by EPSRC grant EP/V007866/1, DH and AG by EPSRC grants EP/S01375X/1 and EP/V053868/1, AM by the PRIN project ``NA-FROM-PDEs''
			and by PNRR-M4C2-I1.4-NC-HPC-Spoke6, funded by the European Union - Next Generation EU, and AC by CIDMA (Center for Research and Development in Mathematics and Applications) and FCT (Foundation for Science and Technology) within project UIDB/04106/2020 
			(\href{https://doi.org/10.54499/UIDB/04106/2020}{doi.org/10.54499/UIDB/04106/2020}). AG, SC-W, DH and AM thank the Isaac Newton Institute for Mathematical Sciences for support and hospitality during the programme ``Mathematical Theory and Applications of Multiple Wave Scattering'', 
			supported by EPSRC grant EP/R014604/1. AG acknowledges use of the UCL Myriad High Performance Computing Facility (Myriad@UCL) and associated support services. 
			We thank the reviewers for their many helpful comments.}
		\bibliography{BEMbib_short2014}%

\begin{thebibliography}{10}

\bibitem{DLMF}
{\em {NIST Digital Library of Mathematical Functions}}.
\newblock \href{http://dlmf.nist.gov}{dlmf.nist.gov}, r1.1.12 of 2023-12-15.

\bibitem{ImpedanceScreen}
{\sc J.~Bannister, A.~Gibbs, and D.~P. Hewett}, {\em {Acoustic scattering by
  impedance screens/cracks with fractal boundary: well-posedness analysis and
  boundary element approximation}}, Math. Mod. Meth. Appl. Sci. (M3AS), 32
  (2022), pp.~291--319.

\bibitem{caetano2019density}
{\sc A.~Caetano, D.~P. Hewett, and A.~Moiola}, {\em {Density results for
  Sobolev, Besov and Triebel-Lizorkin spaces on rough sets}}, J. Funct. Anal.,
  281 (2021), p.~109019.

\bibitem{CaChGiHe23}
{\sc A.~M. Caetano, S.~N. Chandler-Wilde, A.~Gibbs, and D.~P. Hewett}, {\em
  Properties of {IFS} attractors with non-empty interiors and associated
  function spaces and scattering problems}, In preparation.

\bibitem{HausdorffBEM}
{\sc A.~M. Caetano, S.~N. Chandler-Wilde, A.~Gibbs, D.~P. Hewett, and
  A.~Moiola}, {\em {A Hausdorff-measure boundary element method for acoustic
  scattering by fractal screens}}, Numer. Math.,
  \href{https://doi.org/10.1007/s00211-024-01399-7}{doi.org/10.1007/s00211-024-01399-7},
   (2024).

\bibitem{cefalo2014optimal}
{\sc M.~Cefalo and M.~R. Lancia}, {\em An optimal mesh generation algorithm for
  domains with koch type boundaries}, Math. Comput. Simulat., 106 (2014),
  pp.~133--162.

\bibitem{ChGrLaSp:11}
{\sc S.~N. Chandler-Wilde, I.~G. Graham, S.~Langdon, and E.~A. Spence}, {\em
  Numerical-asymptotic boundary integral methods in high-frequency acoustic
  scattering}, Acta Numer., 21 (2012), pp.~89--305.

\bibitem{CoercScreen2}
{\sc S.~N. Chandler-Wilde and D.~P. Hewett}, {\em Wavenumber-explicit
  continuity and coercivity estimates in acoustic scattering by planar
  screens}, Integr. Equat. Oper. Th., 82 (2015), pp.~423--449.

\bibitem{ScreenPaper}
\leavevmode\vrule height 2pt depth -1.6pt width 23pt, {\em Well-posed {PDE} and
  integral equation formulations for scattering by fractal screens}, SIAM J.
  Math. Anal., 50 (2018), pp.~677--717.

\bibitem{ChaHewMoi:13}
{\sc S.~N. Chandler-Wilde, D.~P. Hewett, and A.~Moiola}, {\em {Sobolev spaces
  on non-Lipschitz subsets of $\mathbb{R}^n$ with application to boundary
  integral equations on fractal screens}}, Integr. Equat. Operat. Th., 87
  (2017), pp.~179--224.

\bibitem{BEMfract}
{\sc S.~N. Chandler-Wilde, D.~P. Hewett, A.~Moiola, and J.~Besson}, {\em
  Boundary element methods for acoustic scattering by fractal screens}, Numer.
  Math., 147 (2021), pp.~785--837.

\bibitem{WavenumberExplicit}
{\sc S.~N. Chandler-Wilde and P.~Monk}, {\em Wave-number-explicit bounds in
  time-harmonic scattering}, SIAM J. Math. Anal., 39 (2008), pp.~1428--1455.

\bibitem{ClHi:13}
{\sc X.~Claeys and R.~Hiptmair}, {\em Integral equations on multi-screens},
  Integr. Equat. Oper. Th., 77 (2013), pp.~167--197.

\bibitem{Evans2010}
{\sc L.~C. Evans}, {\em Partial Differential Equations}, AMS, 2010.

\bibitem{EvansGariepy}
{\sc L.~C. Evans and R.~E. Gariepy}, {\em {Measure Theory and Fine Properties
  of Functions}}, CRC Press, 2015.

\bibitem{Fal}
{\sc K.~Falconer}, {\em Fractal Geometry: Mathematical Foundations and
  Applications}, Wiley, 3rd ed., 2014.

\bibitem{NonDisjointQuad}
{\sc A.~Gibbs, D.~P. Hewett, and B.~Major}, {\em Numerical evaluation of
  singular integrals on non-disjoint self-similar fractal sets}, Numer. Alg.,
  \href{https://doi.org/10.1007/s11075-023-01705-8}{doi.org/10.1007/s11075-023-01705-8},
   (2024).

\bibitem{HausdorffQuadrature}
{\sc A.~Gibbs, D.~P. Hewett, and A.~Moiola}, {\em {Numerical evaluation of
  singular integrals on fractal sets}}, Numer. Alg., 92 (2023), pp.~2071--2124.

\bibitem{Ha-Du:90}
{\sc T.~Ha-Duong}, {\em On the transient acoustic scattering by a flat object},
  Japan J. Indust. Appl. Math., 7 (1990), pp.~489--513.

\bibitem{Ha-Du:92}
\leavevmode\vrule height 2pt depth -1.6pt width 23pt, {\em On the boundary
  integral equations for the crack opening displacement of flat cracks},
  Integr. Equat. Oper. Th., 15 (1992), pp.~427--453.

\bibitem{HewMoi:15}
{\sc D.~P. Hewett and A.~Moiola}, {\em On the maximal {S}obolev regularity of
  distributions supported by subsets of {E}uclidean space}, Anal. Appl., 15
  (2017), pp.~731--770.

\bibitem{jones1994fast}
{\sc P.~Jones, J.~Ma, and V.~Rokhlin}, {\em {A fast direct algorithm for the
  solution of the Laplace equation on regions with fractal boundaries}}, J.
  Comput. Phys., 113 (1994), pp.~35--51.

\bibitem{Jonsson98}
{\sc A.~Jonsson}, {\em {Wavelets on fractals and Besov spaces}}, J. Fourier
  Anal. Appl., 4 (1998), pp.~329--340.

\bibitem{JoWa84}
{\sc A.~Jonsson and H.~Wallin}, {\em {Function Spaces on Subsets of ${\mathbb
  R}^n$}}, Harwood Academic Publishers, 1984.

\bibitem{KhMe:03}
{\sc B.~N. Khoromskij and J.~M. Melenk}, {\em {Boundary concentrated finite
  element methods}}, SIAM J. Numer. Anal., 41 (2003), pp.~1--36.

\bibitem{kral1980integral}
{\sc J.~Kr{\'a}l}, {\em Integral operators in potential theory}, Springer,
  1980.

\bibitem{Mattila95}
{\sc P.~Mattila}, {\em Geometry of Sets and Measures in {E}uclidean Spaces:
  Fractals and Rectifiability}, CUP, 1995.

\bibitem{Maz'ya}
{\sc V.~G. Maz'ya}, {\em Sobolev Spaces with Applications to Elliptic Partial
  Differential Equations}, Springer,, 2nd~ed., 2011.

\bibitem{McLean}
{\sc W.~McLean}, {\em Strongly Elliptic Systems and Boundary Integral
  Equations}, CUP, 2000.

\bibitem{mitrea1999boundary}
{\sc M.~Mitrea and M.~Taylor}, {\em {Boundary layer methods for Lipschitz
  domains in Riemannian manifolds}}, J. Funct. Anal., 163 (1999), pp.~181--251.

\bibitem{panagouli1997fem}
{\sc P.~Panagiotopoulos and O.~Panagouli}, {\em {The FEM and BEM for fractal
  boundaries and interfaces. Applications to unilateral problems}}, Comput.
  Struct., 64 (1997), pp.~329--339.

\bibitem{RiddleWebSite}
{\sc L.~Riddle}, {\em \!. classic iterated function systems}.
\newblock
  \href{https://larryriddle.agnesscott.org/ifs/ifs.htm}{larryriddle.agnesscott.org/ifs/ifs.htm},
  downloaded 18 July 2023.

\bibitem{sauter-schwab11}
{\sc S.~A. Sauter and C.~Schwab}, {\em Boundary Element Methods}, Springer,
  2011.

\bibitem{Schenck:68}
{\sc H.~A. Schenck}, {\em Improved integral formulation for acoustic radiation
  problems}, J. Acoust. Soc. Am., 44 (1968), pp.~41--58.

\bibitem{vsneiberg1974spectral}
{\sc I.~J. {\v{S}}ne{\i}berg}, {\em {Spectral properties of linear operators in
  interpolation families of Banach spaces}}, Mat. Issled, 9 (1974),
  pp.~214--229.

\bibitem{Triebel97FracSpec}
{\sc H.~Triebel}, {\em Fractals and Spectra}, Birkh{\"a}user, 1997.

\bibitem{WuSe:91}
{\sc T.~W. Wu and A.~F. Seybert}, {\em A weighted residual formulation for the
  {CHIEF} method in acoustics}, J. Acoust. Soc. Am., 90 (1991), pp.~1608--1614.

\end{thebibliography}
		\bibliographystyle{siam}
		
\end{document}